
\documentclass[reqno, 12pt, oneside]{amsart}

\usepackage{mathtools, bm}
\usepackage[all]{xy}
\usepackage{xcolor}
\usepackage{MnSymbol}
\usepackage{microtype}

\DeclareFontFamily{OT1}{pzc}{}
\DeclareFontShape{OT1}{pzc}{m}{it}{<-> s * [1.200] pzcmi7t}{}
\DeclareMathAlphabet{\mathpzc}{OT1}{pzc}{m}{it}



\usepackage{hyperref}

\hypersetup{colorlinks}
\definecolor{darkred}{rgb}{0.5,0,0}
\definecolor{darkgreen}{rgb}{0,0.5,0}
\definecolor{darkblue}{rgb}{0,0,0.5}

\makeatletter 
\@addtoreset{equation}{section}
\makeatother  

\numberwithin{equation}{section}


\usepackage{tikz-cd}
\usepackage{tikz}
\usetikzlibrary{arrows}
\usepackage{float}

\newcommand{\mb}{\mathbb}
\newcommand{\mf}{\mathfrak}
\newcommand{\mc}{\mathcal}

\renewcommand{\i}{{\bf i}}
\newcommand{\Gammait}{{\mathit{\Gamma}}}
\newcommand{\Piit}{{\mathit{\Pi}}}

\newcommand{\ov}{\overline}

\newcommand{\mz}{\mathpzc}
\DeclareMathAlphabet{\mathbbmsl}{U}{bbm}{m}{sl}

\newcommand{\beq}{\begin{equation}}
\newcommand{\eeq}{\end{equation}}
\newcommand{\beqn}{\begin{equation*}}
\newcommand{\eeqn}{\end{equation*}}

\newcommand{\uds}[1]{\underline{\smash{#1}}}

\renewcommand{\setminus}{\smallsetminus}

\newcommand{\wh}{\widehat}

\newcommand{\preq}{\preccurlyeq}

\usepackage[left= 3 cm, right= 3 cm, top= 2.7 cm, bottom=2.9 cm, foot= 1.2 cm, marginparwidth=2.3 cm, marginparsep=0.3 cm]{geometry}

\author[Tian]{Gang Tian}
\address{
Beijing International Center for Mathematical Research\\
Peking University\\
Beijing, China\\
and\\
Department of Mathematics \\
Princeton University\\
Fine Hall, Washington Road\\
Princeton, NJ 08544 USA
}
\email{tian@math.princeton.edu}

\author[Xu]{Guangbo Xu}
\address{
Department of Mathematics \\
Princeton University\\
Fine Hall, Washington Road\\
Princeton, NJ 08544 USA
}
\email{guangbox@math.princeton.edu}

\date{\today}


\newtheorem{thm}{Theorem}[section]
\newtheorem{lemma}[thm]{Lemma}
\newtheorem{cor}[thm]{Corollary}
\newtheorem{prop}[thm]{Proposition}

\theoremstyle{definition}
\newtheorem{defn}[thm]{Definition}
\newtheorem{hyp}[thm]{Hypothesis}
	
\theoremstyle{remark}
\newtheorem{rem}[thm]{Remark}

\newtheorem{notation}[thm]{Notation}

\title[Virtual cycles of gauged Witten equation]{Virtual cycles of gauged Witten equation}

\begin{document}

\thanks{G. T. is supported by DMS-1607091 and a grant from NSFC. G. X. is supported by AMS-Simons Travel Grant.}

\begin{abstract}
We construct virtual cycles on moduli spaces of perturbed gauged Witten equation over a fixed smooth $r$-spin curve, under the framework of \cite{Tian_Xu}. Together with the wall-crossing formula proved in the companion paper \cite{Tian_Xu_4}, it completes the construction of the correlation function for the gauged linear $\sigma$-model announced in \cite{Tian_Xu_2} as well as the proof of its invariance.
\end{abstract}

\maketitle

\setcounter{tocdepth}{1}
\tableofcontents 

\section{Introduction}

This paper, together with a companion paper \cite{Tian_Xu_4}, is the third input in a series of papers aiming at a mathematically rigorous theory of the gauged linear $\sigma$-model (GLSM) as Witten proposed in \cite{Witten_LGCY}, following the two papers \cite{Tian_Xu, Tian_Xu_2}. GLSM provides a fundamental framework in many studies of mathematics and physics related to string theory; it is of crucial importance in Hori--Vafa's physical proof of mirror symmetry \cite{Hori_Vafa}. It is also closely related to Gromov--Witten theory. However, despite many efforts, a mathematical foundation of GLSM has not been completely established\footnote{Here we mean the ${\mc N} = (2, 2)$, A-model closed-string theory, with a nontrivial superpotential.} and is needed for many important applications, for example, a mathematical proof of the Landau--Ginzburg/Calabi--Yau correspondence, and a geometric understanding of mirror symmetry.

In \cite{Tian_Xu, Tian_Xu_2}, we initiated our project on constructing a mathematical theory of GLSM. The central objects are the so-called gauged Witten equation and its moduli space. The gauged Witten equation is a system combining both the Witten equation, which was used to construct the A-side theory of orbifold Landau--Ginzburg models \cite{FJR1, FJR2, FJR3}, and the symplectic vortex equation, which was used to construct the Hamiltonian--Gromov--Witten invariants \cite{Mundet_thesis, Mundet_2003}, \cite{Cieliebak_Gaio_Salamon_2000} and \cite{Mundet_Tian_2009}. Roughly speaking, given a noncompact K\"ahler manifold $X$, an action on $X$ by a reductive Lie group $G^{\mb C}$ with a moment map $\mu: X \to {\mf g}$, and a holomorphic function $W: X \to {\mb C}$ which is homogeneous with respect to the $G^{\mb C}$-action, the gauged Witten equation reads
\beq\label{eqn11}
\left\{ \begin{aligned} \ov\partial_A u + \nabla W (u) &\ = 0,\\
                       * F_A + \mu(u) &\ = 0,
\end{aligned}
\right.
\eeq
where $A$ is a connection on a $G$-bundle $P$ over a Riemann surface $\Sigma$, and $u$ is a section $u: P \to X$. 

The moduli space of gauge equivalence classes of solutions contains the information of GLSM that can be extracted mathematically. In \cite{Tian_Xu} we have obtained crucial analytical results about the gauged Witten equation and its moduli space. These results will be recalled in due course in the main body of this paper.

As in Gromov--Witten theory, the correlation function of GLSM is the most important numerical output, which leads to many interesting algebraic structures. The (symplectic) Gromov--Witten invariants, which can naively be interpreted as ``curve counting,'' were constructed on any symplectic manifolds (cf. \cite{Li_Tian}, \cite{Fukaya_Ono}, also see \cite{Ruan_Tian}, \cite{Ruan_Tian_97} on semi-positive symplectic manifolds). One major difficulty of the construction is the lack of transversality, namely, the moduli spaces of holomorphic curves may not be smooth or of expected dimensions. One has to perturb the equation in certain way to obtain well-defined countings. This is a highly non-trivial problem, which becomes more sophisticated when involved with maps defined on singular domains and nontrivial automorphism groups.

The method of constructing a system of consistent perturbations for the stratified moduli space and extracting topological information (the virtual cycle) is often  called the virtual technique. After \cite{Li_Tian} and \cite{Fukaya_Ono}, different versions of the virtual technique have been developed (in the non-algebraic case). We refer the readers to \cite{Liu_Tian_Floer}, \cite{HWZ1, HWZ2, HWZ3}, \cite{Chen_Li_Wang_1}, \cite{Joyce_07, Joyce_14}, \cite{Pardon_virtual}, \cite{MW_1, MW_2, MW_3}, \cite{FOOO_2012, FOOO_2016, FOOO_2015, FOOO_2017} for more recent developments. 

In \cite{Tian_Xu_2} we outlined the definition of the correlation function assuming the existence and good properties of a virtual cycle. In this paper we give the details of the virtual cycle construction; in the companion paper \cite{Tian_Xu_4} we prove an important property of the virtual cycles. Here we briefly explain our results. Let $X$ be a K\"ahler manifold and $Q: X \to {\mb C}$ be a holomorphic function. Suppose there is a ${\mb C}^*$-action on $X$ making $Q$ homogeneous. Let $\tilde{X} = {\mb C} \times X$ have an induced action by $G = T^2$. Let $W: \tilde{X} \to {\mb C}$ be $W (p, x) = pQ(x)$. There is also a moment map $\mu: \tilde{X} \to {\bf Lie} G$. Then one can write down the gauged Witten equation \eqref{eqn11} over a so-called $r$-spin curve, which is an ordinary punctured Riemann surface $\Sigma$ with an orbifold line bundle $L$ whose $r$-th tensor power is isomorphic to the canonical bundle twisted with the orbifold data. Moreover, we assume that there is a vector space ${\bf V}$ of holomorphic functions with nice properties in order to properly perturb the gauged Witten equation. Our first main theorem is the following (the same as Theorem \ref{thm31}).
\begin{thm}\label{thm11}
Let ${\mc C}$ be a smooth $r$-spin curve with broad punctures ${\rm z}_1, \ldots, {\rm z}_b$. For an equivariant curve class $B$ (see Subsection \ref{subsection25}), a collection of strongly regular perturbations $\uds P = (P_1, \ldots, P_b)$, and a choice of asymptotic constrains at broad punctures ${\uds \kappa} = (\kappa_1, \ldots, \kappa_b)$, the moduli space ${\mc M}_{\uds P} ({\mc C}, B, {\uds \kappa})$ of perturbed gauged Witten equation over ${\mc C}$ admits an oriented virtual orbifold atlas. In particular, it has a virtual cardinality $\# {\mc M}_{\uds P}({\mc C}, B, {\uds \kappa})\in {\mb Q}$. 
\end{thm}

Here a perturbation is strongly regular if for each broad puncture ${\rm z}_i \in \Sigma$, a perturbed function $\tilde{W}_i$ is a holomorphic Morse function and all its critical values have distinct imaginary parts. These type of perturbations form a subset of the space ${\bf V}_i$ which is a complement of a real analytic hypersurface ${\bf V}_i^{\rm wall}\subset {\bf V}_i$. Theorem \ref{thm11} relies crucially on the results about compactness and Fredholm property proved in \cite{Tian_Xu}. 

The correlation function is supposed to be a multilinear function on the state space associated to the GLSM space (see Section \ref{section3}). As in \cite{Tian_Xu_2}, we define the correlation function by taking certain linear combination of the virtual cardinality $\# {\mc M}_{\uds P}({\mc C}, B, {\uds \kappa})$. Since the virtual cardinality does depend on the choice of strongly regular perturbations, it is a nontrivial procedure to prove the following invariance property of the correlation function (the same as Theorem \ref{thm41}).
\begin{thm}\label{thm12}
The correlation function defined by \eqref{eqn35} and linear extension is independent of the choice of strongly regular perturbations at broad punctures.
\end{thm}

The main idea of proving Theorem \ref{thm12} is to connect two strongly regular perturbations at a broad puncture ${\rm z}_i$ via a generic path in ${\bf V}_i$, where the path may cross the wall ${\bf V}_i^{\rm wall}$ at isolated places, and to construct a virtual chain on the  universal moduli space over this path. When crossing the wall, the so-called BPS soliton solutions form codimension one boundary of the universal moduli space. The contribution of the BPS soliton solutions gives a Picard--Lefschetz type wall-crossing formula (Theorem \ref{thm46}), which is a crucial step in deriving the invariance of the correlation function. The idea and the method are inspired by a similar argument in the Landau--Ginzburg A-model theory (see \cite{FJR3}) and the work of the second named author and S. Schecter in finite-dimensional Morse theory \cite{Lagrange_multiplier}. The detailed proof of the wall-crossing formula, though, is deferred to \cite{Tian_Xu_4}.

Both Theorem \ref{thm11} and Theorem \ref{thm12} rely on the construction of corresponding virtual cycle/chain in moduli spaces. Our construction uses a version of the virtual technique which originated in \cite{Li_Tian}. The method from \cite{Li_Tian} is topological, in the sense that the charts are only topological and the smoothness of coordinate changes is not needed. This differs from other methods, such as, the Kuranishi method or polyfold method, for which certain weak smoothness of coordinate changes has to be established. Usually, it is rather technical to establish the smoothness property required in those methods. On the other hand, in a recent work \cite{Pardon_virtual}, J. Pardon introduced a virtual technique which does not require smoothness of coordinate changes. In contrast, we get the virtual cycles by constructing perturbed sections in a more classical and topological way, while Pardon has his virtual cycles constructed via homological algebra operations.

This paper is organized as follows. In Section \ref{section2} we recall the basic set-up in \cite{Tian_Xu} of the gauged Witten equation and perturbations. In Section \ref{section3} we recall the linear Fredholm theory and then construct the virtual cycle in the case of strongly regular perturbations. This completes the definition of the correlation function. Sections \ref{section4}---\ref{section6} discuss the proof of the invariance of the correlation function: in Section \ref{section4} we consider variations of strongly regular perturbations and state the wall-crossing formula. In Section \ref{section6} we discuss the orientation issue and (re)prove the Picard--Lefschetz formula in our context. The key wall-crossing formula is proved in the companion paper \cite{Tian_Xu_4}. 

During the preparation of this paper, there appeared \cite{CLLL_15} and \cite{FJR_GLSM} which use algebraic methods to construct virtual cycles for GLSM in the absence of broad punctures. During the preparation and revision of this paper, we developed a different setting in which we can construct a cohomological field theory for much more general GLSM spaces in the so-called {\it geometric phases} (see \cite{Tian_Xu_2017, Tian_Xu_geometric}). Recently there also appeared \cite{CFGKS} which uses categorical construction of cohomological field theories for certain GLSM spaces. 

\subsection*{Acknowledgements}

The second named author would like to thank Mohammad Tehrani for many helpful discussions, Huai-Liang Chang for discussions about the algebraic geometry of GLSM, Mauricio Romo and Wei Gu for discussions about the physics of GLSM.

\section{Gauged Witten Equations and Perturbations}\label{section2}

In this section we recall the basic setup of gauged Witten equation given in \cite{Tian_Xu}. We apologize for many changes of notations  since we think the new notations are more convenient to use in this paper.

\subsection{The target space}\label{subsection21}

Let $X$ be a K\"ahler manifold and $Q: X \to {\mb C}$ be a holomorphic function, with a single critical point $\star \in X$. Suppose there is a holomorphic ${\mb C}^*$-action on $X$ such that $Q$ is homogeneous of degree $r$ ($r\geq 2$), meaning that $Q (g x) = g^r Q(x)$ for $x \in X$ and $g \in {\mb C}^*$. We assume that the ${\mb C}^*$-action restricts to a Hamiltonian $S^1$-action, and let $\mu': X \to \i {\mb R}$ denote a moment map. The pair $(X, Q)$ is called an {\it LG (Landau--Ginzburg) space} and the ${\mb C}^*$-action is referred to as the {\it R-symmetry}. 

Denote the group $S^1$ for the R-symmetry by $G'$. We call $(\tilde{X}, W, G)$ the {\it GLSM (gauged linear $\sigma$-model) space} whose elements are given as follows.
\begin{enumerate}
\item $\tilde{X} = {\mb C} \times X$. Its coordinates are denoted by $\tilde{x}= (p, x)$ and the factor ${\mb C}$ is equipped with the standard K\"ahler structure.

\item $G = G' \times G'' \simeq G' \times S^1$ acts on $\tilde X$ where the $G'$-factor acts on the $x$ coordinate and the $G'' \simeq S^1$-factor acts by $g'' (p, x) = ( (g'')^{-r} p, g'' x)$. The $G$-action is Hamiltonian with a moment map
\beq\label{eqn21}
\mu (p, x) = ( \mu', \mu''):=\Big( \mu' (x),\ \mu' (x) + \frac{\i r}{2} |p|^2 - \tau \Big).
\eeq
Here $\tau \in \i {\mb R}$ is a constant, which we fix from now on. 

\item $W: \tilde{X} \to {\mb C}$ is defined as $W( p, x) = p Q(x)$, which is $(G'')^{\mb C}$-invariant where $(G'')^{\mb C}$ is the complexification of $G''$.
\end{enumerate}

We make the same assumptions on the LG space as listed in \cite[Hypothesis 2.2, 2.4]{Tian_Xu} in order to guarantee the compactness of the moduli space. We do not need these assumptions explicitly in this paper, except for the following properness property.
\begin{hyp}\label{hyp21}
The functions $|\nabla Q|$ and $\mu'$ are both proper on $X$. 
\end{hyp}

We identify the Lie algebras of $G'$ and $G''$ with $\i {\mb R}$ and denote a vector of ${\mf g} = {\bf Lie} G$ as $\xi = ( \xi', \xi'') \in {\mf g}$. Denote by ${\mc X}_\xi \in \Gamma (TX)$ the infinitesimal action of $\xi$. To define the vortex equation, one also needs to specify a metric on ${\mf g}$. We do not simply take the product metric on ${\mf g} = {\mf g}' \times {\mf g}''$ but use the norm defined by
\beq\label{eqn22}
\big| (\xi', \xi'') \big|^2 = \lambda^{-1} \big| \xi' \big|^2 + \big| \xi'' \big|^2
\eeq
for a sufficiently large $\lambda$. The reason is explained in \cite{Tian_Xu}.

\subsection{Rigidified $r$-spin curves}

We recall the notion of $r$-spin curves. Since we will only consider a fixed domain curve, it is more convenient to rephrase the definitions without using the language of orbifolds.

\begin{defn}\label{defn22} Fix $r \in {\mb Z}$, $r \geq 3$. An {\it $r$-spin curve} with $k$ markings is a tuple $(\Sigma, {\bf z}, {\bf m}, L', \phi')$ where $\Sigma$ is a compact Riemann surface, ${\bf z} = \{ {\rm z}_1, \ldots, {\rm z}_k\}$ is a set of marked points, ${\bf m}$ is a collection of integers $m_1, \ldots, m_k \in \{0, 1, \ldots, r-1\}$, $L' \to \Sigma$ is a holomorphic line bundle, and $\phi'$ is a line bundle isomorphism
\beq\label{eqn23}
\phi': (L')^{\otimes r} \simeq K_\Sigma \otimes {\mc O}( (1-m_1) {\rm z}_1) \otimes \cdots \otimes {\mc O}((1-m_k) {\rm z}_k).
\eeq
\end{defn}

Let ${\mc C} = (\Sigma, {\bf z}, {\bf m}, L', \phi')$ be an $r$-spin curve. Denote $\Sigma^* = \Sigma \setminus {\bf z}$. It follows from the definition that if $z$ is a local coordinate on $\Sigma^*$, then there exists a local holomorphic frame $e'$ of $L'$, unique up to ${\mb Z}_r$-action, such that $\phi'((e')^{\otimes r}) = dz$. Moreover, if $w$ is a local coordinate centered at ${\rm z}_i\in {\bf z}$, then there exists local frames $e_i' $ of $L'$ near ${\rm z}_i$, unique up to ${\mb Z}_r$-action, such that 
\beq\label{eqn24}
\phi'( (e_i')^{\otimes r})  = w^{m_i} \frac{dw }{ w}.
\eeq
We call $\gamma_i = \exp ( 2 m_i \pi \i /r)$ the {\it monodromy} of the $r$-spin structure at ${\rm z}_i$.

Now we introduce the important notions of broadness/narrowness. Notice that since $Q$ is homogeneous with respect to the $G'$-action, for any $\gamma \in G'$, the critical point $\star$ is contained in the fixed point set $X_\gamma \subset X$ of $\gamma:X \to X$. 

\begin{defn}
Given $\gamma \in {\mb Z}_r \subset G'$, let $X_\gamma$ be the fixed point set of $\gamma$ in the LG space. $\gamma$ is called {\it broad} (resp. {\it narrow}) if $X_\gamma \neq \{\star\}$ (resp. $X_\gamma = \{\star\}$). A marking ${\rm z}_i$ of an $r$-spin curve is broad/narrow if the monodromy $\gamma_i$ is broad/narrow.
\end{defn}

\begin{defn}
Given an $r$-spin curve ${\mc C} = (\Sigma, {\bf z}, {\bf m}, L', \phi')$. A {\it rigidification} of ${\mc C}$ at a broad marking ${\rm z}_i$ is a choice of local frame $e_i$ of $L$ near ${\rm z}_i$ satisfying \eqref{eqn24}. An $r$-spin curve with rigidifications at all its broad markings is called a {\it rigidified $r$-spin curve}.
\end{defn}

The purpose of rigidifying the $r$-spin curve is the same as in \cite{FJR3}, because to perturb the equation we have to break the symmetry $Q$ or $W$ has. From now on we fix an $r$-spin curve $(\Sigma, {\bf z}, {\bf m}, L', \phi')$ and a rigidification. 

\subsection{Perturbations}

Now we state the assumption that allow us to perturb the gauged Witten equation. 

\begin{hyp}\label{hyp25}
For each broad $\gamma\in {\mb Z}_r$, there exists a nonzero, finite dimensional complex vector space ${\bf V}_\gamma$ parametrizing certain $\gamma$-invariant holomorphic functions $F_\gamma : X \to {\mb C}$ satisfying \cite[Hypothesis 2.8]{Tian_Xu}, particularly the following conditions. 
\begin{enumerate}

\item Each $F_\gamma\in {\bf V}_\gamma$ is the sum of homogeneous functions and each summand has uniformly bounded derivative on $X$.

\item For each $a_\gamma \in {\mb C}^*$, there is a complex analytic subset ${\bf V}_{\gamma, a_\gamma}^{\rm sing} \subset {\bf V}_\gamma$ such that for each $F_\gamma \in {\bf V}_\gamma \setminus {\bf V}_{\gamma, a_\gamma}^{\rm sing} $, the restriction of the function
\beqn
\tilde{W}_\gamma (x, p) = W(x, p) - a_\gamma p + F_\gamma (x)
\eeqn	
to $\tilde{X}_\gamma	$ is a holomorphic Morse function with finitely many critical points.

\item For each $a_\gamma \in {\mb C}^*$ and $F_\gamma \in {\bf V}_{\gamma, a_\gamma}^{\rm sing}$, $|\nabla \tilde W_\gamma|$ is bounded away from zero outside a compact subset. 

\item The set ${\bf V}_{\gamma, a_\gamma}^{\rm wall} \subset {\bf V}_\gamma \setminus {\bf V}_{\gamma, a_\gamma}^{\rm sing}$ defined by the coincidence of the imaginary parts of two critical values of $\tilde{W}_\gamma |_{\tilde{X}_\gamma}$ is a locally finite union of real analytic hypersurfaces.
\end{enumerate}
\end{hyp}

For $j= 1, \ldots, b$, denote ${\bf V}_i = {\bf V}_{\gamma_i}$ and $\tilde{\bf V}_i = {\mb C}^* \times {\bf V}_i$. Denote
\beqn
\tilde{\bf V}_i^{\rm sing} = \big\{ (a_i, F_i)\ |\  F_i \in {\bf V}_{\gamma_i, a_i}^{\rm sing} \big\},\ \tilde{\bf V}_i^{\rm wall} = \big\{ (a_i, F_i)\ |\ F_i \in {\bf V}_{\gamma_i, a_i}^{\rm wall} \big\}.
\eeqn

\begin{defn}\label{defn26}
$P_i = (a_i, F_i)\in  \tilde{\bf V}_i$ is called a {\it perturbation} of $W$ at the $j$-th broad puncture. It is called {\it regular} (resp. {\it strongly regular}) if $P_i \notin \tilde{\bf V}_i^{\rm sing}$ (resp. $P_i \notin \tilde{\bf V}^{\rm wall}_i \cup \tilde {\bf V}^{\rm sing}$). It is called {\it small} if it is close to the origin of $\tilde{\bf V}_i$. A perturbation to the gauged Witten equation over ${\mc C}$ is a collection
\beqn
\uds P = \big( P_i \big)_{j=1}^b  = \big( a_i, F_i \big)_{j=1}^b
\eeqn
where $P_i$ is a perturbation at the $j$-th broad puncture. It is called regular (resp. strongly regular) if each $P_i$ is regular (resp. strongly regular).
\end{defn}

Let $P_i = (a_i, F_i)$ be a perturbation at ${\rm z}_i$. We write
\begin{align*}
&\ \tilde{W}_i = W - a_i p + F_i,\ &\ \tilde{W}_{\tilde{X}_i}:= \tilde{W}_i|_{\tilde{X}_i}.
\end{align*}
We introduce the following notations. For ${\mz t} \in (G')^{\mb C} = {\mb C}^*$,\footnote{We were using the symbol $\delta$ instead of ${\mz t}$ in the previous papers \cite{Tian_Xu, Tian_Xu_2}.} we denote 
\beq\label{eqn25}
\tilde{W}_i^{\mz t} (x, p) = {\mz t}^r \tilde{W}_i ( {\mz t}^{-1} x, p) = W - {\mz t}^r a_i p + {\mz t}^r F_i( {\mz t}^{-1} x).
\eeq
Then we have
\beq\label{eqn26}
(x, p)\in {\bf Crit} \big[ \tilde{W}_i|_{{\tilde{X}_i}} \big] \Longleftrightarrow  ({\mz t} x, p) \in {\bf Crit} \big[  \tilde{W}_i^{\mz t}|_{{\tilde{X}_i}} \big];\ \ \  \tilde{W}_i^{{\mz t}}({\mz t} x, p) = {\mz t}^r \tilde{W}_i(x, p).
\eeq

\begin{rem}
The purpose of using perturbation is the same as in finite dimensional Morse theory of functions with degenerate singularities. There are alternative ways to develop the mathematical GLSM without perturbation. One possibility is to adapt the approach of Jake Solomon \cite{Solomon_preprint}, by treating the degenerate function $W$ directly. Another possibility is similar to the approach of Venugopalan \cite{Venugopalan_quasi} of defining quasimap invairants, when we are in the {\it geometric phase} (see \cite{Tian_Xu_2017} and \cite{Tian_Xu_geometric}). 
\end{rem}

\subsection{Gauged Witten equation}\label{subsection24}

The gauged Witten equation is the combination of the Witten equation and the symplectic vortex equation, where the latter requires an area form on the Riemann surface. Choose from now on a smooth area form on the closed surface $\Sigma$, which determines a Hodge star operator
\beqn
*: \Omega^2(\Sigma) \to \Omega^0(\Sigma).
\eeqn

On the other hand, we use a cylindrical metric to define Sobolev norms. For each ${\rm z}_i$, we have fixed a holomorphic coordinate $w$ centered at ${\rm z}_i$. We identify a cylindrical end $C_i$ with the cylinder $[0, +\infty) \times S^1$. The latter has coordinates $s + \i t = - \log w$. Choose an area form on $\Sigma^*$ whose restriction to $C_i$ coincides with $ds \wedge dt$. It induces a cylindrical metric on $\Sigma^*$. For $\tau>0$, $p>0$ and $k \geq 0$, for any open subset $U \subset \Sigma^*$, let $W_\tau^{k, p}(U)$ denote the weighted Sobolev space where the norm is defined as $\| f \|_{W_\tau^{k, p}(U)} := \| h_s^{- \tau /2} f \|_{W^{k, p}(U)}$. Here the latter Sobolev norm is taken with respect to the cylindrical metric on $U \subset \Sigma^*$. 

From now on we fix $p>2$.

The last auxiliary data we need to specify is a Hermitian metric on $L'$. Fix a reference Hermitian metrics $H'$ on $L'|_{\Sigma^*}$, which is of class $W^{1, p}_{\rm loc}$, such that for each puncture ${\rm z}_i$ with $m_i$ and $e_i'$ given in \eqref{eqn24}, for some $\tau > 0$, 
\beqn
\log ( |w|^{- {m_i/r}} \| e_i' \|_{H'} ) \in W_\tau^{2, p}(C_i).
\eeqn
Let $P' \to \Sigma^*$ be the unit circle bundle of $(L', H')$. On the other hand, we choose a smooth $G''$-bundle $P'' \to \Sigma$. Define 
\beqn
P:= P' \underset{\ \Sigma^*}{\times} P'' \to \Sigma^*.
\eeqn
This is a principal bundle with structure group $G = G' \times G''$. Since $G$ acts on $\tilde X$, we take the associated bundle over $\Sigma^*$, denoted by $Y$. 

\subsubsection{The variables}

The variables of the gauged Witten equation include connections on $P$, sections of $Y$, and a finite degrees of freedom called ``frames'' at broad markings. These variables transform via gauge transformations. 

Let us first specify the group of gauge transformations. Notice that $G$ is abelian. For $\tau>0$, let ${\mc G}_\tau'$ (resp. ${\mc G}_\tau''$) be the space of maps $g': \Sigma^* \to G'$ (resp. $g'': \Sigma^* \to G''$) of class $W^{2, p}_{\rm loc}$ such that the restrictions to each cylindrical end $C_i$ can be written as $g' = \exp (\i h')$ (resp. $g'' = \exp (\i h'')$) with $h'\in W_\tau^{2, p}$ (resp. $h'' \in W_\tau^{2, p}$). Define ${\mc G}_\tau = {\mc G}_\tau' \times{\mc G}_\tau''$. Then ${\mc G}_\tau$, ${\mc G}_\tau'$, ${\mc G}_\tau''$ are Banach Lie groups. Let ${\mc G}$ (resp. ${\mc G}'$, ${\mc G}''$) be the union of ${\mc G}_\tau$ (resp. ${\mc G}_\tau'$, ${\mc G}_\tau''$) for all $\tau>0$ which is also a group.

Now we discuss the class of connections. First, let $A_0'$ be the Chern connection on $P'$ associated to the metric $H'$. Let ${\mc A}_\tau'$ be the space of connections on $P'$ that can be obtained from $A_0'$ by a complex gauge transformation of regularity $W_\tau^{2,p}$, and ${\mc A}'$ be the union of ${\mc A}_\tau'$ for all $\tau>0$. On the other hand, fix from now on trivializations of $P''|_{C_i}$. Let ${\mc A}_\tau''$ be the space of connections on $P''$ of regularity $W^{1, p}_{\rm loc}$ whose restrictions to each $C_i$ is of the form $d + \alpha''$ with $\alpha''$ a 1-form of regularity $W_\tau^{1, p}$, and ${\mc A}''$ be the union of ${\mc A}_\tau''$ for all $\tau>0$. Denote ${\mc A}_\tau = {\mc A}_\tau' \times {\mc A}_\tau''$ and ${\mc A} = {\mc A}' \times {\mc A}''$. Then ${\mc G}_\tau$ acts on ${\mc A}_\tau$ and ${\mc G}$ acts on ${\mc A}$ via usual gauge transformations. 

Another variable of the gauge Witten equation is a section of $Y$, which is usually denoted by $u$. We allow arbitrarily weak regularity of $u$ at this moment. Later we will consider more specific asymptotic constrains on $u$. 

The last set of variables are frames of $L''$ at markings. Let ${\bf Fr}_i:= \big\{ \phi_i: G'' \to P_{{\rm z}_i}'' \big\}$ be the set of trivializations of $P_{{\rm z}_i}''$, which is an $S^1$-torsor. There is a left $G''$-action $e^{\i \theta} \cdot \phi_i = \phi_i \circ e^{\i \theta}$. Moreover, using the trivializations of $P''|_{C_i}$ fixed from the last paragraph, we extend each $\phi_i \in {\bf Fr}_i$ to a trivialization $\phi_i^\bullet: G'' \times C_i \to P''|_{C_i}$ such that $(e^{\i \theta} \phi_i)^\bullet = \phi_i^\bullet \circ e^{\i \theta}$. From now on we identify $\phi_i\in {\bf Fr}_i$ with $\phi_i^\bullet$. Denote $\uds{\bf Fr} = \prod_{j=1}^b {\bf Fr}_i$ whose elements are denoted by $\uds \phi = (\phi_1, \ldots, \phi_b)$. 

The variables of the gauged Witten equation are ${\bm u} = (A, u, \uds\phi)$ as described above. 

\begin{notation}\label{notation28}
We will often need to consider the expression of a section $u \in \Gamma(Y)$ under local trivializations. Consider the trivialization of $P'|_{C_i}$ induced from the unitary frame $e_i'/ \| e_i'\|_{H'}$ where $e_i'$ is the one in \eqref{eqn24} (which is unique by the rigidification). Together with $\phi_i \in {\bf Fr}_i$ it determines a trivialization of $P|_{C_i}$ and hence $Y|_{C_i}$, which, by abuse of notation, is still denoted by $\phi_i^\bullet$. Then for $u \in \Gamma(Y)$, we write $u_i^\bullet: C_i \to \tilde X$ the map induced from $u$ and $\phi_i^{\bullet}$.
\end{notation}

\subsubsection{The lift of the superpotential}

We describe how to lift the superpotential $W$ to $Y$. Given $A = (A', A'') \in {\mc A}$. By the isomorphism $\phi'$ in the $r$-spin structure (see \ref{eqn23}), around each point $q \in \Sigma^*$, there exists local coordinate $z_q$ and local frame $e_q'$ of $L'$, which is holomorphic with respect to $A'$, such that $\phi' ( (e_q')^{\otimes r}) = dz_q$. 
Let $e_q''$ be an arbitrary local frame of $P''$. Define
\beqn
{\mc W}_A ( [e_q', e_q'', x]) = W(x) dz_q.
\eeqn
This lift only depends on $A'$ but not $A''$. 

The lift of perturbed superpotential does depend on both components of the connection and the frames. Upon choosing $\uds P$, denote the perturbation of ${\mc W}_A$ by $\tilde{\mc W}_{A, \uds\phi}$. Instead of stating the full definition, for the purpose of the current paper, we only list a few properties of $\tilde{\mc W}_{A, \uds\phi}$. The details can be found in \cite{Tian_Xu}.
\begin{enumerate}

\item $\tilde{\mc W}_{A, \uds\phi} \in \Gamma(Y, \pi^* K_{\Sigma^*})$ depends on both $A\in {\mc A}$ and $\uds\phi$ smoothly. 

\item Given $g: \Sigma \to G$ with $g({\rm z}_i) = {\bf Id}$, viewed as a gauge transformation on $P$, one has
\beqn
\tilde{\mc W}_{g^* A, \uds\phi} ( g^{-1} x) = \tilde{\mc W}_{A, \uds\phi} (x).
\eeqn

\item Outside the cylindrical ends around broad markings, $\tilde{\mc W}_{A, \uds\phi} = {\mc W}_A$. 

\item There is a number ${\mz t}_A$ depending smoothly on $A \in {\mc A}$, and, on a cylindrical end around a broad marking ${\rm z}_i$, there exist holomorphic frames $e_i'$, $e_i''$ of $L'$, $L''$, such that $\phi' ((e_i')^{\otimes r}) = w^{m_i} dw/w$ and 
\beqn
\tilde{\mc W}_{A, \uds\phi} ([ e_i', e_i'', z, x]) = \tilde{W}_i^{{\mz t}_A} \circ w^{\frac{m_i}{r}}.
\eeqn
(Recall the notation $W_i^{\mz t}$ from \eqref{eqn25}.) Moreover, if $\uds g = (g_1'', \ldots, g_b'')$ where $g_i'' \in G''$, then one has 
\beqn
\tilde{\mc W}_{A, \uds g \cdot \uds \phi}|_{C_i} = \tilde{\mc W}_{A, \uds\phi}|_{C_i} \circ g_i''.
\eeqn
\end{enumerate}

\subsubsection{The gauged Witten equation}

For the fibre bundle $Y \to \Sigma^*$, the K\"ahler structure on $\tilde{X}$ induces a Hermitian structure on the vertical tangent bundle $T^\bot Y$. On the other hand, for any continuous connection $A$, the tangent bundle $TY$ splits as the direct sum of $T^\bot Y$ and the horizontal tangent bundle. The latter is isomorphic to $\pi^* T\Sigma^*$, therefore the connection induces an integrable almost complex structure on $Y$. The vertical differential of $\tilde{\mc W}_{A, \uds\phi}$ is a section
\beqn
d \tilde{\mc W}_{A, \uds\phi} \in \Gamma ( Y, \pi^* K_{\Sigma^*} \otimes ( T^\bot Y )^* ).
\eeqn
The Hermitian metric on $T^\bot Y$ induces a conjugate linear isomorphism $T^\bot Y \simeq ( T^\bot Y )^*$; the complex structure on $\Sigma^*$ also induces a conjugate linear isomorphism $K_{\Sigma^*} \simeq \Lambda^{0,1}_{\Sigma^*}$. Therefore we have a conjugate linear isomorphism
\beqn
\pi^* K_{\Sigma^*} \otimes ( T^\bot Y )^* \simeq \pi^* \Lambda^{0,1}_{\Sigma^*} \otimes T^\bot Y.
\eeqn
The image of $d \tilde{\mc W}_{A, \uds\phi}$ under this map is denoted by $\nabla \tilde{\mc W}_{A, \uds\phi} \in \Gamma ( Y, \pi^* \Lambda^{0,1}_{\Sigma^*} \otimes T^\bot Y )$. The $\uds P$-perturbed gauged Witten equation reads
\beq\label{eqn27}
\left\{ \begin{array}{ccc}
\ov\partial_A u + \nabla \tilde{\mc W}_{A, \uds\phi} (u) & = & 0;\\
* F_A + \mu^* (u) & = & 0.
\end{array}\right.
\eeq
Each term in the system is defined as follows: the connection $A$ induces a continuous splitting $TY \simeq T^\bot Y \oplus \pi^* T \Sigma^*$ and $d_A u \in W^{1, p}_{\rm loc}(T^* \Sigma^* \otimes u^* T^\bot Y )$ is the covariant derivative of $u$; the $G$-invariant complex structure $J$ induces a complex structure on $T^\bot Y$ and $\ov\partial_A u$ is the $(0, 1)$-part of $d_A u$ with respect to this complex structure. $\nabla \tilde{\mc W}_{A, \uds\phi} (u)$ is the pull-back of $\nabla \tilde{\mc W}_{A, \uds\phi}$ by $u$, which lies in the same vector space as $\ov\partial_A u$. $F_A\in \Omega^2(\Sigma^*) \otimes {\mf g}$ is the curvature form of $A$, $*: \Omega^2(\Sigma^*) \to \Omega^0(\Sigma^*)$ is the Hodge-star operator with respect to the smooth metric on $\Sigma$; the moment map $\mu$ lifts to a ${\mf g}$-valued function on $Y$ and $\mu^*(u)$ is the dual of $\mu(u)$ with respect to the metric defined by \eqref{eqn22}.

\subsection{Asymptotic behavior and compactness}\label{subsection25}

In \cite{Tian_Xu} we proved that if ${\bm u} = (A, u, {\uds \phi})$ is a finite energy solution to \eqref{eqn27} with bounded image (which we called a {\it bounded} solution), then $u$ has good asymptotic behavior at punctures. More precisely, let $(s, t)$ be the cylindrical coordinates on $C_i$. If ${\rm z}_i$ is narrow, i.e., the fixed point set of the monodromy acting on $X$ is isolated, then there is $\tilde{p}_i = (\star, p_i) \in \tilde{X}$ s.t.	 
\beqn
\lim_{z \to {\rm z}_i} e^{\frac{\i m_i t}{r}} u_i^{\bullet} (s, t) = \tilde{p}_i; 
\eeqn
if ${\rm z}_i$ is broad, then there is a critical point $\kappa_i = (x_i, p_i) \in {\bf Crit} [ \tilde{W}_i|_{{\tilde{X}_i}} ]$ such that 
\beqn
\lim_{z \to {\rm z}_i} e^{ \frac{\i m_i t}{r}} u_i^{\bullet} (s, t) = \kappa_i^{{\mz t}_A}:= ({\mz t}_A x_i, p_i).
\eeqn
Moreover, the rate of convergence is exponential, i.e., can be controlled by $e^{-\tau_0 s}$ for some $\tau_0>0$. A consequence of this result is that any bounded solution defines a homology class $B \in H^G_2(\tilde{X}; {\mb Q})$. We call a class in $H_2^G(\tilde{X}; {\mb Q})$ an {\it equivariant curve class}\footnote{When $X = {\mb C}^N$, an equivariant curve class is essentially the degrees of $L'$ and $L''$.}.

The behavior of the perturbed gauged Witten equation near broad punctures calls for the study of solitons, i.e., solutions over the infinite cylinder. Let $\gamma_i = \exp( \frac{\i 2\pi m_i}{r} )$ be the monodromy at ${\rm z}_i$ and $\tilde{W}_i$ be the perturbed superpotential. For ${\mz t} \in (0, 1)$, consider the following equation for maps $\sigma_i: {\mb R} \times S^1 \to \tilde{X}$.
\beq\label{eqn28}
\frac{\partial \sigma_i }{\partial s} + J \Big[ \frac{\partial \sigma_i}{\partial t} + {\mc X}_{\i m_i /r} (\sigma_i) \Big] + \nabla \tilde{W}_i^{{\mz t}}(e^{\frac{\i m_i t}{r}} \sigma_i) = 0.
\eeq
Since $\tilde{W}_i$ is $\gamma_i$-invariant, $\nabla \tilde{W}_i^{{\mz t}}( e^{m_i \frac{\i t}{r}}\sigma_i)$ is well-defined over ${\mb R} \times S^1$. A bounded solution to \eqref{eqn28} is called a {\it soliton}, for which we know there are $\kappa_{i, \pm} \in {\bf Crit} \big[ \tilde{W}_{\tilde{X}_i}^{{\mz t}}\big]$ such that
\beqn
\lim_{s \to \pm \infty} e^{\frac{\i m_i t}{r}} \sigma_i(s, t) = \kappa_{i, \pm}.
\eeqn
Then we have the energy identity
\beq\label{eqn29}
\Big\| \frac{\partial \sigma_i }{\partial s} + J \Big[ \frac{\partial \sigma_i}{\partial t} + {\mc X}_{\i m_i/r}  (\sigma_i) \Big] \Big\|_{L^2({\mb R} \times S^1)}^2 = 2\pi \left[ \tilde{W}_i^{{\mz t}} (\kappa_{i, -}) - \tilde{W}_i^{{\mz t}} (\kappa_{i, +}) \right].
\eeq
We have known from \cite{Tian_Xu} that nontrivial solitons exist only when $\tilde{W}_i$ is not strongly regular, i.e., $\tilde{W}_i|_{\tilde{X}_i}$ has two different critical values with identical imaginary part.

We call a soliton $\sigma_i$ a {\it BPS soliton} if $e^{ \i m_i t/r} \sigma_i(s, t)$ is independent of $t$ for all $s \in {\mb R}$; otherwise it is called a {\it non-BPS soliton}. If $\sigma_i$ is a BPS soliton, then it is easy to see that $e^{\i m_i t/r} \sigma_i(s, t) \in \tilde{X}_i$ and is a negative gradient flow line of the real part of $\tilde{W}_i^{\mz t}|_{\tilde{X}_i}$.

Fixing an equivariant curve class $B$, a regular perturbation $\uds P$, and a collection of critical points $\uds \kappa = (\kappa_1, \ldots, \kappa_b)$. Let $\hat {\mc M}_{\uds P}^* ({\mc C}, B, \uds \kappa)$ be the the set of solutions to the $\uds P$-perturbed gauged Witten equation over ${\mc C}$ that represent $B$ and have asymptotic limits at broad markings labelled by $\uds \kappa$. By the gauge symmetry of the perturbed gauged Witten equation, one can define the moduli space
\beqn
{\mc M}_{\uds P}^* ({\mc C}, B, \uds\kappa) = \hat {\mc M}_{\uds P}^* ({\mc C}, B, \uds \kappa) / {\mc G}.
\eeqn

By the compactness result of \cite{Tian_Xu}, ${\mc M}_{\uds P}^* ({\mc C}, B, \uds\kappa)$ can be compactified by adding soliton solutions. Let ${\mc M}_{\uds P}({\mc C}, B, \uds\kappa)$ be the compactified moduli space. Moreover, the notion of sequential convergence defined in \cite{Tian_Xu} induces a topology on ${\mc M}_{\uds P}({\mc C}, B, \uds\kappa)$, in the same way as Gromov--Witten theory (see \cite[Section 5.6]{McDuff_Salamon_2004}) which is Hausdorff and compact. In particular, when $\uds P$ is strongly regular, no soliton solutions need to be added and ${\mc M}_{\uds P}^* ({\mc C}, B, \uds\kappa) = {\mc M}_{\uds P}({\mc C}, B, \uds \kappa)$.

\section{Definition of the Correlation Function}\label{section3}

The definition of the GLSM correlation function given in \cite[Section 3]{Tian_Xu_2} relies on the following theorem. For the precise meanings of virtual orbifold atlas and the virtual cardinality, see the section7.

\begin{thm}\label{thm31}
Given a smooth $r$-spin curve ${\mc C}$, an equivariant curve class $B$, a strongly regular perturbation $\uds P = (P_1, \ldots, P_b)$, and a choice of asymptotic constrains at broad punctures ${\uds \kappa}$, the moduli space ${\mc M}_{\uds P} ({\mc C}, B, {\uds \kappa})$  admits an oriented virtual orbifold atlas. In particular, when the virtual dimension is zero, there is a well-defined virtual cardinality
\beqn
\# {\mc M}_{\uds P} ({\mc C}, B, {\uds \kappa}) \in {\mb Q}.
\eeqn
Moreover, when ${\mc C}$ has at least one broad marking, one has
\beqn
\# {\mc M}_{\uds P}({\mc C}, B, {\uds \kappa}) \in {\mb Z}.
\eeqn
\end{thm}

Theorem \ref{thm31} will be proved momentarily, in Subsections \ref{subsection32}--\ref{subsection34}. Before doing that we first recall the definition of the correlation function.

\subsection{Definition of the correlation function}

\subsubsection{The state space}

\begin{defn} Let $\gamma$ be an element of ${\mb Z}_r$.
\begin{enumerate}

\item If $\gamma$ is narrow, then the $\gamma$-sector of the state space ${\mc H}_\gamma$ is a 1-dimensional ${\mb Q}$-vector space, generated by an element $e_\gamma$.

\item If $\gamma$ is broad, then the $\gamma$-sector of the state space is
\beqn
{\mc H}_\gamma:= H^{\rm mid} \big( X_{Q_\gamma}^c/ {\mb C}^*; {\mb Q} \big).
\eeqn
Here $Q_\gamma$ is the restriction of $Q$ to $X_\gamma$ and $X_{Q_\gamma}^c: = X_\gamma \setminus Q_\gamma^{-1}(0)$. 

\item The total GLSM state space is
\beqn
{\mc H}_{\rm GLSM}:={\mc H}_Q:= \bigoplus_{\gamma\in {\mb Z}_r} {\mc H}_\gamma.
\eeqn
\end{enumerate}
\end{defn}

Suppose $\gamma$ is broad. Since $Q_\gamma:= Q|_{X_\gamma}: X \to {\mb C}$ is homogeneous of degree $r$, for any $a \in {\mb C}^*$, the inclusion $Q^a_\gamma:= Q_\gamma^{-1}(a) \hookrightarrow X_{Q_\gamma}^c$ induces a diffeomorphism
\beqn
Q_\gamma^a/ {\mb Z}_r \simeq X_{Q_\gamma}^c/ {\mb C}^*
\eeqn
as orbifolds. By the basic property of cohomology of orbifolds and equivariant cohomology, we know that
\beq\label{eqn31}
{\mc H}_\gamma \simeq H^{\rm mid} \big( Q_\gamma^a; {\mb Q} \big)^{{\mb Z}_r}.
\eeq
The monodromy action on the cohomology $H^*(Q_\gamma^a; {\mb Q} \big)$ induced from the locally trivial fibration $Q_\gamma: X_{Q_\gamma}^c \to {\mb C}^*$ is equivalent to operator on $H^*(Q_\gamma^a; {\mb Z})$ induced from the ${\mb Z}_r$-action on $Q_\gamma^a$. Therefore, ${\mc H}_\gamma$ is the monodromy invariant part of the middle dimensional rational cohomology of a regular fibre of $Q_\gamma$.

For each $\gamma\in {\mb Z}_r$, we assume that there exists a perfect pairing $\langle \cdot, \cdot \rangle_\gamma: {\mc H}_\gamma \otimes {\mc H}_{\gamma} \to {\mb Q}$ such that with respect to the obvious isomorphism $\varepsilon: {\mc H}_\gamma \to {\mc H}_{\gamma^{-1}}$, $\langle \cdot, \cdot \rangle_\gamma = \langle \varepsilon \cdot, \varepsilon \cdot \rangle_{\gamma^{-1}}$. In many situations one has a naturally defined perfect pairing. Indeed, we have the following ${\mb Z}_r$-equivariant exact sequence
\beqn
\xymatrix{ H^{*-1} \big( X_\gamma \big) \ar[r] & H^{*-1} \big( Q_\gamma^a \big) \ar[r] & H^* \big( X_\gamma, Q_\gamma^a \big) \ar[r] & H^* \big( X_\gamma \big)}.
\eeqn
If the arrow in the middle is an isomorphism in ${\mb Q}$-coefficients for $*= n_\gamma$ (it is the case when $X = {\mb C}^N$), then one can define a pairing between $H^{\rm mid} ( Q_\gamma^a; {\mb Q} )$ and $H^{\rm mid} ( Q_\gamma^{-a}; {\mb Q} )$ as in \cite[Page 36]{FJR2}. Nonetheless, the pairing defines an isomorphism
\beq\label{eqn32}
{\mc O} \mapsto {\mc O}^*: {\mc H}_\gamma \mapsto {\mc H}_\gamma^* \simeq H_{\rm mid} \big( Q_\gamma^a; {\mb Q} \big)^{{\mb Z}_r}.
\eeq

\begin{rem}
One can shift the degrees of states in ${\mc H}_\gamma$ by the convention of Chen--Ruan cohomology. However in this paper we do not do that explicitly; indeed, in this paper, we are skipping all discussions about the dimensions of the virtual cycles as well as the degrees of states. They can be easily recovered from the index formula proved in \cite{Tian_Xu}.
\end{rem}

\subsubsection{Vanishing cycles in $Q_\gamma^a$}

The use of Lagrange multiplier requires us to consider the complex Morse theory on $Q_\gamma^a$. If we have a holomorphic Morse function $F: Q_\gamma^a\to {\mb C}$, then the unstable or stable manifolds of its critical points (with respect to the negative gradient flow of the real part of $F|_{Q_\gamma^a}$) represent certain $\infty$-relative cycles. For any compact subset $Z \subset X$, consider the relative homology $H_* \big( Q_\gamma^a, Q_\gamma^a \setminus Z \big)$. The inverse limit with respect to the direct system of compact subsets under inclusion is denoted by $H_* ( Q_\gamma^a, \infty)$. This is the dual space of $H^*_c ( Q_\gamma^a )$. Then we have the intersection pairing
\beq\label{eqn33}
\cap: H_{\rm mid} \big( Q_\gamma^a \big) \otimes H_{\rm mid} \big( Q_\gamma^a, \infty \big) \to {\mb Z}.
\eeq

Now take a strongly regular perturbation $P_i = (a_i, F_i) \in \tilde{\bf V}_i$. Abbreviate $Q_{\gamma_i}$ by $Q_i$ and $Q_i^{-1}(a_i)$ by $Q_i^{a_i}$. Consider the negative gradient flow of ${\bf Re} F_i$ restricted to $Q_i^{a_i}$. For each critical point $\kappa_i$ of $F_i|_{Q_i^{a_i}}$, denote by
\beqn
\big[ W_{\kappa_i}^u \big] \in H_{\rm mid} \big( Q_i^{a_i}, F_i^{-\infty} ; {\mb Z} \big)\ \Big( {\rm resp.}\ \big[ W_{\kappa_i}^s \big] \in H_{\rm mid} \big( Q_i^{a_i}, F_i^{+\infty}; {\mb Z} \big) \Big)
\eeqn
the class of the unstable (resp. stable) manifold of this flow. Here
\beqn
F_i^{+\infty}= Q_i^{a_i} \cap ( {\bf Re} F_i )^{-1} ( [M, +\infty) ),\ F_i^{-\infty}= Q_i^{a_i} \cap ( {\bf Re} F_i )^{-1} ( (-\infty, -M])
\eeqn
for some $M>>0$. We still use $\big[ W_{\kappa_i}^{u/s} \big]$ to denote their images under the map
\beqn
H_{\rm mid} \big( Q_i^{a_i}, F_i^{\pm\infty}; {\mb Z} \big) \to H_{\rm mid} \big( Q_i^{a_i}, \infty; {\mb Z} \big).
\eeqn

\subsubsection{The correlation function}

The correlation function is a multilinear map
\beq\label{eqn34}
\big\langle \cdot \big\rangle_B: \bigotimes_{j=1}^k {\mc H}_{\gamma_i} \to {\mb Q}.
\eeq
To define \eqref{eqn34}, we choose the last $n$ inputs (narrow states) to be the generators of the corresponding sectors ${\mc O}_i = e_{\gamma_i} \in {\mc H}_{\gamma_i}$, $j = b+1, \ldots, b+n$. Suppose the first $b$ inputs (the broad states) are ${\mc O}_i \in {\mc H}_{\gamma_i}$, $j = 1, \ldots, b$. Then define
\beq\label{eqn35}
\big\langle {\mc O}_1, \ldots, {\mc O}_b, {\mc O}_{b+1}, \ldots, {\mc O}_{b + n} \big\rangle_{B, {\uds P}}: = \sum_{{\uds \kappa}} \# {\mc M}_{\uds P} ( {\mc C}, B, {\uds \kappa} ) \cdot \prod_{j=1}^b {\mc O}_i^* \cap \big[ W_{\kappa_i}^u \big].
\eeq
Here $\# {\mc M}_{\uds P}( {\mc C}, B, \uds\kappa) \in {\mb Q}$ is the virtual cardinality of Theorem \ref{thm31}, and ${\mc O}_i^* \in H_{\rm mid} ( Q_i^{a_i}; {\mb Q} )^{{\mb Z}_r}$ is the image of ${\mc O}_i$ under \eqref{eqn32} and the $\cap$ is the intersection \eqref{eqn33}. By linear extension, this provides a correlation function which {\it a priori} depends on the choice of the strongly regular perturbation $\uds P$. Eventually we will prove the independence of $\uds P$, thus \eqref{eqn34} is well-defined.

\subsection{Review the linear theory}\label{subsection32}

Now we start to rigorously define the virtual cardinality $\# {\mc M}_{\uds P}({\mc C}, B, \uds \kappa)$, or, more generally, a virtual cycle on ${\mc M}_{\uds P}({\mc C}, B, \uds\kappa)$. For the first step we review the linear Fredholm theory for the perturbed gauged Witten equation over ${\mc C}$. 

As before, ${\mc C} = (\Sigma, {\bf z}, {\bf m}, L', \phi')$ is a smooth $r$-spin curve with first $b$ markings broad and last $n$ markings narrow. Let $\tau>0$, $k \geq 0$. Let $E \to \Sigma^*$ be a vector bundle. Let $W_\tau^{k, p}(\Sigma^*, E)$ be the space of $W^{k, p}_{\rm loc}$ sections such that its restriction on each cylindrical end is of class $W^{k, p}_\tau$. Here the Sobolev norms is taken with respect to some fixed choice of connection on $E$. We drop $\Sigma^*$ from the notation in this section and abbreviate the space by $W_\tau^{k, p}(E)$. When $k=0$ we also denote it by $L_\tau^p(E)$. 

\begin{defn}\label{defn34} Given an equivariant curve class $B$, $\uds \kappa$, and a small $\tau>0$, we define
\beqn
{\mc B} := {\mc B}_\tau^{1, p} ( B, {\uds \kappa} ) \subset {\mc A}_\tau^{1, p} \times  W_{\rm loc}^{1, p} ( \Sigma^*, Y ) \times \uds{\bf Fr},
\eeqn
which consists of tuples ${\bm u} = (A, u, \uds\phi)$ satisfying the following conditions.
\begin{enumerate}
\item For each broad ${\rm z}_i$, $u$ has the asymptotic limit $\kappa_i$ at ${\rm z}_i$ (see Subsection \ref{subsection25}).

\item For each narrow ${\rm z}_i$, $u_i^{\bullet}$ is asymptotic to some $\tilde{p}_i  = (\star, p_i) \in \tilde{X}_i$.

\item The above two conditions implies that $u$ extends to an orbifold section over ${\mc C}$ and we require that the section represents the equivariant curve class $B$.
\end{enumerate}
The rate of convergence of $u_i^{\bullet}$ to its limit is in $W_\tau^{1, p}$.
\end{defn}

${\mc B}$ is a Banach manifold. A general point is denoted as ${\bm u} = (A, u, \uds \phi)$. Define a Banach vector bundle ${\mc E}\to {\mc B}$ whose fibre over ${\bm u}$ is 
\beqn
{\mc E}_{\bm u}^-:= L_\tau^p(\Sigma^*, \Lambda^{0,1} \otimes u^* T^\bot Y) \oplus L_\tau^p( \Sigma^*, {\mf g}).
\eeqn

\subsubsection{Deformation theory and gauge fixing}

Let ${\mc G}_\tau$ be the set of $W^{2, p}_{\rm loc}$ maps from $\Sigma^*$ to $G$ that are asymptotic to the identity at all punctures, and such that the converges at a marking is in $W^{2, p}_\tau$. Then ${\mc G}_\tau$ acts on ${\mc B}$ on the first two components $A$ and $u$ by gauge transformations. Given ${\bm u} \in {\mc B}$, the linearizations of the gauge transformation and the gauged Witten equation induce the deformation complex
\beqn
\xymatrix{{\bf Lie} {\mc G}  \ar[r] & T_{\bm u} {\mc B} \ar[r] & {\mc E}_{\bm u}^-.}
\eeqn
A usual way of considering problems with gauge symmetry is to take special slices of the action by the group of gauge transformations. In our case the gauge group is abelian, so one can transform all connections to a common slice. More precisely, choose from now on a smooth reference connection $A_0 \in {\mc A}_\tau^{1, p}$. Let $-k$ be the index of the operator $\Delta: W_\tau^{2, p}({\mf g}) \to L_\tau^p({\mf g})$. Choose $s_1, \ldots, s_k\in C_0^\infty( \Sigma^*, {\mf g}) \subset L_\tau^p(\Sigma, {\mf g})$ such that
\beqn
{\bf Image} \big( \Delta: W_\tau^{2, p}({\mf g}) \to L_\tau^p({\mf g}) \big)   + {\bf Span} \big\{ s_1, \ldots, s_k \big\} =  L_\tau^p({\mf g}).
\eeqn
Denote $\Lambda_{GF} = {\bf Span}\{s_1, \ldots, s_k\}$ and denote the following composition by $\ov{d^*}$.
\beqn
\xymatrix{ T{\mc A}_\tau^{1, p} \ar[r]^{d^*} & L_\tau^p({\mf g}) \ar[r] & \displaystyle \frac{L_\tau^p({\mf g})}{\Lambda_{GF}}}.
\eeqn
$A$ is said to be in Coulomb gauge (relative to $A_0$) if $\ov{d^*} (A - A_0) = 0$. From the definition it is easy to see that any connection in ${\mc A}_\tau^{1, p}$ can be transformed via a gauge transformation in ${\mc G}_\tau$ to a unique connection in Coulomb gauge relative to $A_0$.

Incorporating the gauge fixing condition, we define a section of a bundle over ${\mc B}$ which describes the deformation theory of the gauged Witten equation. Define a Banach space bundle ${\mc E} \to {\mc B}$ whose fibre over ${\bm u}  = (A, u, {\uds \phi}) \in {\mc B}$ is 
\beqn
{\mc E}_{\bm u} = L_\tau^p(\Lambda^{0,1} \otimes u^* T^\bot Y) \oplus L_\tau^p({\mf g}) \oplus \big( L_\tau^p( {\mf g}) /\Lambda_{GF} \big).
\eeqn
Denote the last summand by $\ov{L_\tau^p( {\mf g}) }$. Define ${\mc F}: {\mc B} \to {\mc E}$ by ${\mc F}({\bm u}) = ({\mc W}({\bm u}), {\mc V}({\bm u}))$ where
\beqn
{\mc W}({\bm u} ) = \ov\partial_A u + \nabla \tilde{\mc W}_{A, \uds\phi} (u)\in L_\tau^p(\Lambda^{0,1} \otimes u^* T^\bot Y),
\eeqn
\beqn
{\mc V}({\bm u} ) = \big( * F_A + \mu(u),\ \ov{d^*} (A - A_0) \big) \in L_\tau^p({\mf g}) \oplus \ov{ L_\tau^p({\mf g}) }.
\eeqn
The linearized operator at ${\bm u}$ is a bounded linear operator $D_{\bm u} {\mc F}: T_{\bm u} {\mc B} \to {\mc E}_{\bm u}$.

\begin{prop}\hfill
\begin{enumerate}

\item ${\mc B}$ is a Banach manifold, whose tangent space at ${\bm u} = (A, u, {\uds \phi})$ is isomorphic to
\beq\label{eqn36}
T_{\bm u} {\mc B}\simeq T {\mc A}_\tau^{1, p} \oplus W_\tau^{1, p} \big( u^* T^\bot Y \big) \oplus {\mb C}^n \oplus {\mb R}^b.
\eeq

\item ${\mc G}_\tau$ acts smoothly on ${\mc B}$ such that the isomorphism \eqref{eqn36} is equivariant in a natural way. When there is at least one puncture on ${\mc C}$, the ${\mc G}_\tau$-action is free.

\item For each curve class $B$, there exists $\tau = \tau_B > 0$ satisfying the following condition. For any bounded solution $(A, u, {\uds \phi})$ to the perturbed gauged Witten equation over ${\mc C}$ with curve class $B$, there exist a gauge transformation $g\in {\mc G}$ and ${\uds \kappa} = (\kappa_1, \ldots, \kappa_b)$ such that $g^* (A, u, {\uds \phi}) \in {\mc B}_\tau^{1, p} (B, {\uds \kappa})$.

\item ${\mc E} \to {\mc B}$ is a Banach space bundle and ${\mc F}: {\mc B} \to {\mc E}$ is a smooth section. For any ${\bm u} \in {\mc B}$, the linearized operator $D_{\bm u} {\mc F}:T_{\bm u} {\mc B} \to {\mc E}_{\bm u}$ is Fredholm.

\item The natural map ${\mc F}^{-1}(0) \to {\mc M}_{\uds P}^*( {\mc C}, B, \uds\kappa)$ is a homeomorphism. 
\end{enumerate}

\end{prop}

\begin{rem}
This proposition has been almost proved in \cite{Tian_Xu} so here we only explain what was missing there. For (i), the difference from the case of \cite{Tian_Xu} is that we shall vary the framing at broad punctures. This gives the additional ${\mb R}^b$-factor in \eqref{eqn36}. For (ii), we know that an automorphism of a connection is a constant gauge transformation. When there are punctures, ${\mc G}$ contains no constants other than the identity.
\end{rem}

\subsection{The orientation of $D_{\bm u} {\mc F}$}\label{subsection33}

(See Section \ref{section6} for related discussions.) By an orientation of a Fredholm operator $F: E \to E'$ we mean an orientation of the determinant line $\det F := \det {\bf Ker} F \otimes (\det {\bf Coker} F)^\vee$. An orientation of a continuous family of Fredholm operators $F_x: E_x \to E_x'$ for $x \in N$ is a continuous trivialization of the determinant line bundle $\det F \to N$. Hence if there is a homotopy of Fredholm operators $F_{x, t}: E_x \to E_x'$, $t \in [0, 1]$, then the orientability problem of the family $F_{x, 0}$ is equivalent to that of $F_{x, 1}$. 

In our case, consider the family of linearized operators $D_{\bm u} {\mc F}: T_{\bm u} {\mc B} \to {\mc E}_{\bm u}$ for ${\bm u} \in {\mc B}$. The first component of the gauged Witten equation also gives another family of operators $D_{\bm u} {\mc W}: W_\tau^{1, p}(u^* T^\bot Y ) \to L_\tau^p(\Lambda^{0, 1} \otimes u^* T^\bot Y )$.

\begin{lemma}
The family $\{D_{\bm u} {\mc F}\}_{{\bm u}\in {\mc B}}$ is oriented if and only the family $\{ D_{\bm u} {\mc W}\}_{{\bm u} \in {\mc B}}$ is oriented, and there is a canonical identification between their orientations.
\end{lemma}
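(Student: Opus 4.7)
The strategy is to homotope $\mathcal{D}_{\mathcal{X}}\mathcal{F}$ through a continuous family of Fredholm operators to a block-diagonal operator, one of whose blocks is precisely $\mathcal{D}_{\mathcal{X}}\mathcal{W}$, and then argue that the determinant lines of the remaining blocks are canonically trivialized.

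First, relative to the splitting \eqref{eqn37} of $T_{\mathcal{X}}\mathcal{B}$ and the splitting of $\mathcal{E}|_{\mathcal{X}}$ into Witten, vortex, and gauge-fixing components, $\mathcal{D}_{\mathcal{X}}\mathcal{F}$ has a block form
\beqn
\mathcal{D}_{\mathcal{X}}\mathcal{F} \;=\; \begin{pmatrix} D_{WW} & D_{WA} & D_{Wf} \\ D_{VW} & D_{VA} & 0 \\ 0 & D_{GA} & 0 \end{pmatrix},
\eeqn
where the columns correspond to $(\xi,\alpha,(v,\rho))\in \check{W}_\tau^{1,p}(u^*T\wt{Y}^\bot)\oplus T\mathcal{A}_\tau^{1,p}\oplus(\mathbb{C}^n\oplus\mathbb{R}^b)$, the rows to the three target factors, $D_{WW}=\mathcal{D}_{\mathcal{X}}\mathcal{W}$, $D_{VA}\alpha= *d\alpha$, and $D_{GA}\alpha=\overline{d^*}\alpha$. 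I would introduce a homotopy
\beqn
\mathcal{D}^{(s)}=\begin{pmatrix} D_{WW} & s\,D_{WA} & s\,D_{Wf} \\ s\,D_{VW} & D_{VA} & 0 \\ 0 & D_{GA} & 0 \end{pmatrix},\qquad s\in[0,1],
\eeqn
and verify that $\mathcal{D}^{(s)}$ is Fredholm for every $s$ by checking that the perturbation is lower order (bounded multiplication and $0$-th order symbol terms) relative to the two Cauchy--Riemann-type blocks, so the Fredholm property (and the index) is preserved. Since the determinant line bundle is invariant under homotopy through Fredholm operators, it suffices to orient $\mathcal{D}^{(0)}$.

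At $s=0$ the operator is block-diagonal (treating the framing factor as a separate block that maps to $0$):
\beqn
\mathcal{D}^{(0)}=D_{WW}\;\oplus\;\mathcal{D}_{\mathrm{conn}}\;\oplus\;0_{\,\mathbb{C}^n\oplus\mathbb{R}^b},
\eeqn
where $\mathcal{D}_{\mathrm{conn}}\colon T\mathcal{A}_\tau^{1,p}\to L_\tau^p(\mathfrak{g})\oplus L_\tau^p(\mathfrak{g})/\Lambda_{ob}$, $\alpha\mapsto(*d\alpha,\overline{d^*}\alpha)$. Hence there is a canonical isomorphism
\beqn
\det\mathcal{D}_{\mathcal{X}}\mathcal{F}\;\cong\;\det\mathcal{D}_{\mathcal{X}}\mathcal{W}\;\otimes\;\det\mathcal{D}_{\mathrm{conn}}\;\otimes\;\det\!\bigl(\mathbb{C}^n\oplus\mathbb{R}^b\bigr).
\eeqn
The framing contribution $\mathbb{C}^n\oplus\mathbb{R}^b$ is canonically oriented: each $\mathbb{C}$-factor has its complex orientation (coming from $T_{p_j}\wt{X}_j$ at narrow punctures), and each real line $\mathbb{R}$ at a broad puncture is the Lie algebra of the circle of framings $\mathrm{Fr}(P_1,z_j)$, fixed by the orientation of $G_1$.

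The key point is then the orientation of $\mathcal{D}_{\mathrm{conn}}$. Because $G=G_0\times G_1$ is abelian, $\mathfrak{g}\otimes\mathbb{C}\cong\mathfrak{g}\oplus i\mathfrak{g}$ carries a natural complex structure $\mathbf{j}$, and under the identification $\mathfrak{g}\otimes\mathbb{C}\cong\mathbb{C}^2$ the pair $(*d,\overline{d^*})$ on $\mathfrak{g}$-valued $1$-forms becomes, up to a compact perturbation from the finite-dimensional obstruction $\Lambda_{ob}$, the real operator underlying the Dolbeault operator $\overline\partial\oplus\overline\partial^*$ on $\mathbb{C}^2$-valued $1$-forms (in the appropriate weighted spaces). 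This identification is complex linear, so $\det\mathcal{D}_{\mathrm{conn}}$ has a canonical complex orientation; the finite-dimensional quotient by $\Lambda_{ob}$ contributes $\det(\Lambda_{ob})^{\vee}$, which we orient once and for all by choosing an ordered basis of $\Lambda_{ob}$. Combining the three canonical trivializations yields the canonical isomorphism $\det\mathcal{D}_{\mathcal{X}}\mathcal{F}\cong\det\mathcal{D}_{\mathcal{X}}\mathcal{W}$ and proves the lemma.

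The main obstacle I expect is step (ii): verifying that all intermediate operators $\mathcal{D}^{(s)}$ are genuinely Fredholm with constant index, i.e.\ that the cross-terms $D_{WA},D_{Wf},D_{VW}$ are relatively compact with respect to the diagonal Cauchy--Riemann-type pieces in the weighted Sobolev setting over the punctured surface, with the correct asymptotic behaviour at narrow and broad ends. This requires the same weighted-analysis estimates already used to establish Theorem \ref{thm310}, applied uniformly in the homotopy parameter~$s$. Once this is checked, the identification of orientations reduces to the complex-linearity statement above and is functorial in $\mathcal{X}\in\mathcal{B}$, giving a continuous isomorphism of determinant line bundles.
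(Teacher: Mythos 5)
Your proposal follows essentially the same route as the paper: drop the finite-rank pieces, linearly homotope the off-diagonal blocks to zero (the paper writes the same homotopy with parameter $a$ in place of your $s$), and observe that at the end-point the operator is block-diagonal with one block equal to $\mathcal{D}_{\mathcal{X}}\mathcal{W}$ and a connection block $\beta\mapsto(*d\beta,\overline{d^*}\beta)$. The only substantive difference is how you handle that connection block: the paper simply notes that this operator does not depend on $\mathcal{X}$, so over $\mathcal{B}$ it contributes a (constant, hence trivial) line bundle and a single once-and-for-all choice of sign gives the canonical identification; you instead argue for a preferred orientation of that block via a Dolbeault-type complex structure plus a choice of ordered basis for $\Lambda_{ob}$. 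Your extra argument is not wrong, but it is more than the lemma requires — for the "if and only if" and the identification of determinant line bundles over $\mathcal{B}$, constancy in $\mathcal{X}$ already suffices, and your claimed complex orientation still reduces to an auxiliary choice on $\Lambda_{ob}$, so it is not canonically more natural than simply fixing a sign.
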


\begin{proof}
The finite rank part of $D_{\bm u} {\mc F}$ coming from the last two summands of \eqref{eqn36} doesn't affect the orientability problem. So one only needs to consider following operator at $a = 1$.
\beqn
\left[ \begin{array}{c} \beta \\ v
\end{array}\right] \mapsto \left[ \begin{array}{c} D_{\bm u} {\mc W} (v) +  a D_{\bm u}'{\mc W}(\beta) \\ * d\beta + a \nu d\mu(u) \cdot v  \\
\ov{d^*} \beta  \end{array}\right].
\eeqn
Here $\beta$ and $v$ are infinitesimal deformations of $A$ and $u$, respectively, $a \in [0, 1]$ and $D_{\bm u}' {\mc W}$ is the partial derivative of ${\mc W}$ in the $\beta$ direction. By varying $a$ from $1$ to $0$, this family of operators remain Fredholm and hence the orientation is reduced to the case of $a = 0$. Moreover, the operator $\beta \mapsto (* d\beta, \ov{d^*} \beta)$ is independent of ${\bm u}$, hence is naturally orientable. Therefore, the orientability is reduced to the family $\{ D_{\bm u} {\mc W}\}_{{\bm u}\in {\mc B}}$. \end{proof}

Next, notice that ${\mc B}$ is not a product, but a fibre bundle: the condition on the section $u$ depends on the connection $A$ and the framing ${\uds \phi}$. We denote by ${\mc B}_{A, {\uds \phi}} ({\uds \kappa})\subset {\mc B}({\uds \kappa})$ be the subset consisting of $(A, u, {\uds \phi})$ with fixed $(A, {\uds \phi})$. Moreover, since $A$ lies in a contractible space, which doesn't affect the orientability. Hence we omit the dependence of $A$ and denote by ${\mc B}({\uds \phi}, {\uds \kappa})$ be the space of all such $(A, u, {\uds \phi})$'s. Then the orientability of $\{D_{\bm u}{\mc W}\}_{{\bm u} \in {\mc B}( {\uds \phi}, {\uds \kappa})}$ is similar to the case of ordinary Gromov--Witten and Floer theory. The orientation can be chosen in a coherent way in the sense of \cite{Floer_Hofer_orientation}. More precisely, it means the following. For each critical point $\kappa_i \in {\bf Crit} \big[ \tilde{W}_i|_{\tilde{X}_{\gamma_i}} \big]$, one can choose an orientation on the unstable manifold $W^u_{\kappa_i}$. Then for two critical points $\kappa_i, \kappa_i'$, choose a curve $l_i: {\mb R} \to \tilde{X}_i$ such that $l_i|_{(-\infty, -1]} = \kappa_i$ and $l_i|_{[1, +\infty)} = \kappa_i'$. Define
\beqn
\tilde{l}_i: {\mb R} \times S^1 \to \tilde{X},\ \tilde{l}_i (s, t) = e^{-m_i \frac{\i t}{r}} l_i (s).
\eeqn
Then by gluing $u \in {\mc B}( \uds\phi, {\uds \kappa})$ with $\tilde{l}_i$ in a consistent way, one obtains a continuous map
\beqn
\uds l: {\mc B}({\uds \phi}, {\uds \kappa}) \to {\mc B}({\uds \phi}, {\uds \kappa}').
\eeqn
Let the determinant line bundle be ${\mc L}\to {\mc B}({\uds \varphi}, {\uds \kappa})$ and ${\mc L}' \to {\mc B}({\uds \varphi}, {\uds \kappa}')$ respectively.

On the other hand, the choice of orientations on the unstable manifolds of $\kappa_i$ and $\kappa_i'$ induces an orientation of the operator
\beqn
D_{l_i}: W^{1, p}({\mb R}, l_i^* T\tilde{X}_i) \to L^p({\mb R}, l_i^* T\tilde{X}_i),\ D_{l_i}(\xi) = \nabla_s \xi + \nabla^2 \tilde{W}_i (l_i) \cdot \xi
\eeqn
and hence an orientation of the linearization of the soliton equation along $\tilde{l}_i$ (for details see Section \ref{section6}). The gluing process induces an isomorphism ${\uds l}{}_*: {\mc L} \to {\uds l}^* {\mc L}'$ of the determinant line bundles (well-defined up to homotopy).

\begin{defn}
An orientation of the family $\{ D_{\bm u} {\mc W}\}_{{\bm u} \in {\mc B}({\uds \phi}, {\uds \kappa})}$ and an orientation of the family $\{D_{{\bm u}'}{\mc W}\}_{{\bm u}' \in {\mc B}({\uds \phi}, {\uds \kappa}')}$ are coherent with respect to the orientations on the unstable manifolds of all $\kappa_i$ and $\kappa_i'$, if they are consistent with the isomorphism ${\uds l}{}_*$.
\end{defn}

The following proposition follows from the well-known results of Floer--Hofer \cite{Floer_Hofer_orientation}.
\begin{prop}
Fix $\uds\phi$ and $\uds \kappa$.
\begin{enumerate}
\item The family $\{ D_{{\bm u}}{\mc W}\}_{ {\bm u} \in {\mc B}({\uds \phi}, {\uds \kappa})}$ is orientable.

\item If we choose orientations on the unstable manifolds $W^u_{\kappa_i}$ for all $i$, then there is a canonically induced orientation on $\{D_{\bm u} {\mc W}\}_{ {\bm u} \in {\mc B}({\uds \phi}, {\uds \kappa})}$. If we change the orientation on one $W^u_{\kappa_i}$, then the orientation of $\{ D_{\bm u}{\mc W}\}_{{\bm u} \in {\mc B}({\uds \phi}, {\uds \kappa})}$ changes.

\item Let $\uds \kappa' = (\kappa_1', \ldots, \kappa_b')$ be another set of critical points and choose orientations on $W^u_{\kappa_i}$ and $W^u_{\kappa_i'}$ for all $i$. Then the induced orientation on $\{ D_{{\bm u}} {\mc W}\}_{{\bm u} \in {\mc B}({\uds \phi}, {\uds \kappa})}$ and the induced orientation on $\{ D_{{\bm u}'} {\mc W}\}_{{\bm u}'\in {\mc B}({\uds \phi}, {\uds \kappa}')}$ are coherent.
\end{enumerate}
\end{prop}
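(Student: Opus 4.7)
The plan is to mimic the Floer-Hofer orientation scheme from \cite{Floer_Hofer_orientation}, adapted to the Witten operator on a punctured orbicurve. The starting observation is that ${\mc D}_{\mc X}{\mc W}$ is of Cauchy-Riemann type with an asymptotic operator $A_{\kappa_j}$ on $L^2(S^1, T_{\wt\kappa_j}\wt X)$ at each broad puncture $z_j$, and a hyperbolic asymptotic operator at each narrow puncture (where the exponential weight $\tau$ ensures Fredholmness). Because $\wt W_j|_{\wt X_j}$ is Morse at $\kappa_j$, each $A_{\kappa_j}$ is invertible, so $\det {\mc D}_{\mc X}{\mc W}$ is a well-defined real line bundle over ${\mc B}({\bm \varphi},{\bm \kappa})$.

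For part (1), I would homotope ${\mc D}_{\mc X}{\mc W}$ through Fredholm operators to its complex-linear part $\ov\partial_A$, keeping the asymptotic operators invertible throughout (possible since the space of allowed asymptotic operators with no kernel is connected via the relevant homotopies). The complex-linear limit carries a canonical complex orientation on its determinant line, and since the zeroth-order deformation varies continuously with ${\mc X}$, the canonical orientation pulls back to a continuous trivialization of $\det {\mc D}_\cdot {\mc W}$, proving orientability.

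For part (2), the plan is to follow the capping construction. For each broad puncture $z_j$ and each critical point $\kappa_j$, fix once and for all an orientation on $W^u_{\kappa_j}$ and construct a capping operator $D^{\rm cap}_{\kappa_j}$ on the half-cylinder $(-\infty,0]\times S^1$, whose asymptotic operator at $-\infty$ equals $A_{\kappa_j}$ and whose kernel is canonically identified with $T_{\kappa_j}W^u_{\kappa_j}$ (with vanishing cokernel). The chosen orientation of $W^u_{\kappa_j}$ then orients $\det D^{\rm cap}_{\kappa_j}$. Gluing these capping operators to ${\mc D}_{\mc X}{\mc W}$ at all broad punctures yields an operator $\wh{\mc D}_{\mc X}$ which is insensitive to broad asymptotic data, to which the canonical complex orientation from part (1) applies. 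The standard determinant gluing isomorphism
\beqn
\det \wh{\mc D}_{\mc X} \cong \det {\mc D}_{\mc X}{\mc W} \otimes \bigotimes_{j=1}^b \det D^{\rm cap}_{\kappa_j}
\eeqn
then transfers the orientation to $\det {\mc D}_{\mc X}{\mc W}$, and reversing one factor's orientation manifestly flips the global sign.

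For part (3), coherence follows from homotopy-associativity of determinant gluing. The map ${\bm l}_*$ is induced by inserting the soliton-linearization operator along $\wt l_j$ between $D^{\rm cap}_{\kappa_j}$ and $D^{\rm cap}_{\kappa_j'}$; the standard associativity statement from \cite{Floer_Hofer_orientation}, together with the fact that the gradient flow trajectory $l_j$ inside $\wt X_j$ provides the natural comparison between the orientations of $W^u_{\kappa_j}$ and $W^u_{\kappa_j'}$, yields the required compatibility. The main obstacle I anticipate is verifying that the capping construction is canonical in families: a priori $D^{\rm cap}_{\kappa_j}$ depends on the framing $\varphi_j \in {\it Fr}(P_1,z_j)$ and on a gluing neck length, and one must check that the induced trivialization of $\det {\mc D}_\cdot{\mc W}$ is independent of these choices up to homotopy. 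Since ${\it Fr}(P_1,z_j)$ is a circle and the neck length lives in a contractible interval, the verification reduces to the standard determinant gluing computation, technical but free of new ideas beyond the references invoked.
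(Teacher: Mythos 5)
The paper's own ``proof'' of this proposition is a one-line citation to \cite{Floer_Hofer_orientation}, so your sketch is not so much a different route as a welcome expansion of what that citation is invoking: the Floer--Hofer capping and determinant-gluing scheme. Parts (2) and (3) of your proposal are exactly the standard coherent-orientation machinery---capping operators $D^{\rm cap}_{\kappa_j}$ with kernel identified with $T_{\kappa_j}W^u_{\kappa_j}$, the determinant gluing isomorphism, and homotopy-associativity for comparing ${\bm \kappa}$ to ${\bm \kappa}'$---and the way you track the sign change when reorienting a single $W^u_{\kappa_j}$ is correct. Your closing remark about checking the construction is canonical over ${\it Fr}(P_1,z_j)$ and the neck length is a real point that the paper handles separately (see the discussion after the proposition), and you identify it appropriately.

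The one place your argument has a genuine gap is part (1). You propose to homotope ${\mc D}_{\mc X}{\mc W}$ directly to its complex-linear part $\ov\partial_A$ ``keeping the asymptotic operators invertible throughout.'' This cannot quite be done at a broad puncture: the zeroth-order term there is $\nabla^2 \wt W_j$, the real Hessian of a holomorphic function, which is complex-\emph{antilinear}. Its complex-linear part is zero, so the complex-linearized asymptotic operator degenerates to $J\partial_t$ (plus the monodromy), which has nontrivial kernel. Hence the naive homotopy $S \rightsquigarrow S_c$ leaves the Fredholm regime at broad punctures, and the ``space of allowed asymptotic operators with no kernel'' is not obviously path-connected in the way you need. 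The standard fix---implicit in your part (2)---is to first glue in the caps $D^{\rm cap}_{\kappa_j}$, obtaining $\wh{\mc D}_{\mc X}$ with only narrow (exponentially weighted, hence nondegenerate) ends, and apply the complex-linearization argument to $\wh{\mc D}_{\mc X}$. Orientability of $\det {\mc D}_\cdot {\mc W}$ then follows from the determinant gluing isomorphism rather than from a direct homotopy. So part (1) should be presented as a consequence of the construction in part (2), not as an independent first step.
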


Lastly, we need to show that the orientation bundle over the space of all ${\uds \phi}$ is trivial. It suffices to consider the variation of one $\phi_i$. It is easy to see that changing $\phi_i$ to $e^{\i \theta} \phi_i$ is the same as fixing $\phi_i$ while changing $(a_i, F_i)$ to $(e^{\i r \theta} a_i, F_i \circ e^{-\i \theta})$. On the other hand, the orientation of $\{ D_{\bm u} {\mc W}\}_{{\bm u} \in {\mc B}({\uds \phi}, {\uds \kappa})}$ only depends on the orientation on $W^u_{\kappa_i}$. As we vary $\theta$, the critical point $\kappa_i = (p, x_i)$ is changed to $e^{\i \theta} \kappa_i:= (e^{- \i r \theta} p, e^{\i \theta} x_i)$. Moreover, $e^{\i \theta}$ will map the unstable manifold of $\kappa_i$ to that of $e^{\i \theta} \kappa_i$. It is also easy to see that as $\theta$ moves from $0$ to $2\pi$, the orientation on the unstable manifold doesn't alter. Therefore, the orientation bundle is trivial over the space of ${\uds \phi}$.

\subsection{Virtual cycle on ${\mc M}_{\uds P}({\uds \kappa})$}\label{subsection34}

Since ${\uds P}$ is strongly regular, ${\mc M}_{\uds P}({\uds \kappa}) = \ov{\mc M}_{\uds P}({\uds \kappa})$ because there is no nontrivial solitons. Hence ${\mc M}_{\uds P}({\uds \kappa})$ is compact with respect to the weak topology. We remark that the weak topology on ${\mc M}_{\uds P}({\uds \kappa})$ coincides with the topology induced from ${\mc B}$. This is due to the common feature of $\epsilon$-regularity in elliptic theories; in our case, it was proved in \cite[Section 4]{Tian_Xu} that when the energy on a cylindrical end is sufficiently small, the energy density should decay exponentially. Hence if a sequence in ${\mc M}_{\uds P}({\uds \kappa})$ converges in the weak topology, since there cannot be any energy escape at infinity to form a nontrivial soliton in the limit, the sequence also converges strongly. Abbreviate ${\mc M}_{\uds P}({\uds \kappa})$ by ${\mc M}$. The remaining of this subsection is devoted to the construction of a virtual orbifold atlas on ${\mc M}$.

Since the moduli spaces we considered here are compact, it is possible to construct the virtual cycle using simpler argument, For example, one can cover the whole moduli space by a single chart. However, we provide a more general argument where the atlas has charts indexed by a partially ordered set, to illustrate how to construct virtual cycle in more general cases.

\subsubsection{The local model}

The first step is to construct a local chart around each point of ${\mc M}$, represented by certain ${\bm u} \in {\mc F}^{-1}(0)$. If $\Sigma$ has at least one puncture, then the ${\mc G}$-action on ${\mc A}$ is free and there is no nontrivial automorphisms of solutions to ${\mc F}({\bm u}) = 0$. Then ${\mc M} = {\mc F}^{-1}(0)$. If $\Sigma$ has no puncture (see Remark \ref{rem311} below), then ${\bm u}$ possibly has a nontrivial finite automorphism group $\Gammait \subset G$ and ${\mc M} \simeq {\mc F}^{-1}(0)/ G$. In any case, the linearization $D_{\bm u} {\mc F}$ is $\Gammait$-equivariant. By compactness of ${\mc F}^{-1}(0)$, there are only finitely many different automorphism groups. Let $\tilde\Gammait \subset G$ be the finite group generated by these automorphisms groups. 

We would like to find an obstruction space $\tilde E_{\bm u} \subset {\mc E}_{\bm u}$, which is a $\Gammait_{\bm u}$-invariant finite dimensional subspace such that 
\beq\label{eqn37}
{\bf Image} ( D_{\bm u} {\mc F}) + \tilde E_{\bm u} = {\mc E}_{\bm u}.
\eeq
In the approach of \cite{Fukaya_Ono}, if such an obstruction space is chosen, then one can extend it to nearby objects. In the approach of \cite{Li_Tian}, we can choose the obstruction space in a more global fashion so that it can be extended to arbitrary objects (at least in the same stratum). Although there is no essential difference, but the choice will affect the remaining steps of the virtual cycle construction. In the following we will follow the approach of \cite{Li_Tian}.

Let $V_c'$ be the space of compactly supported smooth sections of $\pi^* \Omega_{\Sigma^*}^{0,1} \otimes T^\bot Y$, where $\pi: Y \to \Sigma^*$ is the projection. Elements of $G$, viewed as constant gauge transformations, acts on $V_c'$ through the gauge action on $Y$. Let $V_c''$ be the space of compactly supported smooth sections in $L_\tau^p({\mf g}) \oplus \ov{L_\tau^p({\mf g})}$, i.e., the target space of the vortex equation plus the gauge fixing condition.

\begin{lemma}
There exists a finite dimensional subspace $V_{\bm u}' \subset V_c'$ and a finite dimensional subspace $\tilde E_{\bm u}'' \subset V_c''$ such that if we denote $\tilde E_{\bm u}':= u^* (V_{\bm u}') \subset {\mc E}_{\bm u}'$ and $\tilde E_{\bm u} = \tilde E_{\bm u}' \oplus \tilde E_{\bm u}''$, then \eqref{eqn37} holds. Moreover, when $\Sigma$ has no punctures, $V_{\bm u}'$ can be chosen to be $\tilde \Gammait$-invariant.
\end{lemma}

\begin{proof}
It suffices to prove that there exists $V_{\bm u}' \subset V_c'$ such that the partial derivative of ${\mc W}$ at ${\bm u}$ in the $u$-direction is transversal to $u^* V_{\bm u}'$. This partial derivative is a Cauchy--Riemann type operator, which gives a Fredholm operator $D_{\bm u} {\mc W}$. Then there exists a finite dimensional subspace $\tilde E_{\bm u}' \subset L_\tau^p(\Sigma^*, \Lambda^{0,1} \otimes u^* T^\bot Y)$ which is generated by sections with compact support and which is transversal to the image of $D_{\bm u} {\mc W}$. Since $u$ is embedded into the total space $Y$, one can extend $\tilde E_{\bm u}'$ to a globally defined subspace $V_{{\bm u}}'$ of compactly supported sections on $Y$. Lastly, when $\Sigma$ has no punctures and ${\bm u}$ may have a nontrivial automorphism group $\Gammait_{{\bm u}} \subset G$, we enlarge $V_{{\bm u}}'$ to $\Gammait_{{\bm u}} V_{{\bm u}'}$, which is still transverse to the image of $D_{{\bm u}} {\mc W}$. 
\end{proof}

For $e = (e', e'') \in \tilde E_{\bm u}' \oplus  \tilde E_{\bm u}''$ and ${\bm u}' = (A', u', \uds\varphi') \in {\mc B}$, denote $e({\bm u}') = (e'({\bm u}'), e'')$, where $e'({\bm u}') \in \Gamma(\Sigma^*, \Lambda^{0,1} \otimes (u')^* T^\bot Y)$ is the pullback of $e'$ via $u'$. Then one can consider the following section
\begin{align*}
&\ {\mc F}_{\bm u}: \tilde E_{\bm u} \times {\mc B} \to {\mc E},\ &\ {\mc F}_{\bm u}(e, {\bm u}') = e({\bm u}') + {\mc F}({\bm u}').
\end{align*}
By the transversality condition of $E_{\bm u}$, there exists a $\Gammait_{\bm u}$-invariant neighborhood ${\mc U}_{\bm u} \subset {\mc B}$ of ${\bm u}$ such that $\tilde{U}_{\bm u}: = {\mc F}_{\bm u}^{-1}(0) \cap (\tilde E_{{\bm u}} \times {\mc U}_{\bm u})$ is a smooth manifold. We shrink ${\mc U}_{\bm u}$ such that
\beqn
{\mc U}_{\bm u} \cap \gamma {\mc U}_{\bm u} = \emptyset,\ \forall \gamma \in \tilde \Gammait \setminus \Gammait_{\bm u}.
\eeqn
Denote $U_{\bm u} = \tilde{U}_{\bm u}/ \Gammait_{\bm u}$ which is an orbifold. By abuse of notation, denote $E_{\bm u} = (\tilde{U}_{\bm u} \times \tilde E_{\bm u})/ \Gammait_{\bm u}$ as an orbibundle over $U_{\bm u}$. There is the natural section $S_{\bm u}: U_{\bm u} \to E_{\bm u}$ induced by the projection $\tilde {\mc U} \times \tilde E_{\bm u} \to \tilde E_{\bm u}$. $S_{\bm u}^{-1}(0)$ naturally embeds into ${\mc M}_{\uds P}(B)$, via a map $\psi_{\bm u}$ whose image $F_{\bm u}$ is an open neighborhood of ${\bm u}$ in ${\mc M}_{{\uds P}}(B)$. This gives a local chart $C_{{\bm u}} = (U_{\bm u}, E_{\bm u}, S_{\bm u}, \psi_{\bm u}, F_{\bm u})$.

\subsubsection{The local charts}

Choose finitely many ${\bm u}_i \in {\mc M}$, $i=1, \ldots, N$ such that the collection of footprints $\{ F_{{\bm u}_i} \}$ in the charts constructed above forms an open cover of ${\mc M}$. Now we construct a collection of charts $C_i = ( U_i, E_i, S_i, \psi_i, F_i)$ still indexed by elements in $\{ 1, \ldots, N\}$ such that there are coordinate changes among them. For each $i$, define 
\begin{align*}
&\  \Gammait_i: = \langle \Gammait_{{\bm u}_1}, \ldots, \Gammait_{{\bm u}_i}\rangle \subset \tilde \Gammait \subset G,\ &\ \tilde E_i:= \bigoplus_{1 \leq l \leq i} \tilde E_{{\bm u}_i}.
\end{align*}
An element of $\tilde E_i$ is denoted by $\uds e_i = (e_1, \ldots, e_i)$. Since each $\tilde E_{{\bm u}_i}$ is $\tilde \Gammait$-invariant, $\tilde E_i$ is acted by $\Gammait_i$. Moreover, for $i \leq j$, there is a natural inclusion $\tilde E_i \hookrightarrow \tilde E_j$.

On the other hand, define the section 
\begin{align*}
&\ {\mc F}_i: \tilde E_i \times {\mc B} \to {\mc E},\ &\ {\mc F}_i( e_1, \ldots, e_i, {\bm u}) = \sum_{1\leq l \leq i} e_l({\bm u}) + {\mc F}({\bm u}).
\end{align*}
By the $\Gammait_i$-invariance of $E_i$ and the gauge invariance of ${\mc F}$, one has that ${\mc F}_i$ is a $\Gammait_i$-equivariant section. Denote ${\mc U}_i = \Gammait_i {\mc U}_{{\bm u}_i} \subset {\mc B}$, which is a $\Gammait_i$-invariant open subset of the Banach manifold. Since ${\mc U}_{{\bm u}_i} \subset {\mc B}$ is chosen such that ${\mc F}_{{\bm u}_i}^{-1}(0) \cap ( \tilde E_{{\bm u}_i} \times {\mc U}_{{\bm u}_i})$ is transverse, the ``thickening'' $\tilde{U}_i:= {\mc F}_i^{-1}(0) \cap ( \tilde E_i \times {\mc U}_i )$ is also a $\Gammait_i$-invariant smooth manifold. Define $U_i = \tilde{U}_i/ \Gammait_i$ which is an orbifold (where the $\Gammait_i$-action is effective), and denote $E_i:= (\tilde E_i \times \tilde{U}_i) / \Gammait_i$ the orbifold bundle over $U_i$. Similar as above, one obtains a chart $C_i = (U_i, E_i, S_i, \psi_i, F_i)$. Notice that $F_i = F_{{\bm u}_i}$, so the collection of $F_i$ still cover ${\mc M}$. 

\subsubsection{The coordinate changes}

Now we construct coordinate changes. For each pair $i \leq j$, if $F_i \cap F_j \neq \emptyset$, define 
\beqn
{\mc U}_{ji} = {\mc U}_i \cap {\mc U}_j.
\eeqn
It is $\Gammait_i$-invariant since $\Gammait_i \subset \Gammait_j$. Define 
\beqn
\tilde U_{ji} = {\mc F}_i^{-1}(0) \cap ( \tilde E_i \times {\mc U}_{ji}) \subset \tilde U_i,\ \ U_{ji} = \tilde U_{ji}/ \Gammait_i.
\eeqn
Then $U_{ji} \subset U_i$ is open and the footprint is exactly $F_i \cap F_j$. Moreover, because $\tilde E_i \subset \tilde E_j$, there is an equivariant embedding $\tilde U_{ji} \to \tilde U_j$ which descends to an orbifold embedding $\phi_{ji}: U_{ji} \to U_j$. Combining with the inclusion $\tilde E_i \hookrightarrow \tilde E_j$, we obtain a bundle embedding $\wh\phi_{ji}: E_i|_{U_{ji}} \to E_j$.

Now we check that the triple $T_{ji} = (U_{ji}, \phi_{ji}, \wh\phi_{ji})$ is indeed a coordinate change from $C_i$ to $C_j$ in the sense of Definition \ref{defn722}. First, by construction, the footprint of $U_{ji}$ is $F_i \cap F_j$. Second, if $x_k \in U_i$ converges to $x_\infty \in U_i$ and the embedding images $y_k = \phi_{ji}(x_k)$ converges to $y_\infty \in U_j$, then we can lift $x_k$, $y_k$ to sequences $\tilde x_k  = \tilde y_k \in {\mc U}_{ji}$. The convergences imply that $\tilde x_k$ and $\tilde y_k$ converge to $\tilde x_\infty = \tilde y_\infty \in {\mc U}_{ji}$. Hence $x_\infty \in U_{ji}$ and $y_\infty = \phi_{ji}(x_\infty)$. Therefore, $T_{ji}$ is a coordinate change from $C_i$ to $C_j$. 

We check the cocycle condition and the overlapping condition of Definition \ref{defn724}. The cocycle condition follows directly from our construction. The overlapping condition holds because the index set $\{1, \ldots, N\}$ is totally ordered. Therefore, the atlas ${\mf A} = ( \{ C_i\}, \{ T_{ji} \})$ is a virtual orbifold atlas. By Theorem \ref{thm728}, an appropriate shrinking of ${\mf A}$ produces a good atlas. 

Lastly, combining the previous discussion on orientations, the good atlas is oriented. Then by the abstract topological construction of the virtual cycle given in the section7, there is a well-defined virtual cardinality $\# {\mc M}\in {\mb Q}$ whenever the virtual dimension is zero. This finishes the proof of Theorem \ref{thm31}.

\begin{rem}\label{rem311}
We comment on a special case. When there is no puncture, i.e., there is a smooth $r$-th root of $K_\Sigma$, which is only possible when $r$ divides $2g-2$, the ${\mc G}_\tau$-action on ${\mc B}$ may not be free because ${\mc G}_\tau$ contains constant gauge transformations now.  On the other hand, by the energy identity, if $(A, u)$ is a solution in this case, then $u(\Sigma) \subset {\bf Crit} W$ and $(A, u)$ is also a solution to the symplectic vortex equation. In this case, in order to get a good theory, one has to choose the moment map (by adding to $\mu$ a constant in ${\mf g}$) so that for solutions $(A, u)$, $u(\Sigma)$ is not contained in $\mu^{-1}(c)$ for a singular value $c$ of $\mu$. Then the automorphism group of a solution $(A, u)$ is at most finite. 
\end{rem}

\begin{rem}
There have been various choices made in the construction of the good virtual orbifold atlas. We have to prove that the virtual count is independent of these choices. The two major choices are: the finite many charts $C_{{\bm u}_i}$ centered at $[{\bm u}_i]\in {\mc M}$ whose footprints cover ${\mc M}$; the various shrinkings we made in order to achieve a good virtual orbifold atlas. The independence from the second set of choices can be proved in an abstract setting. On the other hand, if we have two collections of charts $\{ C_{{\bm u}_i} \}$ and $\{ C_{{\bm u}_i'} \}$, then one can construct a virtual orbifold atlas with boundary on the product ${\mc M} \times [0, 1]$, such that the two boundary components have the corresponding virtual orbifold atlases constructed out of these two collection of basic charts. Using this cobordism one can prove that the virtual count is well-defined.
\end{rem}

\section{Invariance of the Correlation Function}\label{section4}

In this section we prove the invariance of the correlation function under changes of strongly regular perturbations. The main idea of the proof has been provided in \cite{Tian_Xu_2} and we give more details here, although the most technical part (the wall-crossing formula) will be completed in the companion paper \cite{Tian_Xu_4}.

\begin{thm}\label{thm41}
The correlation function defined by \eqref{eqn35} is independent of the choice of the strongly regular perturbation ${\uds P}$.
\end{thm}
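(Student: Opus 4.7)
The plan is to connect any two strongly regular perturbations ${\bm P}^0$ and ${\bm P}^1$ by a generic real one-parameter family $\{{\bm P}^t\}_{t \in [0,1]}$ in the ambient parameter space $\prod_{j=1}^b \wt{V}_j$. Since by Hypothesis \ref{hyp27} each $\wt{V}_j^{sing}$ has positive complex codimension and each $\wt{\it Wall}_j$ is a locally finite union of real analytic hypersurfaces, after a small perturbation of the path I may assume it avoids $\bigcup_j \wt{V}_j^{sing}$ entirely and crosses $\bigcup_j \wt{\it Wall}_j$ transversely at finitely many interior values $0 < t_1 < \cdots < t_N < 1$. It then suffices to show that $\langle \cdot \rangle_{{\mz C},B,{\bm P}^t}$ is constant on each open subinterval $(t_{i-1}, t_i)$ and to compute the jump across each $t_i$.

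For the constancy on wall-free intervals, I would carry out the parametrized version of the linear and obstruction-bundle setup from Section \ref{section3}. On $[t_{i-1}+\epsilon, t_i-\epsilon]$ every ${\bm P}^t$ is strongly regular, so Theorem \ref{thm214} implies the universal moduli space $\bigsqcup_t {\mc M}_{{\bm P}^t}({\mz C}, B, {\bm \kappa})$ is compact. The Banach bundle setup of Subsection \ref{subsection32} extends to a one-parameter family, giving a Fredholm section of index $\chi({\mc C},B)+1$, and the obstruction-bundle construction of Subsection \ref{subsection34} then produces an oriented virtual orbifold atlas with boundary on the universal space whose two boundary components are the virtual atlases at the endpoints. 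Standard virtual-cobordism reasoning gives equality of the virtual counts $\#{\mc M}_{{\bm P}^{t_{i-1}+\epsilon}} = \#{\mc M}_{{\bm P}^{t_i-\epsilon}}$; moreover, since each critical point $\kappa_j(t) \in {\it Crit}(\wt{W}_j^{(\delta)}|_{\wt{X}_j})$ and the corresponding unstable cycle $[W^u_{\kappa_j(t)}] \in H_{n_j-1}(Q_j^{a_j(t)}, \infty)^{{\mb Z}_r}$ vary continuously in $t$, the pairings $\theta_j^* \cap [W^u_{\kappa_j(t)}]$ appearing in \eqref{eqn36} are locally constant. Hence $\langle \cdot \rangle_{{\mz C},B,{\bm P}^t}$ is constant on each $(t_{i-1}, t_i)$.

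The main obstacle is the wall-crossing at $t = t_i$. There the imaginary parts of two critical values of some $\wt{W}_{j_0}^{(\delta)}|_{\wt{X}_{j_0}}$ coincide, with two simultaneous effects: nontrivial BPS solitons on $\wt{X}_{j_0}$ appear in the limit by the energy identity \eqref{eqn210}, so the universal moduli space acquires boundary strata of soliton solutions, and the negative gradient flow of $\mathrm{Re}\,\wt{W}_{j_0}^{(\delta)}|_{\wt{X}_{j_0}}$ undergoes a Cerf-type handle slide, so $[W^u_{\kappa_{j_0}(t)}]$ jumps discontinuously in $H_{n_{j_0}-1}(Q_{j_0}^{a_{j_0}}, \infty)^{{\mb Z}_r}$, altering the pairings $\theta_{j_0}^* \cap [W^u_{\kappa_{j_0}}]$ by a count of broken gradient trajectories between the two exchanging critical points. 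The plan for Sections \ref{section5}--\ref{section9} is first to compactify the universal moduli space across $t_i$ using these soliton strata and construct a virtual orbifold atlas with corners realizing the bifurcation formula (Theorem \ref{thm46}), which expresses $\#{\mc M}_{{\bm P}^{t_i+\epsilon}} - \#{\mc M}_{{\bm P}^{t_i-\epsilon}}$ as a signed count of rigid configurations carrying exactly one inserted BPS soliton at $z_{j_0}$; and then to invoke Theorems \ref{thm410}--\ref{thm411} to identify this BPS count, via the coherent Floer--Hofer orientations of Subsection \ref{subsection33}, with the handle-slide correction to $\theta_{j_0}^* \cap [W^u_{\kappa_{j_0}}]$. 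Once these two contributions are matched term by term in the sum \eqref{eqn36}, the full correlation function is unchanged across $t_i$, and induction on $N$ completes the proof.
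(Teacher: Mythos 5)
Your skeleton — connect the two perturbations by a generic one-parameter family, show the correlation function is constant on wall-free subintervals via a cobordism, and compute the jump at each transverse wall-crossing by matching the bifurcation term (Theorem \ref{thm46}) to the Picard--Lefschetz change in $[W^u_{\kappa_j}]$ — is the same skeleton the paper uses. The difference is that you compress the paper's preliminary reduction in Subsection 4.1 into a single generic path in the full parameter space $\prod_j\wt{V}_j$, allowing $a_j(t)$ to move along with $F_j(t)$, and this shortcut leaves a real gap.

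When $a_j(t)$ varies, the hypersurface $Q_j^{a_j(t)}$ varies, and \emph{both} sides of the pairing in \eqref{eqn36} become $t$-dependent: the identification $\mathscr{H}_{\gamma_j}\simeq H^{n_j-1}(Q_j^{a_j(t)};\mathbb{Q})^{\mathbb{Z}_r}$ of \eqref{eqn31} changes, and the relative cycle $[W^u_{\kappa_j(t)}]\in H_{n_j-1}(Q_j^{a_j(t)},\infty)$ changes. You assert that the resulting numbers $\theta_j^*\cap[W^u_{\kappa_j(t)}]$ are locally constant, but this requires proving that the family of isomorphisms $\mathscr{H}_{\gamma_j}\to H^{n_j-1}(Q_j^{a_j(t)})^{\mathbb{Z}_r}$ is compatible with the family of unstable cycles under continuation — which is precisely the content of the diagram \eqref{eqn41} in the paper, established by an explicit gauge transformation $g_\theta$ (Theorem \ref{thm43} and its Corollary). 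It does not come for free from continuity alone, and in fact the whole point of the $r$-th root rotation $x\mapsto e^{{\bm i}\theta/r}x$ is that it is only canonical modulo $\mathbb{Z}_r$. Moreover, the wall-crossing formula Theorem \ref{thm46} is stated and proved only for a path $F_j^\alpha$ with $a_j$ held fixed: the sign $(-1)^{\wt{F}_j}$ of Definition \ref{defn45} and the soliton count $\#\mathcal{N}(\upsilon^0,\kappa^0)$ are defined relative to the fixed fiber $Q_j^{a_j}$. If your generic path crosses a wall with $\dot a_j(t_i)\neq 0$, Theorem \ref{thm46} does not apply as stated and would have to be re-derived in that generality (or you would first have to straighten the path so $a_j$ is locally constant near the crossing — but justifying that manoeuvre is exactly the $a_j$-independence you are trying to establish). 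The paper's decomposition (rescale $a_j$ by a homotopy; rotate $a_j$ by an explicit isomorphism of moduli spaces compatible with the state-space identification; then vary $F_j$ with $a_j$ fixed and apply Theorem \ref{thm46}) is what closes both gaps, and you should reinsert it rather than rely on a blanket genericity argument.
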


We need to compare the correlation functions defined for two different strongly regular perturbations. This comparison is obviously reduced to the case that they only differ for a single broad puncture. Therefore, to save notations for the remaining of this paper, we assume that ${\mc C}$ has only one broad marking ${\bf z}$ with monodromy labelled by $m\in \{0, 1, \ldots, r-1\}$. Let $\phi \in {\bf Fr}$ be the frame variable. 

The proof consists of several steps. First we reduce the comparison to the case that $a_- = a_+ = a$. This reduction is obtained in the next subsection. Then to compare $(a, F_-)$ and $(a, F_+)$, we connect $F_-$ and $F_+$ by a smooth path $F_\iota \in {\bf V} \setminus {\bf V}_a^{\rm sing}$ and use a cobordism argument. The path may cross the wall where the perturbation is not strongly regular, and we have to prove a wall-crossing formula (Theorem \ref{thm46}). Using the wall-crossing formula the invariance of the correlation function is proved.

\subsection{Independence on $a$}

Take $a \in {\mb C}^*$ and $F \in {\bf V} \setminus \big( {\bf V}_a^{\rm sing} \cup {\bf V}_a^{\rm wall} \big)$. For $a_\lambda = \lambda^r a$ with $\lambda > 0$, we define $F_\lambda (x) = \lambda^r F ( \lambda^{-1} x)$ where we view $\lambda$ as an element of $(G')^{\mb C}$. Denote $P_\lambda = (a_\lambda, F_\lambda)$ and $\tilde{W}_\lambda = W - a_\lambda p + F_\lambda$. Then 
\beqn
\left\{ \begin{array}{c} Q (x) = a; \\
                         p dQ (x) + d F(x) = 0 \end{array} \right. \Longleftrightarrow \left\{ \begin{array}{c} Q( \lambda x) = a_\lambda; \\
													p dQ ( \lambda x ) + d F_\lambda ( \lambda x) = 0 \end{array} \right.
\eeqn
Then $\kappa = (p, x) \in {\bf Crit} \big[ \tilde{W}|_{\tilde{X}_\gamma} \big]$ implies that $\kappa_\lambda:= (p, \lambda x) \in {\bf Crit} \big[\tilde{W}_\lambda|_{\tilde{X}_\gamma} \big]$. Moreover, $\tilde{W}_\lambda (\kappa_\lambda) =  \lambda^r \tilde{W} (\kappa)$.
Therefore, $P$ is strongly regular if and only if $P_\lambda$ is strongly regular. Now for any $\lambda>1$, consider the family of perturbations $P_\alpha$ for $\alpha \in [1, \lambda]$. Consider the moduli space
\beqn
\tilde{\mc M}_{\tilde P} (\kappa) = \Big\{ (\alpha, [{\bm u}])\ |\ \alpha \in [1, \lambda],\ [{\bm u}] \in {\mc M}_{P_\alpha} ( \kappa_\alpha) \Big\}.
\eeqn

\begin{thm}\label{thm42}
There is an oriented virtual orbifold atlas on $\tilde{\mc M}_{\tilde{P}}(\tilde{\kappa})$ with oriented virtual boundary $\left[ {\mc M}_{P_\lambda} (\kappa_\lambda) \right] \sqcup \left[ - {\mc M}_P (\kappa) \right]$. Therefore $\# {\mc M}_P(\kappa) = \# {\mc M}_{P_\lambda} (\kappa_\lambda)$.
\end{thm}

\begin{proof}
The proof is a standard homotopy argument and we omit the details.
\end{proof}

Therefore, the comparison is reduced to the case that $a_- = e^{\i \theta} a_+ = e^{\i \theta} a$ for some $\theta\in [0, 2\pi)$. Take $F_\theta (x) = F( e^{- \i \theta/r} x)$ and $P_\theta = ( e^{\i \theta} a, F_\theta)$ and $\tilde{W}_\theta = W - a_\theta p + F_\theta$. Then
\beqn
\left\{ \begin{array}{c} Q(x) = a; \\
                     p dQ ( x) + dF (x) = 0 \end{array} \right. \Longleftrightarrow \left\{ \begin{array}{c} Q ( e^{\i \theta /r} x) = a_\theta;  \\
										( e^{- \i \theta} p) dQ ( e^{\i \theta/r} x) + dF_\theta ( e^{\i \theta/r} x) = 0 \end{array} \right. \eeqn
Then if $\kappa = (p, x)$ is a critical point of $\tilde{W}$ restricted to $\tilde{X}_\gamma$, then $\kappa_\theta:= (e^{-\i \theta}p, e^{\i \theta/r} x)$ is a critical point of $\tilde{W}_\theta$ restricted to $\tilde{X}_\gamma$. Moreover, $\tilde{W}_\theta (\kappa_\theta) = \tilde{W}(\kappa)$. So all critical values of $\tilde{W}_\theta$ still have distinct imaginary parts, namely, $P_\theta$ is strongly regular. 

\begin{thm}\label{thm43}
There exist a homeomorphism ${\mc M}_P(\kappa) \simeq {\mc M}_{P_\theta} (\kappa_\theta)$ and an identification between their oriented virtual orbifold atlases, compatible with this homeomorphism. Therefore $\# {\mc M}_P (\kappa) = \# {\mc M}_{P_\theta}(\kappa_\theta)$. 
\end{thm}

\begin{proof}
We define a gauge transformation $g_\theta'': \Sigma^* \to G''$ which is equal to $e^{\i \theta/r}$ over $C_i$ and, by using a cut-off function, extended to the identity away from $C_i$. Notice that this is not an element of ${\mc G}$. For any solution ${\bm u} = (A, u, \phi) \in \tilde{\mc M}_P (\kappa)$, define 
\beqn
{\bm u}_\theta = (A_\theta, u_\theta, \phi_\theta):= ( g_\theta^* A, g_\theta^* u, e^{-\i\theta/r} \phi).
\eeqn
We would like to show that ${\bm u}_\theta$ is a solution to the gauged Witten equation with perturbation $P_\theta$. First, away from the cylindrical end where $\tilde{\mc W}_{A, \phi}$ coincides with the unperturbed ${\mc W}_A$, ${\bm u}_\theta$ still solves the equation by the gauge invariance property of $\tilde{\mc W}_{A, \phi}$. On the cylindrical end, denote $\check{g}_\theta'' = e^{-\i \theta/r} g_\theta''$. Then we see that $\check{A}_\theta:= (\check{g}_\theta'')^*A = (g_\theta'')^* A$. Then by the property of $\tilde{\mc W}_{A, \phi}$ (see Subsection \ref{subsection24}), over the cylindrical end one has
\beqn
\tilde{\mc W}_{A_\theta, \phi_\theta}  = \tilde{\mc W}_{ \check{A}_\theta, \phi_\theta} = \tilde{\mc W}_{A, \phi_\theta} \circ \check{g}_\theta'' = \tilde{\mc W}_{A, \phi} \circ g_\theta''.
\eeqn
Then since $g_\theta''$ is a unitary gauge transformation which preserves the metric, one has
\beqn
( g_\theta'')_*  \Big[  \ov\partial_{A_\theta} u_\theta + \nabla \tilde{\mc W}_{A_\theta, \phi_\theta} (u_\theta) \Big] =  \ov\partial_A u + (g_\theta'')_* \nabla \tilde{\mc W}_{A_\theta, \phi_\theta} (u_\theta) = \ov\partial_A u + \nabla \tilde{\mc W}_{A, \phi}(u) = 0.
\eeqn
Therefore, ${\bm u}_\theta$ solves the gauged Witten equation with perturbation $P_\theta$. Hence $g_\theta''$ induces a homeomorphism between ${\mc M}_P(\kappa)$ and ${\mc M}_{P_\theta}(\kappa_\theta)$. It also induces an identification between the the virtual orbifold atlases because $g_\theta''$ is globally defined. 
\end{proof}

\begin{cor}
The correlation functions defined for $P$ and $P_\theta$ coincide.
\end{cor}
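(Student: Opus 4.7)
The plan is to deduce the corollary almost directly from Theorem \ref{thm43} together with a canonical identification of the relevant intersection pairings on $Q_j^{a_j}$ and $Q_j^{a_j'}$ induced by the rescaling biholomorphism $\Phi_\theta: x \mapsto e^{{\bm i}\theta/r} x$. Concretely, every piece of data entering the definition \eqref{eqn36} of the correlation function either lives at punctures other than $z_j$ (where ${\bm P}$ and ${\bm P}'$ coincide and so there is nothing to check) or is the virtual count against a product involving the $j$-th broad puncture; thus I only need to match the contribution at $z_j$.

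First, I would record the key geometric fact: since $Q_j$ is homogeneous of degree $r$, the map $\Phi_\theta$ is a biholomorphism $Q_j^{a_j} \to Q_j^{a_j'}$, commutes with the ${\mb Z}_r$-action, and satisfies $F_j' \circ \Phi_\theta = F_j$ by the definition $F_j'(x) = F_j(e^{-{\bm i}\theta/r}x)$. Consequently $\Phi_\theta$ sends critical points of $F_j|_{Q_j^{a_j}}$ bijectively to critical points of $F_j'|_{Q_j^{a_j'}}$, matching $\kappa_j \mapsto \kappa_j'$ in the notation of Theorem \ref{thm43}, and sends the negative gradient flow of ${\rm Re}\, F_j|_{Q_j^{a_j}}$ to that of ${\rm Re}\, F_j'|_{Q_j^{a_j'}}$. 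In particular, it induces an isomorphism of pairs
\beqn
(\Phi_\theta)_*: H_{n_j-1}\big( Q_j^{a_j},\infty;{\mb Z}\big) \to H_{n_j-1}\big( Q_j^{a_j'},\infty;{\mb Z}\big),\quad [W^u_{\kappa_j}]\mapsto [W^u_{\kappa_j'}].
\eeqn

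Next, I would verify that $\Phi_\theta$ intertwines the two realizations \eqref{eqn31} of ${\ms H}_{\gamma_j}$. Both $Q_j^{a_j}/{\mb Z}_r$ and $Q_j^{a_j'}/{\mb Z}_r$ are canonically identified with $X_{Q_j}^c/{\mb C}^*$, and under these identifications $\Phi_\theta$ descends to the identity on $X_{Q_j}^c/{\mb C}^*$. Hence the induced map on ${\mb Z}_r$-invariant cohomology is exactly the identity on ${\ms H}_{\gamma_j}$, so the Poincar\'e dual $\theta_j^*$ corresponds to $(\Phi_\theta)_*(\theta_j^*)$ under the identification \eqref{eqn32} taken at $a_j$ versus $a_j'$. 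Since the intersection pairing \eqref{eqn34} is natural under diffeomorphism, this yields
\beqn
\theta_j^* \cap [W^u_{\kappa_j}] \;=\; \theta_j^* \cap [W^u_{\kappa_j'}]
\eeqn
where the two sides are the pairings computed on $Q_j^{a_j}$ and $Q_j^{a_j'}$ respectively.

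Finally, I would combine this with Theorem \ref{thm43}: for fixed asymptotic data ${\bm \kappa}$ at the other broad punctures, the correspondence $\kappa_j \leftrightarrow \kappa_j'$ is a bijection of the summation indices in \eqref{eqn36}, and Theorem \ref{thm43} gives $\#{\mc M}_{\bm P}({\mz C},B,{\bm \kappa},\kappa_j) = \#{\mc M}_{{\bm P}'}({\mz C},B,{\bm \kappa},\kappa_j')$ term by term, while the two preceding paragraphs identify the factor $\theta_j^* \cap [W^u_{\kappa_j}]$ with $\theta_j^* \cap [W^u_{\kappa_j'}]$. Summing over ${\bm \kappa}$ and $\kappa_j$ therefore gives equality of correlation functions. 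The only non-routine point to watch is the compatibility of orientations on the unstable manifolds under $\Phi_\theta$; this is handled by the orientation discussion at the end of Subsection \ref{subsection33}, where it is checked that as the framing ${\bm \varphi}$ (equivalently, the rotation parameter $\theta$) varies continuously, the induced orientation on $W^u_{\kappa_j}$ is transported continuously to $W^u_{\kappa_j'}$, so no sign discrepancy appears. This will be the main, and essentially the only, subtlety to confirm.
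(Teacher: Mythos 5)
Your proposal is correct and follows essentially the same route as the paper: use the ${\mb Z}_r$-equivariant biholomorphism $\Phi_\theta: Q_j^{a_j}\to Q_j^{a_j'}$ to identify the state spaces via the commuting diagram \eqref{eqn41} and to match the Lefschetz thimble classes, then invoke Theorem \ref{thm43} for the term-by-term equality of virtual counts. You spell out a few steps the paper leaves implicit (the identity $F_j'\circ\Phi_\theta = F_j$, the $S^1$-isometry reason for gradient-flow equivariance, and the orientation compatibility tracked through the end of Subsection \ref{subsection33}), but the argument is the same.
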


\begin{proof}
The map $f_\theta: x\mapsto e^{\i \theta/r} x$ induces an isomorphism 
\beqn
f_\theta^*: H^{\rm mid} (Q_\gamma^{a_\theta})^{{\mb Z}_r} \simeq H^{\rm mid} (Q_\gamma^a)^{{\mb Z}_r}
\eeqn
which intertwines with the isomorphisms $H^{\rm mid}(Q_\gamma^{a_\theta})^{{\mb Z}_r} \simeq {\mc H}_\gamma \simeq H^{\rm mid}( Q_\gamma^a)^{{\mb Z}_r}$. 
Moreover $f_\theta$ maps gradient flows of $F|_{Q_\gamma^a}$ to the gradient flow of $F_\theta|_{Q_\gamma^{a_\theta}}$ since $f_\theta$ is an isometry. Therefore, given ${\mc O} \in {\mc H}_\gamma \simeq H^{\rm mid}(Q_\gamma^a)^{{\mb Z}_r}$ which corresponds to a cocycle ${\mc O}_\theta \in H^{{\rm mid}}(Q_\gamma^{a_\theta})^{{\mb Z}_r}$, for any $\kappa\in {\bf Crit} \tilde{W}|_{\tilde{X}_\gamma}$ and $\kappa_\theta \in {\bf Crit} \tilde{W}_\theta|_{\tilde{X}_\gamma}$, one has
\beqn
 {\mc O}^* \cap [W_\kappa^u] = {\mc O}_\theta^* \cap [ W_{\kappa_\theta}^u].
\eeqn
By the definition of the correlation function \eqref{eqn35} and Theorem \ref{thm43}, the correlation functions defined for $P$ and $P_\theta$ coincide.
\end{proof}

\subsection{The wall-crossing}

Now we reduce the comparison between correlation functions for the two strongly regular perturbations to the case that $a_+ = a_- = a$. The detail of this part will be greatly expanded and will be contained in the companion paper \cite{Tian_Xu_4}. Here we only discuss up to the point where the wall-crossing formula appears, and how it implies the invariance of the correlation function. Consider $F_\pm \in {\bf V} \setminus ( {\bf V}_a^{\rm sing} \cup {\bf V}_a^{\rm wall} )$. Since ${\bf V}_a^{\rm sing}$ is a proper complex analytic set, ${\bf V} \setminus {\bf V}_a^{\rm sing}$ is path-connected. So there is a smooth path $\tilde F$ in ${\bf V} \setminus {\bf V}_a^{\rm sing}$ connecting $F_\pm$. We also write the path $\tilde F$ by $F_\iota$ for $\iota \in [\iota_-, \iota_+]$ and denote $P_\iota = (a, F_\iota)$. We can perturb the path such that it only intersects ${\bf V}_a^{\rm wall}$ through its smooth locus transversely. Therefore we may assume that the intersection only appears at $\iota = \iota_0\in (\iota_-, \iota_+)$. By the definition of ${\bf V}_a^{\rm wall}$, there are smooth paths $\upsilon_\iota, \kappa_\iota \in {\bf Crit} \tilde{W}_\iota |_{\tilde{X}_\gamma} \simeq {\bf Crit} F_\iota |_{Q_\gamma^a}$ such that
\beqn
{\bf Re} F_{\iota_0} ( \upsilon_{\iota_0} ) > {\bf Re} F_{\iota_0} (\kappa_{\iota_0} ),\ {\bf Im} F_{\iota_0} (\upsilon_{\iota_0} ) = {\bf Im} F_{\iota_0} (\kappa_{\iota_0} ),
\eeqn
\beq\label{eqn41}
 \left. \frac{d}{d\iota } \right|_{\iota = \iota_0} \big[ {\bf Im} F_\iota (\upsilon_\iota ) - {\bf Im} F_\iota  (\kappa_\iota ) \big] \neq 0.
\eeq
The other critical values all have distinct imaginary parts for all $\iota \in [\iota_-, \iota_+]$. Denote
\beqn
\upsilon_\pm = \upsilon_{\iota_\pm}, \kappa_\pm = \kappa_{\iota_\pm}.
\eeqn

\begin{defn}\label{defn45}
We define $(-1)^{\tilde{F}}:= {\rm sign} \tilde F$ to be $+1$/$-1$ if \eqref{eqn41} is positive/negative.
\end{defn}

\begin{thm}\label{thm46}
In the situation described above, the following holds. For $\kappa' \neq \kappa$, one has $\# {\mc M}_{P_+}(\kappa') = \# {\mc M}_{P_-}(\kappa')$. On the other hand, 
\beqn
\# {\mc M}_{P_+} ( \kappa ) - \# {\mc M}_{P_-} ( \kappa) = - (-1)^{\tilde F} \Big[ \# {\mc N}(\upsilon_{\iota_0} ,\kappa_{\iota_0} ) \Big]  \Big[ \# {\mc M}_{P_-}  ( \upsilon) \Big].
\eeqn
Here $\# {\mc N}(\upsilon_{\iota_0}, \kappa_{\iota_0} )$ is the algebraic counts of the number of BPS solitons in $\tilde X_\gamma$ for the perturbed function $\tilde W_{\iota_0} = W - ap + F_{\iota_0}$ between the two critical points $\upsilon_{\iota_0}$ and $\kappa_{\iota_0}$.
\end{thm}

The above theorem is referred to as the bifurcation or wall-crossing formula, which will be proved in the companion paper \cite{Tian_Xu_4}. Here we briefly explain the proof. One needs to construct a virtual fundamental chain over a universal moduli space fibred over $[\iota_-, \iota_+]$ parametrizing the path of perturbations. Because of the appearance of soliton solutions for the non-strongly regular perturbation $P_{\iota_0}$, the universal moduli space is not compact. One can compactify it by adding soliton solutions. Moreover, when constructing the virtual chain, an important fact is that non-BPS soliton solutions are interior points and BPS soliton solutions are boundary points of the universal moduli space. This is why the counting of BPS solitons appears in the wall-crossing formula. This is the same situation happening in Hamiltonian Floer theory when one has time-independent Hamiltonians and almost complex structures, and  this phenomenon also appeared in \cite{FJR3} in the case of Landau--Ginzburg theory.

On the other hand, the coefficients ${\mc O}_i^* \cap [ W_{\kappa_i}^u]$ in \eqref{eqn35} also undergo a bifurcation when crossing the wall. This is just the famous Picard--Lefschetz formula, stated as follows. A key issue is the sign. 
\begin{prop}[Picard--Lefschetz formula] \label{prop47}  For $M$ sufficiently large, we have
\beqn
[ W_{\upsilon_+}^u ] - [ W_{\upsilon_-}^u ] = (-1)^{\tilde{F}} \# {\mc S}(\upsilon_{\iota_0}, \kappa_{\iota_0} ) [ W_{\kappa_{\iota_0}}^u ]\in H_{\rm mid} ( Q_\gamma^a,\  {\bf Re} F_{\iota_0} \leq -M ).
\eeqn
Here $\# {\mc S}(\upsilon_{\iota_0}, \kappa_{\iota_0})$ is the algebraic count of BPS solitons in $Q_\gamma^a$ for the function $F_{\iota_0}$ between the two critical points $\upsilon_{\iota_0}$ and $\kappa_{\iota_0}$ (see Definition \ref{defn64}).
\end{prop}
This proposition is proved in Section \ref{section6} where a key issue is to make the sign correct. 

A further step is to identify the two countings $\# {\mc N}(\upsilon_{\iota_0}, \kappa_{\iota_0})$ and $\# {\mc S}(\upsilon_{\iota_0}, \kappa_{\iota_0})$. 

\begin{prop}\label{prop48}
One has $\# {\mc N}(\upsilon_{\iota_0}, \kappa_{\iota_0}) = \# {\mc S}(\upsilon_{\iota_0}, \kappa_{\iota_0})$.
\end{prop}
The proof, given in Section \ref{section5}, is based on an adiabatic limit argument which was previously used in \cite{Lagrange_multiplier}.

Now one can derive the equality between the correlation functions for $F_{\iota_-}$ and $F_{\iota_+}$. The proof has already been given in \cite{Tian_Xu_2}. However we still include it here. 

For each path of critical points $\kappa_\iota \in {\bf Crit} \big[ \tilde{W}_\iota |_{\tilde{X}_\gamma} \big]$, we denote by $n_\kappa^\pm$ the virtual cardinality of the moduli space ${\mc M}_{P_\pm}(\kappa_\pm)$. Then the correlation functions for $P_\pm$ are linear functionals on $H^{\rm mid} (Q_\gamma^a; {\mb Q})^{{\mb Z}_r}$ defined by
\beqn
\langle {\mc O} \rangle_\pm = \sum_{\kappa^\pm \in {\bf Crit} [ \tilde{W}_{\tilde{X}_\gamma}^\pm ] } n_\kappa^\pm \cdot ( {\mc O}^* \cap [(\kappa_\pm)^-]).
\eeqn
Suppose the wall-crossing happens between the two paths $\kappa_\iota', \kappa_\iota''$ such that
\beqn
{\bf Re} \tilde{W}_\iota (\kappa_\iota')  < {\bf Re} \tilde{W}_\iota ( \kappa_\iota''),\ \left. \frac{d}{d\iota }\right|_{\iota = 0} \left[ {\bf Im} \tilde{W}_\iota ( \kappa_\iota') - {\bf Im} \tilde{W}_\iota ( \kappa_\iota'') \right] \neq 0.
\eeqn
Then we have $n_\kappa^+ = n_\kappa^-$ for $\kappa \neq \kappa'$, $[ W_{\kappa_+}^u ] = [W_{\kappa_-}^u]$ for $\kappa \neq \kappa''$. Moreover, by Theorem \ref{thm46}, Proposition \ref{prop47} and Proposition \ref{prop48}, one has
\begin{multline*}
\langle {\mc O} \rangle_+ - \langle {\mc O} \rangle_-  = n_{\kappa'}^+ \cdot \big( {\mc O}^* \cap [W_{\kappa_+'}^u] \big) + n_{\kappa''}^+  \cdot \big( {\mc O}^* \cap [ W_{\kappa_+''}^u] \big) - n_{\kappa'}^- \cdot \big( {\mc O}^* \cap [W_{\kappa_-'}^u ] \big) + n_{\kappa''}^- \cdot \big( {\mc O}^* \cap [W_{\kappa_-''}^u ] \big)\\
= \big( n_{\kappa'}^+ - n_{\kappa'}^- \big) \cdot \big( {\mc O}^* \cap [ W_{\kappa_+'}^u ] \big) +  n_{\kappa''}^+ \cdot \big( {\mc O}^* \cap ( [ W_{\kappa_+''}^u] - [W_{\kappa_-''}^u ] ) \big)\\
= - (-1)^{\tilde{F}} \big( \# {\mc N}(\kappa_0'', \kappa_0') \big) \cdot n_{\kappa''}^+ \cdot \big( {\mc O}^* \cap [W_{\kappa_+'}^u] \big)  +   n_{\kappa''}^+ \cdot \big( {\mc O}^* \cap (-1)^{\tilde{F}} \# {\mc N}(\kappa_0'', \kappa_0') [W_{\kappa_-'}^u] \big) = 0.
\end{multline*}
Therefore it completes the proof of Theorem \ref{thm41}.

\section{Proof of Proposition \ref{prop48}}\label{section5}

In this section we proof Proposition \ref{prop48}. Since we will work purely inside the fixed point $\tilde{X}_\gamma \subset \tilde X$, to save notations we assume that $\gamma = {\rm Id}$. Before going into details let us look at the problem from an intuitive perspective. Let $F = F_{\iota_0} \in {\bf V}_a \setminus {\bf V}_a^{\rm sing}$ be the regular but not strongly regular perturbation. Denote $F^a:= F|_{Q^a}$. Denote $\tilde W = W - ap + F$. Choose $\upsilon, \kappa \in {\bf Crit} \tilde W$, which, by the principle of Lagrange multiplier, corresponds to two critical points of the restriction $F^a$. By abuse of notation we still denote them by $\upsilon$ and $\kappa$. Let ${\mc S}_{F^a} (\upsilon, \kappa)$ be the space of solutions to
\beqn
\frac{ dy}{ds} + \nabla F^a( y(s)) = 0,\ \lim_{s \to -\infty} y(s) = \upsilon,\ \lim_{s \to +\infty} y(s) = \kappa
\eeqn
modulo time translation. On the other hand, the gradient flow equation of $\tilde{W}$ can be written in components as
\beqn
\left\{ \begin{array}{rcc}
\displaystyle \frac{dx}{ds} + \ov{p(s)} \nabla Q(x(s)) + \nabla F(x(s)) & = & 0,\\[0.2cm]
\displaystyle \frac{dp}{ds} + \ov{Q(x(s)) - a} & = & 0.
\end{array}\right.
\eeqn
We introduce a real parameter $\lambda>0$ and consider
\beq\label{eqn51}
\left\{ \begin{array}{rcc}
\displaystyle \frac{dx}{ds} + \ov{p(s)} \nabla Q(x(s)) + \nabla F(x(s)) & = & 0,\\[0.2cm]
\displaystyle \frac{dp}{ds} + \lambda^2 \left[  \ov{Q(x(s)) - a} \right] & = & 0,\\[0.2cm]
\displaystyle \lim_{s \to -\infty} (x(s), p(s)) & = & \upsilon, \\[0.2cm]
\displaystyle \lim_{s \to +\infty} (x(s), p(s)) & = & \kappa.
\end{array}\right.
\eeq
In finite dimensional Morse theory, it is convenient to consider the energy functional
\beq\label{eqn52}
E(x, p)= \frac{1}{2} \left[ \big\| \ov{p(s)} \nabla Q(x) + \nabla F(x) \big\|_{L^2}^2 + \lambda^2 \big\| \ov{Q(x) - a} \big\|_{L^2}^2  +  \big\| \dot x \big\|_{L^2}^2 + \frac{1}{\lambda^2} \big\| \dot p \big\|_{L^2}^2 \right].
\eeq
The energy of a solution to \eqref{eqn51} is equal to $\tilde{W} (\upsilon)- \tilde{W}(\kappa)$, which is independent of $\lambda$. As indicated by the result of \cite{Lagrange_multiplier}, as $\lambda \to +\infty$, solutions are approximately negative gradient lines of the restriction ${\bf Re} F^a$. If we denote by ${\mc S}_{\tilde{W}}^\lambda(\upsilon, \kappa)$ the moduli space of solutions to \eqref{eqn51} (modulo translation), then we will construct an orientation-preserving homeomorphism
\beqn
{\mc S}_{\tilde{W}}^\lambda(\upsilon, \kappa) \simeq {\mc S}_{F^a} (\upsilon, \kappa),\ \forall \lambda >>0.
\eeqn
Notice that ${\mc S}_{\tilde{W}}^1 (\upsilon, \kappa) = {\mc N}(\upsilon, \kappa)$. Moreover, by a homotopy argument the (virtual) counting of ${\mc S}_{\tilde{W}}^\lambda(\upsilon, \kappa)$ is independent of $\lambda$. This will complete the proof of Proposition \ref{prop48}. In the remaining of this section, we carry out the details of the above arguments. 

\subsection{On transversality}\label{subsection51}

Generically, there is no flow line connecting two distinct critical points with equal index. On the other hand, for a holomorphic Morse function $f$ on an $n$-dimensional complex manifold, critical points of $f$, viewed as critical points of ${\bf Re} f$, all have index $n$. Therefore, a BPS soliton, viewed as a negative gradient flow line of ${\bf Re} f$, is not transverse. For any solution $\rho: {\mb R} \to X$ to the equation	
\beq\label{eqn53}
\rho'(s) + \nabla f (\rho(s)) = 0,\ \lim_{s \to -\infty} \rho (s) = p,\ \lim_{s \to +\infty} \rho(s) = q,
\eeq
its linearization $D_\rho: W^{1, 2}(x^* TX) \to L^2(x^* TX)$ is not transverse; a necessary condition for having one solution is $
{\bf Im} f(p) = {\bf Im} f(q)$. 

We introduce the equation on pairs $\hat\rho = (h, \rho)$ where $h \in {\mb R}$ and $\rho: {\mb R} \to X$,
\beq\label{eqn54}
\rho'(s) + \nabla f(\rho(s)) - h J \nabla f( \rho (s)) = 0,\ \lim_{s \to -\infty} \rho (s) = p,\ \lim_{s \to +\infty} \rho (s) = q.
\eeq
If $(h, \rho)$ solves the solution, then $\rho$ is a negative gradient flow line of ${\bf Re} (f + h \i f)$, hence
\beqn
{\bf Im} ( f(p) + h \i f(p)) = {\bf Im} ( f(q) + h\i f(q)),
\eeqn
which implies that $h = 0$. Therefore, solutions to \eqref{eqn53} and solutions to \eqref{eqn54} are in one-to-one correspondence, by identifying $\rho$ with $\hat\rho = (0, \rho)$.

\begin{defn}\label{defn51}
A solution $\rho$ to \eqref{eqn53} is called {\it maximally transverse} (with respect to the Hermitian metric) if the corresponding linearization $\hat{D}_{\hat{\rho}}$ of \eqref{eqn54} is surjective, or equivalent, ${\bf Coker} D_\rho$ is one-dimensional (spanned by $J \nabla f(\rho)$).
\end{defn}

Now we specialize to the equation \eqref{eqn51}. Consider the Banach manifold ${\mc B}$ of $W^{1, 2}_{\rm loc}$-maps from ${\mb R}$ to $\tilde X$ which are asymptotic to $\upsilon$ and $\kappa$ at $+\infty$ and $-\infty$ respectively, both in a $W^{1, 2}$-sense. Consider the bundle ${\mc E}\to {\mc B}$ whose fibre over $\rho = (x_\rho, p_\rho) \in {\mc B}$ is
\beqn
{\mc E}_\rho = L^2(\rho^* T\tilde{X}) = L^2( x_\rho^* TX \oplus {\mb C}).
\eeqn
There is a family of sections
\beqn
{\mc F}_{\tilde{W}}^\lambda(\rho) = \left[ \begin{array}{c} \dot{x}_\rho + \ov{p}_\rho \nabla Q(x_\rho) + \nabla F(x_\rho) \\ \dot{p}_\rho + \lambda^2 \left[ \ov{(Q(x_\rho) - a)}\right]
\end{array}  \right].
\eeqn
Its linearization at $\rho$ reads
\beqn
D_\rho^\lambda (V, h) = \left[ \begin{array}{c} \nabla_t V + \nabla_V ( \ov{p} \nabla Q + \nabla F) + \ov{h} \nabla Q \\
                       h' + \lambda^2 dQ \cdot V
\end{array} \right].
\eeqn

Denote
\begin{align*}
&\ \tilde{\mc S}_{\tilde{W}}^\lambda(\upsilon, \kappa):= ( {\mc F}_{\tilde{W}}^\lambda)^{-1}(0),\ &\ {\mc S}_{\tilde{W}}^\lambda(\upsilon, \kappa) = \tilde{\mc S}_{\tilde{W}}^\lambda(\upsilon,\kappa)/{\mb R};
\end{align*}
\begin{align*}
&\ \tilde{\bm {\mc S}}_{\tilde{W}}:= \bigsqcup_{\lambda \geq 1} \{\lambda \} \times \tilde{\mc S}_{\tilde{W}}^\lambda (\upsilon, \kappa),\ &\ {\bm {\mc S}}_{\tilde{W}}:= \bigsqcup_{\lambda \geq 1} \{\lambda \} \times {\mc S}_{\tilde{W}}^\lambda (\upsilon, \kappa).
\end{align*}
$\tilde{\bm {\mc S}}_{\tilde{W}}$ can be viewed as the zero locus of ${\bm {\mc F}}_{\tilde{W}}: [1, +\infty) \times {\mc B} \to {\mc E}$ with ${\bm {\mc F}}_{\tilde{W}}(\lambda, \rho) = {\mc F}_{\tilde{W}}^\lambda(\rho)$. It is not hard to show that by perturbing the Hermitian metric on $X$, one can assume the following:
\begin{enumerate}

\item For any $\tilde\rho: = (\lambda, \rho) \in \tilde{\bm {\mc S}}_{\tilde{W}}$, the linearization of ${\bm {\mc F}}_{\tilde{W}}$ at $\tilde \rho$ has one-dimensional cokernel.

\item Each $\rho \in \tilde{\mc S}_{\tilde{W}}^1$ is maximally transverse.

\item Each $y \in \tilde{\mc S}_{F^a} (\upsilon, \kappa)$ is maximally transverse w.r.t. the induced metric on $Q^a$.
\end{enumerate}

\subsection{Adiabatic limit}\label{subsection52}

We first need to establish the compactness of $\tilde{\bm{\mc S}}_{\tilde{W}}$ as $\lambda \to +\infty$. Recall that $\star\in X$ is the unique critical point of $Q$. Therefore there is a well-defined function $q: X \setminus \{ \star\} \to {\mb C}$ defined by
\beqn
\big[ q(x) dQ(x)	+ dF (x)\big] \cdot \nabla Q(x) = 0.
\eeqn
Notice that for $x \in Q^a$,
\beqn
\ov{q(x)} \nabla Q + \nabla F(x) = \nabla F^a(x) \in T_x Q^a.
\eeqn

\begin{prop}\label{prop52}
Let $\lambda_i$ be a sequence of positive numbers such that $\lim \lambda_i = +\infty$. Suppose $\tilde{x}_i(s) = (x_i(s), p_i(s))$ is a sequence of solutions to \eqref{eqn51} with $\lambda = \lambda_i$. Then there is a subsequence (still indexed by $i$), a sequence of numbers $s_i\in {\mb R}$, and a solution to 
\beqn
\dot{x}(s) + \nabla F^a ( x(s)) = 0,\ x: {\mb R} \to Q^a,\ \lim_{s \to -\infty} x (s) = \upsilon,\ \lim_{s \to +\infty} x (s) = \kappa
\eeqn
such that the following conditions hold.
\begin{enumerate}
\item $x_i(s + s_i)$ converges to $x (s)$ uniformly on compact subsets of ${\mb R}$.

\item $p_i(s + s_i)$ converges to $q( x(s))$ uniformly .
\end{enumerate}
\end{prop}

\begin{proof}
The key point is the $C^0$-bound, i.e., there exists a compact subset $\tilde{K} \subset \tilde{X}$ such that $\tilde{x}_i(s) \in \tilde{K}$. Once this is proved, this proposition can be proved in the same way as in \cite{Lagrange_multiplier}. Let $3 d_a = {\rm dist}(\star, Q^a)>0$. First, we claim that for $i$ large enough, $x_i(s) \notin B_{d_a}(\star)$ for all $s\in {\mb R}$. We prove this claim by contradiction. Suppose $d( \star, x_i(s_i)) \leq d_a$.  Let $[s_i- a_i, s_i + b_i]$ be the maximal interval containing $s_i$ such that $x_i([s_i - a_i, s_i + b_i]) \subset B_{2 d_a}(\star)$. Then we have
\beqn
d_a \leq {\rm dist}(x_i(s_i - a_i), x_i(s_i)) \leq \int_{s_i - a_i}^{s_i} | \dot x_i (s)| ds  \leq \sqrt{a_i} \| \dot x_i \|_{L^2} \leq \sqrt{a_i E}.
\eeqn
Here $E$ is the energy of the solution $\tilde{x}_i$ defined by \eqref{eqn52}. Therefore, $a_i \geq d_a^2/ E$. Similarly $b_i \geq d_a^2/E$. On the other hand, there is $\epsilon_a>0$ such that $|Q(x_i(s)) - a| \geq \epsilon_a$ for all $s \in [s_i - a_i, s_i+ b_i]$. Then
\beqn
E \geq \lambda_i^2 \| Q(x_i) - a_i\|_{L^2({\mb R})}^2 \geq \lambda_i^2 \| Q(x_i) - a_i\|_{L^2([s_i - a_i, s_i + b_i])}^2 \geq \lambda_i^2 \epsilon_a^2 d_a^2 / E.
\eeqn
This is impossible for $i$ sufficiently large. Therefore the claim is proved.

Second, by the definition of $q(x)$, the properness of $\nabla Q$ (see Hypothesis \ref{hyp21}) and the boundedness of $\nabla F$ (see Hypothesis \ref{hyp25}), it is easy to see that $q$ and $\nabla q$ are uniformly bounded away from $B_{d_a}(\star)$. By \eqref{eqn51}, we have
\beq\label{eqn55}
\begin{split}
0 &\ = \frac{d}{ds} Q(x_i) - dQ(x_i) \cdot \dot x_i \\
&\ = \frac{d}{ds} Q(x_i) + dQ \cdot \Big[ \ov{p_i} \nabla Q(x_i) + \nabla F(x_i) \Big] \\
&\ = \frac{d}{ds} Q(x_i) + dQ \cdot \Big[ \ov{( p_i - q( x_i)  )} \nabla Q(x_i) \Big] \\
&\ = \frac{d}{ds} Q(x_i) + \lambda_i |\nabla Q(x_i)|^2 \Big[ \lambda_i^{-1} \ov{( p_i - q( x_i)  )} \Big] .
\end{split}
\eeq
We also have
\beq\label{eqn56}
\begin{split}
&\ \frac{d}{ds} \Big[ \lambda_i^{-1} \big( p_i - q(x_i)  \big) \Big] + \lambda_i \Big[ Q(x_i) - a \Big] \\
&\ = \lambda_i^{-1} \Big[ \dot p_i - dq \cdot \dot x_i \Big] + \lambda_i \Big[ Q(x_i) - a \Big]\\
&\ = - \lambda_i^{-1} dq \cdot \dot x_i.
\end{split}
\eeq

Consider the operator
\beqn
\begin{split}
D_{\lambda_i}: &\ W^{1, 2}({\mb R}, {\mb C}^2) \to L^2({\mb R}, {\mb C}^2)\\
           &\ (f_1, f_2) \mapsto \left( \frac{ df_1}{ds} + \lambda_i |\nabla Q(x_i)|^2 \ov{f_2} ,\ \frac{df_2}{ds} + \lambda_i f_1 \right).
\end{split}
\eeqn
Notice that $|\nabla Q(x_i)|$ is uniformly bounded from below. Then as an unbounded operator from $L^2$ to $L^2$, $D_{\lambda_i}$ is bounded from below by $c \lambda_i$ for some constant $c>0$. On the other hand, it has a right inverse from $L^2$ to $W^{1, 2}$ which is uniformly bounded. Hence by \eqref{eqn55}, \eqref{eqn56}, we have
\beqn
\| Q(x_i) - a \|_{L^2({\mb R})} + \lambda_i^{-1} \| p_i - q(x_i)\|_{L^2({\mb R})} \leq c \lambda_i^{-2} \| dq \cdot \dot x_i\|_{L^2({\mb R})} \leq c \lambda_i^{-2} \| \dot x_i \|_{L^2({\mb R})}.
\eeqn
\beq\label{eqn57}
\| Q(x_i) - a\|_{W^{1, 2}({\mb R})} + \lambda_i^{-1} \|p_i - q(x_i)\|_{W^{1, 2}({\mb R})} \leq c \lambda_i^{-1} \| \dot x_i \|_{L^2({\mb R})}.
\eeq
In particular, $\|p_i\|_{L^\infty}$ and $\| \dot p_i \|_{L^2}$ are uniformly bounded.

Now we prove that $x_i(s)$ is uniformly bounded. Consider the moment map $\mu': X \to i {\mb R}$ of the $G'$-action. For some constant $c>0$, we have
\beqn
\begin{split}
\frac{d^2}{ds^2} \i \mu' ( x_i) &\ = \partial_s \big\langle \nabla \i \mu' (x_i), \dot x_i \big\rangle\\
                                    &\ = \partial_s \big\langle \nabla \i \mu' (x_i), - \ov{p_i} \nabla Q(x_i) - \nabla F(x_i) \big\rangle\\
																		&\ = -\partial_s \Big[ {\bf Re} \big( r W(\tilde{x}_i) + F_b (x_i) \big) \Big]\\
																		&\ = - \partial_s \Big[ {\bf Re} \big( r \tilde{W}(\tilde{x}_i) + F_b(x_i) - r F (x_i) - r ap_i\big) \Big]\\
																		&\ = -r \nabla^{\lambda_i} \tilde{W} (\tilde{x}_i) \cdot \dot{\tilde x}_i - \nabla (F_b - r F ) \cdot \dot x_i + r a {\bf Re} (\dot p_i) \\
																		&\ \geq r \big| \dot{\tilde x}_i \big|_{\lambda_i}^2 - c \big| \dot x_i \big| - c \big| \dot p_i \big|\\
																		&\ \geq - c \big| \dot x_i \big| - c \big| \dot p_i \big|.
\end{split}
\eeqn
Here $F_b= \langle \nabla (\i \mu'), \nabla F \rangle$ and the second last inequality follows from (1) of Hypothesis \ref{hyp25}, i.e., $F$ behaves like a linear function. Notice that $\dot x_i$ and $\dot p_i$ are both bounded in $L^2$. If there exists $s_i\in {\mb R}$ such that
\beqn
\lim_{i \to \infty} \i \mu' ( x_i(s_i)) = \lim_{i \to +\infty} \sup_{\mb R} \i \mu' (x_i) = +\infty,
\eeqn
then it is easy to derive that $\i \mu' (x_i( \cdot + s_i))$ diverges to infinity uniformly on compact sets. However, if this is true, then by (3) of Hypothesis \ref{hyp25} (which says $\nabla \tilde{W}$ is uniformly bounded from below outside a compact subset), the energy of $\tilde{x}_i$ can be infinitely large, which is impossible. Hence $\i \mu' (x_i)$ is uniformly bounded. Since $\mu'$ is proper (see Hypothesis \ref{hyp21}), $x_i$ is uniformly bounded.

It follows from \eqref{eqn57} that $x_i$ is in a small neighborhood of $Q^a$. Therefore, one can write $x_i(s) = (\ov{x}_i(s), Q(x_i(s)))$ with $\ov{x}_i(s) \in Q^a$. Moreover, for some $c>0$,
\beqn
\big| \dot{\ov{x}}_i (s) + \nabla F_a(\ov{x}_i(s)) \big| = \big| \dot{\ov{x}}_i (s) +  q(\ov{x}_i(s)) \nabla Q(\ov{x}_i(s)) + \nabla F(\ov{x}_i(s)) \big| \leq c \big| Q(x_i(s)) - a \big|
\eeqn
which converges uniformly to zero. Hence a subsequence of $\ov{x}_i$ (still indexed by $i$) converges (modulo reparametrization) to a flow line of $F_a$. Moreover, the flow line cannot break because it must connect $\upsilon$ and $\kappa$. This finishes the proof of Proposition \ref{prop52}.
\end{proof}

Now we define a compactification of ${\bm {\mc S}}_{\tilde{W}}$, i.e.,
\beqn
\ov{ {\bm {\mc S}}_{\tilde{W}} }:= {\bm {\mc S}}_{\tilde{W}} \sqcup \Big[  \{ +\infty\} \times {\mc S}_{F^a} (\upsilon, \kappa) \Big].
\eeqn
We say that a sequence $[\tilde\rho_i] = [\lambda_i, \rho_i] \in {\bm {\mc S}}_{\tilde{W}}$ converges to $[+\infty, y(s)]$ if up to translation, the two conditions of Proposition \ref{prop52} hold.

Lastly, we need construct a boundary chart near the $+\infty$ side of $\ov{ {\bm {\mc S}}_{\tilde{W}} }$. We have
\begin{prop}\label{prop53}
Suppose the negative gradient line $y: {\mb R} \to Q^a$ is maximally transverse. Then there exist $\Lambda = \Lambda_y >0$, and a continuous map
\beqn
\Phi_y: [\Lambda, +\infty] \to \ov{ {\bm {\mc S}}_{\tilde{W}}}
\eeqn
which is a homeomorphism onto a neighborhood of $[ +\infty, y(s)]$.
\end{prop}

This proposition will be proved shortly. It puts a boundary chart on $\ov{ {\bm {\mc S}}_{\tilde{W}} }$ and hence implies an oriented cobordism from ${\mc N}(\upsilon, \kappa)$ to ${\mc S}_{F^a}(\upsilon, \kappa)$. Therefore,
\beq\label{eqn58}
\# {\mc N}(\upsilon, \kappa) (\upsilon, \kappa) = \# {\mc S}_{F^a}( \upsilon, \kappa).
\eeq
This finishes the proof of Proposition \ref{prop48}.

\subsection{Proof of Proposition \ref{prop53}}

Choose a small $\epsilon>0$. Let $Q^{a, \epsilon}$ be the $\epsilon$-neighborhood of $Q^a$ inside $X$. Let $L \subset TX|_{Q^{a, \epsilon}}$ be the rank two bundle spanned by $\nabla Q$ and $J\nabla Q$; let $L^\bot$ be its orthogonal complement. Then for those $\rho \in {\mc B}$ which are contained in $Q^{a, \epsilon}$, we can decompose the domain and the target space of the linearization $D_\rho^\lambda$ as
\beq\label{eqn59}
\begin{array}{rcccl}
{\mc T}_\rho {\mc B} & \simeq & W_L(\rho) \oplus W_T(\rho) & = & W^{1, 2}( x^* L \oplus {\mb C}) \oplus W^{1, 2}( x^* L^\bot),\\
{\mc E}_\rho & \simeq & {\mc E}_L(\rho) \oplus {\mc E}_T (\rho) & = & L^2( x^* L \oplus {\mb C}) \oplus L^2 (x^* L^\bot).
\end{array}
\eeq
We rescale the norms on $W_L(\rho)$ and ${\mc E}_L(\rho)$ as follows. We identify $(h_1 \nabla Q, h_2)\in W_L(\rho)$ with $(h_1, h_2) \in W^{1, 2} \oplus W^{1, 2}$ and define
\beq\label{eqn510}
\| (h_1, h_2)\|_{W_\lambda} = \lambda	 \| h_1 \|_{L^2} + \| h_1'\|_{L^2} + \| h_2 \|_{L^2} + \lambda^{-1} \| h_2'\|_{L^2}.
\eeq
For $(h_1 \nabla Q, h_2) \in {\mc E}_L(\rho)$, we define
\beq\label{eqn511}
\| (h_1, h_2)\|_{L_\lambda} = \| h_1 \|_{L^2} + \lambda^{-1} \|h_2\|_{L^2}.
\eeq
The norms on the tangential components are unchanged, and we use $W_\lambda$ and $L_\lambda$ to denote the modified norms on $T_\rho{\mc B}$ and ${\mc E}_\rho$ respectively.

Let $y: {\mb R} \to Q^a$ be a negative gradient line of $F^a$. Then the linearization along $y$ reads
\beqn
D_y: W^{1, 2}( y^* TQ^a) \to L^2( y^* TQ^a),\ D_y (\xi_y) = \nabla_s \xi_y + \nabla_{\xi_y} \left( \nabla F + \ov{q( y)} \nabla Q \right).
\eeqn
We define
\beqn
D_y^+: {\mb R} \oplus W^{1, 2}( y^* TQ^a) \to L^2( y^* TQ^a),\ D_y^+ (a, \xi_y) = D_y(\xi_y) - aJ \left( \nabla F + \ov{q( y) }\nabla Q \right).
\eeqn
Since $y$ is maximally transverse, $D_y^+$ is surjective.

Now we define $\rho_y \in {\mc B}$ by $\rho_y (s) = ( y(s), q(y(s)))$. This is going to be our approximate solution to \eqref{eqn51} for $\lambda$ large. To apply the implicit function theorem, we have a few estimates to make. First, by straightforward calculation,
\beq\label{eqn512}
\| {\mc S}^\lambda( \rho_y) \|_{L_\lambda} = \lambda^{-1} \| dq \cdot y' \|_{L^2}.
\eeq
Let $D_{\rho_y}^\lambda: {\mc T}_{\rho_y} {\mc B} \to {\mc E}_{\rho_y}$ be the linearization along $\rho_y$. For $\xi_y \in W_T (\rho_y) \subset {\mc T}_{\rho_y} {\mc B}$, we have
\beqn
D_{\rho_y}^\lambda( \xi_y )  = \left( \nabla_t \xi_y + \nabla_{\xi_y} \left( \nabla F + \ov{q(y)} \nabla Q \right), \ \lambda^2 dQ \cdot \xi_y \right) = ( D_y(\xi_y), 0).
\eeqn
Hence with respect to the decompositions in \eqref{eqn59}, $D_{\rho_y}$ can be written as
\beqn
D_{\rho_y}^\lambda = \left[ \begin{array}{cc} D_y & A_y \\ 0 & D_y'
\end{array} \right]
\eeqn
where for $H = (h_1 \nabla Q, h_2) \in W_L(\rho_y)$, $A_y ( H )$ is real linear in $h_1$. Therefore, by \eqref{eqn510} \eqref{eqn511},
\beqn
\| A_y (h_1 \nabla Q, h_2) \|_{L_\lambda} \leq c_1 \| h_1 \|_{L^2} \leq c_1 \lambda^{-1} \| H \|_{W_\lambda}.
\eeqn

On the other hand, it is easy to see that $D_y': W_L(\rho_y) \to {\mc E}_L(\rho_y)$ has an inverse $Q_y'$ whose operator norm with respect to the modified Banach norms \eqref{eqn510} and \eqref{eqn511} is bounded by a number independent of $\lambda$. Therefore, we construct the operator
\beqn
Q_{\rho_y}^+ = \left[ \begin{array}{cc} Q_y^+ & 0 \\ 0 & Q_y'
\end{array} \right]: \big( {\mb R} \oplus {\mc E}_T( \rho_y )\big) \oplus {\mc E}_T(\rho_y) \to W_T(\rho_y) \oplus W_L( \rho_y).
\eeqn
Moreover, for certain constant $c>0$,
\beqn
\big\| {\rm Id} - D_{\rho_y}^+ Q_{\rho_y}^+  \big\|_\lambda  = \big\| Q_y^+ A_y \big\|_\lambda \leq c \lambda^{-1}.
\eeqn
Therefore, for $\lambda$ sufficiently large, $Q_{\rho_y}^+$ is approximately a right inverse to $D_{\rho_y}^+$.

Thirdly, we need to estimate the variation of the linearized operator near $\rho_y$. One can give a local chart of the Banach manifold ${\mc B}$ near $\rho_y$ and trivialize the bundle ${\mc E} \to {\mc B}$ over this chart as follows. Using the identification $Q^{a, \epsilon} \simeq Q^a \times B_\epsilon$, for any $\xi = (\xi_y, h_1 \nabla Q, h_2) \in {\mc T}_{\rho_y} {\mc B}$ with small norm, we identify it with the map $\Phi^B (\xi):= ( \ov\exp_y \xi_y, h_1, q(y) + h_2)$ into $Q^a \times B_\epsilon \times {\mb C}$, where $\ov\exp$ is the exponential map inside $Q^a$. We trivialize ${\mc E}$ over the image of $\Phi^B$. Let $D_{\xi}^{\lambda,+}: {\mb R} \oplus {\mc T}_{\rho_y} {\mc B} \to {\mc E}_{\rho_y}$ be the linearization of $\Phi^E \circ {\mc F}_{\tilde{W}}^{\lambda, +} \circ \Phi^B$ at $\xi \in {\mc T}_{\rho_y} {\mc B}$.
\begin{lemma}
There exist, $\epsilon, c, \Lambda>0$ such that for $\lambda \geq \Lambda$ and $\| \xi \|_{W_\lambda} \leq \epsilon$, we have
\beqn
\big\| D_\xi^{\lambda, +} - D_{\rho_y}^{\lambda, +} \big\| \leq c \| \xi \|_{W_\lambda}.
\eeqn
\end{lemma}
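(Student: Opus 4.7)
The plan is a direct computation: write the operator $D_V^{\lambda,+} - D_{\rho_{\ov y}}^{\lambda,+}$ as a sum of pointwise zero-order terms built from second derivatives of the data $(Q, F)$ and from the curvature of $X$, and then track how each term behaves under the $\lambda$-weighted norms $W_\lambda$, $L_\lambda$ introduced in \eqref{eqn812} and \eqref{eqn813}. The key observation, as usual in adiabatic-limit arguments, is that the weights are tuned precisely so that the factor $\lambda^2$ in front of $Q(x)-a$ in ${\mc F}_{\wt{W}}^\lambda$ is absorbed by the pairing of the weight $\lambda$ on the $L$-component of the domain with the weight $\lambda^{-1}$ on the second factor of ${\mc E}_L$.

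First, using the local chart $\Phi^B$ and the trivialization $\Phi^E$ around $\rho_{\ov y}$, I would expand $\Phi^E \circ {\mc F}_{\wt W}^{\lambda,+} \circ \Phi^B$ as a function of $V = (\ov V, h_1\nabla Q, h_2)$ and differentiate. The result is a sum of three kinds of pointwise bilinear expressions:
\begin{enumerate}
\item[(a)] contributions from the transversal block $\ov D_{\ov y}$, involving $\nabla^3 F$, $\nabla^3 Q$, curvature tensors of $X$, and the holonomy of parallel transport along short geodesics; these are bounded pointwise by $C|V|$ uniformly in $\lambda$, so they contribute at most $c\|V\|_{W_\lambda}$ to the $L_\lambda$-norm of the output;
\item[(b)] mixed terms coming from the factor $\ov p\,\nabla Q(x) + \nabla F(x)$, whose variation contains a part bilinear in $(h_2, \ov V)$ or $(h_1, \ov V)$; in the worst case one gets a factor $h_1\,\nabla^2 Q$, and since $\lambda\|h_1\|_{L^2} \le \|V\|_{W_\lambda}$ while this term lands in the $L^2$-component of ${\mc E}_L$ with weight $1$, the estimate is harmless;
\item[(c)] the crucial term $\lambda^2 dQ \cdot V$ in the second row, whose first variation is $\lambda^2 \nabla^2 Q \cdot V$ landing in the second factor of ${\mc E}_L$, which is equipped with weight $\lambda^{-1}$ in $L_\lambda$.
\end{enumerate}
For (c) the pointwise bound $|\lambda^2 \nabla^2 Q(\cdot) V|\leq C\lambda^2|V|$ combined with the weight $\lambda^{-1}$ in $L_\lambda$ produces a factor $\lambda$; this is then matched against the weight $\lambda$ on $h_1$ inside $\|V\|_{W_\lambda}$, because the offending direction in $V$ is precisely the one along $\nabla Q$. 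The resulting bound is again of the form $c\|V\|_{W_\lambda}$, with $c$ independent of $\lambda$.

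Second, once the pointwise Lipschitz bound on the difference of linearizations is in hand, I would apply it to an arbitrary test element $\xi\in {\mb R}\oplus T_{\rho_{\ov y}}{\mc B}$ with $\|\xi\|_{W_\lambda}\le 1$ and estimate $\|(D_V^{\lambda,+} - D_{\rho_{\ov y}}^{\lambda,+})\xi\|_{L_\lambda}$ block by block using the decomposition \eqref{eqn810} and the definitions \eqref{eqn812}, \eqref{eqn813}. The $L^\infty$-bound on $V$ that is needed to make all geodesic expansions valid follows from $\|V\|_{W_\lambda}\le \epsilon$ by Sobolev embedding (the un-weighted $W^{1,2}\hookrightarrow L^\infty$ on ${\mb R}$), provided $\epsilon$ is small and $\lambda\geq \Lambda$ is large enough that the weighted norm dominates the un-weighted $W^{1,2}$ norm in the directions that matter. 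This provides the required smallness to control remainders in the Taylor expansion of $\nabla F$, $\ov p\nabla Q$, and $Q$ around $\rho_{\ov y}$.

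The main obstacle will be the bookkeeping for term (c): one must verify that every appearance of $\lambda^2$ is paired, via the chain rule, with exactly one factor from the domain weight on the $L$-direction and one factor from the target weight on the second row. Equivalently, one has to check that the variation of $\lambda^2(Q(x)-a)$ never differentiates only in the $L^\bot$-direction more than once without picking up a compensating $h_1$ (which carries the weight $\lambda$), and that the mixed partial involving $h_2$ and $\ov V$ is $\lambda$-independent because $h_2$ does not enter the $Q$-term at all. Once this schematic is checked, choosing $\Lambda$ so large that the auxiliary error $c\lambda^{-1}$ already present in $\|{\rm Id} - D_{\rho_{\ov y}}^+ Q_{\rho_{\ov y}}^+\|_\lambda$ is absorbed, and $\epsilon$ so small that higher-order remainders in the Taylor expansion are controlled by $\|V\|_{W_\lambda}$, finishes the argument.
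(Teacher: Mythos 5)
The paper's own ``proof'' is a one-line deferral to \cite[Lemma 9]{Lagrange_multiplier}, so there is no in-paper argument to compare against; your proposal is a reasonable unpacking of the computation that lemma would carry out, and the overall strategy (write the difference of linearizations in the chart $\Phi^B$, split into blocks via \eqref{eqn810}, and show that every $\lambda^2$ coming from the second row of ${\mc F}_{\wt W}^\lambda$ is cancelled by the $\lambda^{-1}$ on the second slot of $L_\lambda$ together with a single $\lambda$ from the domain weight) is the right one.

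Two parts of your bookkeeping are imprecise enough that a reader could not verify them as written, and I want to flag them explicitly. First, your claim in (c) that ``the offending direction in $V$ is precisely the one along $\nabla Q$'' is not quite true. In the chart write $Q-a=\tilde q(y,b)$ with $\tilde q(y,0)=0$; the difference of linearizations in the second row is pointwise
$\lambda^2\bigl[\partial_y\tilde q(y_V,b_V)\cdot\ov\xi+(\partial_b\tilde q(y_V,b_V)-\partial_b\tilde q(\ov y,0))\cdot\xi_1\bigr]$,
and the second bracket is $O(|\ov V|+|h_1^V|)$, i.e.\ the \emph{tangential} part $\ov V$ does enter. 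The reason this term is still harmless is that it is paired with $\xi_1$, which carries the domain weight $\lambda$, so $\|\xi_1\|_{L^2}\le\lambda^{-1}\|\xi\|_{W_\lambda}$ eats the extra $\lambda$. The correct statement is that the $\lambda$-weight is absorbed either by $h_1^V$ or by $\xi_1$, never by the ${\mb C}$-factor $h_2$.

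Second, and more important, you invoke Sobolev embedding ``the un-weighted $W^{1,2}\hookrightarrow L^\infty$'' without noting that the weighted norms \eqref{eqn812} distort the Sobolev constants: from $\|h_2\|_{L^2}+\lambda^{-1}\|h_2'\|_{L^2}\le\|V\|_{W_\lambda}$ one only gets $\|h_2\|_{L^\infty}\lesssim\sqrt\lambda\,\|V\|_{W_\lambda}$. If you ever put $L^\infty$ on an $h_2$-factor the estimate degenerates to $c\sqrt\lambda\|V\|_{W_\lambda}$, which is not what the lemma asserts. So when estimating a bilinear term $|V^{(i)}|\,|\xi^{(j)}|$ in $L^2$, one must always place the $L^\infty$-norm on the factor that is \emph{not} an $h_2$ (the tangential pieces have $\lambda$-uniform $L^\infty$-bound, and $h_1$ even has $\|h_1\|_{L^\infty}\lesssim\lambda^{-1/2}\|\cdot\|_{W_\lambda}$). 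This forces you to check that no term of the form $h_2^V\cdot h_2^\xi$ arises; it does not, because ${\mc F}_{\wt W}^\lambda$ is affine in $p$ (the second row contains only $\dot p$ and the first row is linear in $\ov p$), but this must be said, as it is exactly where the argument would otherwise fail. With those two points made explicit, the remainder of your plan carries through and produces the claimed $\lambda$-uniform Lipschitz constant.
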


\begin{proof}
Similar to \cite[Lemma 9]{Lagrange_multiplier} and the detail is left to the reader.
\end{proof}

Therefore, one can apply the implicit function theorem and one derives that for each sufficiently large $\lambda$, there is a unique $\xi_\lambda^+ = (a_\lambda, \xi_\lambda) \in {\bf Image}( Q_{\rho_y}^+) \subset {\mb R} \oplus {\mc  T}_{\rho_y} {\mc B}$ such that
\beqn
\big( a_\lambda, \exp_{\rho_y} \xi_\lambda \big) \in \big( {\mc F}_{\tilde{W}}^{\lambda,+} \big)^{-1}(0).
\eeqn
We also know that $a_\lambda = 0$. Hence it defines a map $\Psi_y^+: [\Lambda, +\infty] \to \ov{ {\bm {\mc S}}_{\tilde{W}} }$ by $\lambda \mapsto [ \lambda, \Phi^B( \xi_\lambda) ]$. Moreover, one can show that it is a homeomorphism onto a neighborhood of $[+\infty, y(s)]$ inside $\ov{ {{\bm {\mc S}}}_{\tilde{W}}} $. The details are left to the reader. Hence $\Psi_y^+$ provides a boundary chart on $\ov{ {{\bm {\mc S}}_{\tilde{W}}} } $ and we have finished the proof of Proposition \ref{prop53}.

\section{Proof of Proposition \ref{prop47}}\label{section6}

In this section we prove the Picard--Lefschetz formula (Proposition \ref{prop47}). We think it is necessary to have a detailed discussion of orientations, which determines the sign of the bifurcation term appearing in the Picard--Lefschetz formula and the wall-crossing formula (Theorem \ref{thm46}). Our discussion will still be used in the companion paper \cite{Tian_Xu_4}.

\subsection{Linear algebra}

First we need to declare our rule of orienting operators. Let $F: X \to Y$ be a Fredholm operator between Banach spaces $X$, $Y$. The determinant line of $F$ is defined as the real one-dimensional vector space
\beqn
\det F := \det {\bf Ker} F \otimes (\det {\bf Coker} F)^\vee.
\eeqn
An orientation of $F$ is a homotopy class of trivializations of this space.

More generally, for any continuous family of Fredholm operators ${\mc F} = (F_a)_{a \in A}$ where $A$ is a topological space, there is a determinant line bundle over $A$. If $A$ is contractible, then $\det {\mc F}$ is trivial and a trivialization $\varphi_A: \det {\mc F} \to A \times {\mb R}$ can be easily write down if ${\bf dim} ( {\bf Ker} F_a )$ is a constant for $a \in A$. In this case, we say $\theta_1 \in \det F_{a_1}$ and $\theta_2 \in \det F_{a_2}$ are {\it in the same orientation} in $\det {\mc F}$ if the second factors of $\varphi_A (\theta_1), \varphi_A(\theta_2)$ are of the same sign. 

It becomes complicated when the dimensions of the kernels jump. For a Fredholm operator $F: X \to Y$, let $F^{(k)}_0 : {\mb R}^k \oplus X \to {\mb R}^k \oplus Y$ be the operator $F^{(k)}_0 ( v, x)  = (0, F(x))$. We define $\psi_0^{(k)}: \det F \to \det F^{(k)}_0$ by
\beqn
\psi_0^{(k)} \Big( \bigwedge_{i=1}^{m} x_{i} \wedge \bigwedge_{j=1}^{n} y_{j}^* \Big) = \Big(	 \bigwedge_{l=1}^{k} (e_l, 0) \wedge \bigwedge_{i=1}^{m} (0, x_{i}) \Big) \otimes \Big( \bigwedge_{l=1}^{k} (e^l, 0) \wedge \bigwedge_{j=1}^{n} (0, y_{j}^*) \Big).
\eeqn
Here $x_1, \ldots, x_m$ is a basis of ${\bf Ker} F$, $y_1, \ldots, y_n$ is a basis of ${\bf Coker} F$ and $y_1^*, \ldots, y_n^*$ is the dual basis; $e_1, \ldots, e_k$ is a basis of ${\mb R}^k$ and $e^1, \ldots, e^k$ is its dual basis. We also define $F_1^{(k)}(v, x) = (v, F(x))$ and define $\psi_1^{(k)}: \det F \to \det F^{(k)}_1$ by
\beqn
\psi_1^{(k)} \Big( \bigwedge_{i=1}^{m} x_{i} \wedge \bigwedge_{j=1}^{n} y_{j}^* \Big) = \bigwedge_{i=1}^{m} (0, x_{i}) \otimes \bigwedge_{j=1}^{n} (0, y_{j}^*).
\eeqn
The basic convention is that we regard the orientation of $\theta \in \det F$ as ``the same'' as those of $\psi_0^{(k)}(\theta) \in  F_0^{(k)}$ and $\psi_1^{(k)}(\theta) \in \det F_1^{(k)}$.

Further, let ${\mc G}^{(k)}(X, Y)$ be the space of linear maps $G: {\mb R}^k \oplus X \to {\mb R}^k \oplus Y$ of the form
\beqn
G = \left[ \begin{array}{cc} G_1 & G_2 \\ G_3 & 0 \end{array} \right].
\eeqn
Then let $F_G: {\mb R}^k \oplus X \to {\mb R}^k \oplus Y$ be $F_0^{(k)} + G$. This gives a family of Fredholm operators over the contractible space ${\mc G}^{(k)}(X, Y)$ and $\det F_G$ is trivial over ${\mc G}^{(k)}(X, Y)$. The $F_0^{(k)}$ and $F_1^{(k)}$ above are obtained in this way by special elements in ${\mc G}^{(k)}(X, Y)$
\beqn
G_0^{(k)} = 0,\ G_1^{(k)} = \left[ \begin{array}{cc} I_k & 0 \\ 0 & 0 \end{array}\right] 
\eeqn

Given a continuous family of Fredholm operators $F_T: X \to Y$, $T\in [0, 1]$ such that 
\beqn
k = {\bf dim} \big( {\bf Ker} F_0 \big) - {\bf dim} \big( {\bf Ker} F_1 \big) = \max \big\{ {\bf dim} \big( {\bf Ker} F_{T_1} \big) - {\bf dim}  \big( {\bf Ker} F_{T_2} \big) \ |\ 0 \leq T_1, T_2 \big\},
\eeqn
we can extend the family to a family
\beqn
F_{T, G}: {\mb R}^k \oplus X \to {\mb R}^k \oplus Y,\ T \in [0, 1],\ G \in {\mc G}^{(k)} (X, Y).
\eeqn
Two nonzero elements $\theta_1 \in \det F_1$ and $\theta_0 \in \det F_0$ are called {\it in the same orientation} if there is a curve $G: [0, 1] \to {\mc G}^{(k)}(X, Y)$ such that the following conditions hold.
\begin{enumerate}

\item $G(0) = G_1^{(k)}$, $G(1) = G_0^{(k)}$ and ${\bf dim} ( {\bf Ker} F_{T, G(T)} )$ is a constant.

\item For the family ${\mc F} = \{F_{T, G(T)}\ |\ T \in [0,1]\}$, the elements $\psi_1^{(k)}(\theta_0) \in \det F_{0, G(0)}$ and $\psi_0^{(k)}(\theta_1) \in \det F_{1, G(1)}$ are in the same orientation in $\det {\mc F}$.
\end{enumerate}
With in this section, we denote $\theta_1 \sim \theta_2$ if they are in the same orientation.

We treat the following special case under the above convention. 
\begin{lemma}\label{lemma61}
Let $F_T: X \to Y$, $T\geq 0$ be a family of Fredholm operators of the form $F_T = F + TP$ where $P: X \to Y$ is of rank one. Assume the following. 
\begin{enumerate}

\item $\{ v_1, \ldots, v_m, u \}$ is a basis of ${\bf Ker} F_0$, and $\{ w_1, \ldots, w_n, P(u) \}$ represents a basis of ${\bf Coker} F_0$, with dual basis $\{ w_1^*, \ldots, w_n^*, P(u)^* \}$.

\item For all $T> 0$, $\{ v_1, \ldots, v_m \}$ is a basis of ${\bf Ker} F_T$, $\{ w_1, \ldots, w_n \} \subset Y$ represents a basis of ${\bf Coker} F_T$, and $\{ w_1^*, \ldots, w_n^*\}$ is the dual basis of $({\bf Coker} F_T)^\vee$. 

\end{enumerate}
Denote
\beqn
\begin{split}
\theta_T: &\ = \bigwedge_{i=1}^m v_i \otimes \bigwedge_{j=1}^n w_i^* \in \det F_T,\ (T>0);\\
\theta_0: &\ = \Big( u \wedge \bigwedge_{i=1}^m v_i \Big) \otimes \Big( P(u)^* \wedge \bigwedge_{j=1}^n w_i^* \Big)\in \det F_0.
\end{split}
\eeqn
Then $\theta_1$ and $\theta_0$ are in the same orientation.
\end{lemma}

\begin{proof}
Without loss of generality, assume that $n = 0$. Namely, $F$ has one-dimensional cokernel spanned by $P(u)$. Define $G_{1, T}: {\mb R} \to {\mb R}$, $G_{2, T}: X \to {\mb R}$ and $G_{3, T}: {\mb R} \to Y$ by
\beqn
G_{1, T}(a ) = (1-T) a,\ \ G_{2, T} (x) = P(u)^* F_T(x),\ \ G_{3, T}(a) = (1-T) a P(u).
\eeqn
Define $F_{T, G(T)}:= F_T + G_{1, T} + G_{2, T} + G_{3, T}$. We can check that its kernel is spanned by $\{ ( Te, -(1-T) u) \} \cup \{ (0, v_i)\ |\ i = 1, \ldots, m\}$ and its cokernel is spanned by $(e^*, - P(u)^*)$, where $e \in {\mb R}$ is the standard basis and $e^*$ is its dual basis. Then
\beqn
\theta_{T, G(T)}:= \Big[ (Te, -(1-T) u) \wedge \bigwedge_{i=1}^m (0, v_i) \Big] \otimes \Big[ ( e^*, - P(u)^* ) \Big] \in \det F_{T, G(T)},\ T \in [0, 1].
\eeqn
is a nowhere-vanishing section of $\det F_{T, G(T)}$. Furthermore, notice that 
\beqn
F_{1, G(1)} = \left[ \begin{array}{cc}  0 & P(u)^*  P \\
                                        0 & F + P  \end{array} \right] 
\eeqn
can be connected by deforming the upper-right corner to $F_0^{(k)}$ without changing the dimension of the kernel and the element $\theta_{1, G(1)}$ has the same sign as
\beqn
\psi_0^{(k)} (\theta_1) = \Big[ e \wedge \bigwedge_{i=1}^m (0, v_i)\Big] \otimes \Big[ (e^*, 0) \Big] \in \det F_{1,G(1)};
\eeqn
and
\beqn
F_{0, G(0)} = \left[ \begin{array}{cc} 1 & 0 \\ P(u) & F 
\end{array} \right]
\eeqn
can be connected to $F_1^{(k)}$ by deforming the lower-left corner without changing the dimension of the kernel and the element $\theta_{0, G(0)}$ has the same sign as 
\beqn
\psi_1^{(k)}(\theta_0) = \Big[ (0, u) \wedge \bigwedge_{i=1}^m (0, v_i) \Big] \otimes \Big[ (0, P(u)^* ) \Big] \in \det F_{0, G(0)}.
\eeqn
Then by definition, $\theta_1$ and $\theta_0$ are in the same orientation.
\end{proof}

\subsection{Orientation of BPS solitons and Picard--Lefschetz formula}

We will prove the Picard--Lefschetz formula in a more general situation. Let $X$ be a Hermitian manifold and $F: X \to {\mb C}$ be a holomorphic Morse function. For $\upsilon \in {\bf Crit} F$, its unstable manifold $W^u_\upsilon$ can be identified with the solution space of the ODE
\beq\label{eqn61}
\dot{x} (s) + \nabla F (x (s)) = 0,\ s\in (-\infty, 0],\ \lim_{s \to -\infty} x (s) = \upsilon.
\eeq
It is an $N$-dimensional smooth manifold homeomorphic to an $N$-disk. Choosing an orientation on $W^u_\upsilon$ is the same as choosing, for each solution $\sigma_- (s)$ of \eqref{eqn61}, an orientation of the operator
\beqn
D_{\sigma_-}: W^{1, p}( (-\infty, 0], \sigma_-^* TX ) \to L^p( (-\infty, 0], \sigma_-^* TX),\ D_{\sigma_-}( \xi) = \nabla_s \xi + \nabla^2 F (\sigma_-) \cdot \xi.
\eeqn
Here $p \geq 2$ and the orientation of $D_{\sigma_-}$ is consistent among all choices of Sobolev norms. Similarly, consider $\kappa \in {\bf Crit} F$ and its stable manifold $W^s_\kappa$. It is the solution space of
\beq\label{eqn62}
\dot{x}(s) + \nabla F (x(s)) = 0,\  s\in [0, +\infty),\ \lim_{s \to +\infty} x(s) = \kappa.
\eeq
An orientation on $W^s_\kappa$ is the same as an orientation of the linearized operator of each solution $\sigma_+$ to \eqref{eqn62}, which reads
\beqn
D_{\sigma_+}: W^{1, p}([0, +\infty), \sigma_+^* TX ) \to L^p([0, +\infty), \sigma_+^* TX ),\ D_{\sigma_+} = \nabla_s \xi + \nabla^2 F (\sigma_+) \cdot \xi.
\eeqn

Suppose $\upsilon$ and $\kappa$ are nondegenerate, and ${\bf Im} F (\upsilon) = {\bf Im} F (\kappa)$, ${\bf Re} F (\upsilon) > {\bf Re} F (\kappa)$. Then the equation for BPS soliton connecting $\upsilon$ and $\kappa$ is	
\beqn
\dot{x}(s) +\nabla F (x(s)) = 0,\ s\in (-\infty, +\infty),\ \lim_{s \to -\infty} x(s) = \upsilon,\ \lim_{s \to +\infty} x(s) = \kappa.
\eeqn
Let $\tilde{\mc S}(\upsilon, \kappa)$ be the space of solutions and ${\mc S}(\upsilon, \kappa) = \tilde{\mc S}(\upsilon, \kappa)/ {\mb R}$. By choosing orientations on $W^u_\upsilon$ and $W^s_\kappa$ (independently), one obtains an orientation of the linearization $D_\sigma$ along any solution $\sigma(s)$ as follows. Let $\sigma_\pm = \sigma|_{{\mb R}_\pm}$. Consider the family of operators
\beqn
\begin{split}
\tilde{D}_\sigma^T: W^{1, p}((-\infty, 0], \sigma_-^* TX ) & \oplus W^{1, p}([0, +\infty), \sigma_+^* TX ) \to L^p({\mb R}, \sigma^* TX ) \oplus T_{\sigma(0)} X\\
              \tilde{D}_\sigma^T \big( \xi_-, \xi_+ \big) & = \big( D_{\sigma_-} \xi_- + D_{\sigma_+} \xi_+,\ T(\xi_- (0) - \xi_+ (0)) \big).
\end{split}
\eeqn

\begin{lemma}\label{lemma62}
For any $T>0$, there are canonical isomorphisms
\beq\label{eqn63}
{\bf Ker} \tilde{D}_\sigma^T \simeq {\bf Ker} D_\sigma,
\eeq
\beq\label{eqn64}
{\bf Coker} (\tilde{D}_\sigma^T)^\vee \simeq {\bf Image}( \tilde{D}_\sigma^T )^\bot \simeq {\bf Image} (D_\sigma)^\bot \simeq  {\bf Coker} (D_\sigma)^\vee.
\eeq
\end{lemma}

\begin{proof}
For $T>0$, $(\xi_-, \xi_+) \in {\bf Ker} \tilde{D}_\sigma^T$ if and only if $\xi_-$ and $\xi_+$ are the restrictions of some $\xi\in W^{1, p}({\mb R}, \sigma^* TX)$. This gives the isomorphism ${\bf Ker} \tilde{D}_\sigma^T \simeq {\bf Ker} D_\sigma$. Moreover, if $\eta \in {\bf Image} (D_\sigma)^\bot$, then for $\xi_- \in W^{1, p}( (-\infty, 0], \sigma_-^* TX)$ and $\xi_+ \in W^{1, p}([0, +\infty), \sigma_+^* TX)$,
\beqn
\langle D_{\sigma_\pm} \xi_\pm, \eta \rangle = \mp \langle \xi_\pm (0), \eta(0) \rangle.
\eeqn
Therefore $\eta \mapsto (T \eta, - \eta(0))$ induces an isomorphism ${\bf Image} (D_\sigma)^\bot \simeq {\bf Image} (\tilde{D}_\sigma^T)^\bot$.
\end{proof}

Since $T_{\sigma(0)} X$ is canonically oriented by the complex structure, Lemma \ref{lemma62} induces the following chain of isomorphisms
\begin{multline}\label{eqn65}
\det D_\sigma \simeq \det \tilde{D}_\sigma^T \simeq \det \tilde{D}_\sigma^0 \simeq \det ( D_{\sigma_-}, D_{\sigma_+}) \simeq \det ( {\bf Ker} D_{\sigma_-} \oplus {\bf Ker} D_{\sigma_+}) \\
\simeq  \det {\bf Ker} D_{\sigma_-} \otimes \det {\bf Ker} D_{\sigma_+} \simeq \det D_{\sigma_-} \otimes \det D_{\sigma_+}.
\end{multline}

\begin{rem}
If we regard $\sigma$ as a map from ${\mb R} \times S^1$ which is independent of the second coordinate, then an orientation of $D_\sigma$ also induces an orientation of the linearized operator over the cylinder. By the result of \cite{Floer_Hofer_orientation}, this means choosing orientations on the unstable and stable manifolds induces the so-called coherent orientations on the moduli space of solitons.
\end{rem}

It is convenient to assume that $X$ is K\"ahler and $\sigma$ is maximally transverse. This assumption doesn't affect the generality when discussing orientations. Then for each $\sigma \in \tilde{\mc S}(\upsilon, \kappa)$, $J\dot\sigma$ spans ${\bf Image} (D_\sigma)^\bot$ and we have a distinguished element
\beqn
\dot\sigma \otimes ( J \dot\sigma) \in \det D_\sigma.
\eeqn
\begin{defn}\label{defn64}
For $\sigma \in \tilde{\mc S}(\upsilon, \kappa)$, define ${\bf Sign}(\sigma) \in \{ \pm 1\}$ to be the sign of $\dot \sigma \otimes (J \dot \sigma)$ with respect to the orientation of $D_\sigma$ induced by \eqref{eqn65}. This makes ${\mc S}(\upsilon, \kappa)$ an oriented zero-dimensional manifold. Define
\beqn
\# {\mc S}(\upsilon, \kappa) = \sum_{[\sigma] \in {\mc S}(\upsilon, \kappa)}  {\bf Sign} (\sigma).
\eeqn
\end{defn}

Now consider a smooth family of holomorphic functions $F_\iota: X \to {\mb C}$ with $\iota \in [\iota_-, \iota_+]$. Let the critical points of $F_\iota$ be $\kappa_{k, \iota} \in {\bf Crit} F_\iota$. We make the following assumptions.
\begin{enumerate}

\item For each $\iota$, $F_\iota$ is Morse.

\item For $\iota \neq \iota_0 \in (\iota_-, \iota_+)$, $F_\iota$ is strongly regular, i.e., the imaginary parts ${\bf Im} F_{\iota}( \beta_{k, \iota})$ for all $k$ are distinct. 

\item At $\iota =\iota_0$, there are two critical points $\upsilon_{\iota_0}, \kappa_{\iota_0} \in {\bf Crit} F_{\iota_0}$
such that
\beqn
{\bf Im} F_{\iota_0} (\upsilon_{\iota_0} ) = {\bf Im} F_{\iota_0} (\kappa_{\iota_0} ),\ {\bf Re} F_{\iota_0} (\upsilon_{\iota_0} ) > {\bf Re} F_{\iota_0} (\kappa_{\iota_0} ),
\eeqn
\beqn
\ \left. \frac{d}{d\iota} \right|_{\iota = \iota_0} \Big[ {\bf Im} F_{\iota} (\upsilon_\iota) - {\bf Im} F_\iota (\kappa_\iota ) \Big] \neq 0.
\eeqn
\end{enumerate}

One can identify the unstable manifold of $\upsilon_\iota$ with the space of solutions to the ODE
\beqn
\dot{x}(s) + \nabla F_{\iota} (x(s)) = 0,\ \lim_{s \to -\infty} x(s) = \upsilon_\iota,\ s \in (-\infty, 0].
\eeqn
We choose an orientations of $W_{\upsilon_\iota}^u$ that depends continuously on $\iota$. Then 
\beqn
W_\upsilon^u = \displaystyle \bigsqcup_{\iota_-  \leq \iota \leq \iota_+} W_{\upsilon_\iota}^u
\eeqn
has an induced orientation. It is not compact because the flow lines can break at $\iota = \iota_0$. One can compactify it by adding broken ones. Denote the compactification by $\ov{ W_\upsilon^u }$.

\begin{prop}\label{prop65}
Suppose the moduli space of BPS solitons connecting $\upsilon_{\iota_0}$ and $\kappa_{\iota_0}$ is regular. Then the closure $\ov{ W_\upsilon^u }$ is an oriented manifold with boundary and the boundary is the disjoint union
\beqn
\Big[ W_{\upsilon_{\iota_+}}^u  \Big] \sqcup \Big[ - W_{\upsilon_{\iota_-}}^u \Big] \sqcup \Big[ - (-1)^{\tilde{F}} {\mc S} (\upsilon_{\iota_0}, \kappa_{\iota_0}) \times W_{\kappa_{\iota_0}}^u \Big].
\eeqn
Here ${\mc S}(\upsilon_{\iota_0}, \kappa_{\iota_0} )$ and $W_{\kappa_{\iota_0}}^u$ have their own orientations.
\end{prop}

Notice that ${\mc S}(\upsilon_{\iota_0}, \kappa_{\iota_0})$ is zero-dimensional. Hence this proposition implies the Picard--Lefschetz formula (Proposition \ref{prop47}). Proposition \ref{prop65} is proved in the next subsection. 

\subsection{Proof of Proposition \ref{prop65}}

Consider a Banach manifold $\tilde{\mc B}$ of pairs $(\iota, \psi)$ with $\iota \in (\iota_-, \iota_+)$ and $\psi \in W^{1, p}_{\rm loc}((-\infty, 0], X)$ in which $\psi (s)$ is asymptotic to $\upsilon_\iota$ in the $W^{1, p}$-sense. Consider the Banach bundle $\tilde{\mc E} \to \tilde{\mc B}$ whose fibre over $\tilde\psi = (\iota, \psi)$ is $L^p( (-\infty, 0], \psi^* TX)$. Let $\tilde{\mc F}: \tilde{\mc B} \to \tilde{\mc E}$ be the section $\tilde{\mc F}(\iota, \psi) = \dot\psi + \nabla F_\iota ( \psi)$. Then $W_\upsilon^u \simeq \tilde{\mc F}^{-1}(0)$ and the orientation on $W_\upsilon^u$ is given by the orientation on the linearized operator
\beqn
\tilde{D}_{\iota, \psi}: {\mb R} \times W^{1, p}({\mb R}, \psi^* TX) \to L^p({\mb R}, \psi^* TX).
\eeqn

We need to construct a local chart near any singular flow line. Let $\sigma \in \tilde{\mc S} (\upsilon_{\iota_0}, \kappa_{\iota_0} )$ and $b \in W^u_{\kappa_{\iota_0} }$. For $\iota - \iota_0$ sufficiently small, there are families of vectors
\beqn
\eta^\iota_- \in T_{\upsilon_{\iota_0} } X,\ \eta^\iota_+ \in T_{\kappa_{\iota_0} } X
\eeqn
such that
\beqn
\exp_{\upsilon_{\iota_0} } \eta^\iota_- = \upsilon_\iota,\ \exp_{\kappa_{\iota_0} } \eta^\iota_+ = \kappa_\iota.
\eeqn
For $S>0$ sufficiently large, we can extend $\eta_\pm^\iota$ to vector fields
\beqn
\tilde\eta_-^\iota \in \Gamma((-\infty, -S], \sigma^* TX),\ \tilde\eta_+^\iota \in \Gamma([S, +\infty), \sigma^* TX),
\eeqn
which are asymptotic to $\eta_\pm^\iota$ in the $W^{1, p}$-sense. Choosing cut-off functions $\beta_-$ (supported on $(-\infty, -S+1]$) and $\beta_+$ (supported in $[S-1, +\infty)$), define
\beqn
\tilde\eta^\iota = \beta_- \tilde\eta^\iota_- + \beta_+ \tilde\eta^\iota_+.
\eeqn
Denote
\beqn
\tilde\eta_\pm =  \left. \frac{d}{d\iota } \right|_{\iota = \iota_0} \tilde\eta_\pm^\iota,\ \tilde\eta =  \left. \frac{d}{d\iota} \right|_{\iota = \iota_0} \tilde\eta^\iota.
\eeqn
Then we have the linearized operator
\beqn
\begin{split}
\tilde{D}_\sigma: &\ {\mb R} \{ \partial_\iota \} \times W^{1, p}({\mb R}, \sigma^* TX ) \to L^p( {\mb R}, \sigma^* TX ),\\
               &\ \tilde{D}_\sigma(a \partial_\iota , \xi) = D_\sigma(\xi) + L_\sigma(a \partial_\iota )
\end{split}
\eeqn
where $L_\sigma(a \partial_\iota ) = D_\sigma ( a \tilde\eta )$. Since we assumed that $\sigma$ is maximally transverse, it is easy to see that $\tilde{D}_\sigma$ is surjective.

We can perform the gluing construction in the standard way. The following lemma is left to the reader.
\begin{lemma}\label{lemma66}
Given a singular trajectory represented by $\sigma \in \tilde{\mc S}(\upsilon_{\iota_0}, \kappa_{\iota_0})$ and $b \in W_{\kappa_{\iota_0}}^u$, there exists $\epsilon>0$, such that for each $t \in (0, \epsilon)$ and $\sigma' \in B_\epsilon( \sigma, \tilde{\mc S}(\upsilon_{\iota_0}, \kappa_{\iota_0} ))$, $b' \in B_\epsilon (b, W_{\kappa_{\iota_0} }^u)$, there is a solution
\beqn
\tilde\psi_{t, \sigma', b'} = (\iota_{t, \sigma', b'},  \psi_{t, \sigma', b'}) \in \tilde{\mc F}^{-1}(0).
\eeqn
Moreover, this family satisfies the following conditions.

\begin{enumerate}
\item The map $\Phi: (0, \epsilon) \times \{ \sigma\} \times B_\epsilon(b, W_{\kappa_0}^u) \to W_\upsilon^u$ defined by
\beq\label{eqn66}
\Phi (t, \sigma, b') := \tilde\psi_{t, \sigma, b'}
\eeq
extends continuously as $t \to 0$ to $(\sigma, b')$ and the extension 
\beqn
\Phi: [0, \epsilon) \times \{ \sigma\} \times B_\epsilon (b, W_{\kappa_0}^u ) \to \ov{ W_\upsilon^u}
\eeqn
is a homeomorphism onto a neighborhood of the singular object $(\sigma, b)$.

\item Fix $t\in (0, \epsilon)$. The map $\Psi: \{t\} \times B_\epsilon ( \sigma, \tilde{\mc S}(\upsilon_0, \kappa_0)) \times B_\epsilon( b, W_{\kappa_0}^u ) \to W_\upsilon^u$ defined by
\beqn
\Psi(t, \sigma', b') \mapsto \tilde\psi_{t, \sigma', b'}
\eeqn
is a homeomorphism onto a neighborhood of $\tilde\psi_{t, \sigma, b}$.

\item The orientations on $W_\upsilon^u$ induced from $\Phi$ and $\Psi$ are the same.
\end{enumerate}
\end{lemma}

We call the orientation on $W_\upsilon^u$ in Lemma \ref{lemma66} ``induced from the boundary chart.'' We need to see if it is consistent with the interior orientation. Indeed, define $\tilde{D}_\sigma^T = D_\sigma + T L_\sigma$ and we have isomorphisms
\beqn
\det \tilde{D}_\sigma \simeq \det \tilde{D}_\sigma^T \simeq \det \tilde{D}_\sigma^0.	
\eeqn
Using Lemma \ref{lemma61}, the direction represented by $(0, \dot\sigma) \in {\bf Ker} \tilde{D}_\sigma$ is identified with
\beqn
(\partial_\iota, 0) \wedge (0, \dot\sigma)  \otimes L_\sigma ( \partial_\iota, 0) \sim {\bf Sign} \big\langle L_\sigma( \partial_\iota), J \dot\sigma  \big\rangle\Big[ (\partial_\iota, 0) \wedge (0, \dot\sigma) \otimes ( J \dot \sigma)\Big] =: \theta^0 \in \det \tilde{D}_\sigma^0.
\eeqn
Choose a positive volume form $\omega_b$ on ${\bf Ker} D_b$ and a positive volume form $\omega_\psi$ on ${\bf Ker} D_\psi$ for any $(\iota, \psi)$ in the image of the $\Phi$ of \eqref{eqn66}. Since we have the canonical identification $\det D_\sigma \otimes \det D_b\simeq \det D_\psi$, $\theta^0 \wedge \omega_b \in \det \tilde{D}_\sigma^0 \otimes \det D_b$ is identified canonically with
\beqn
 {\bf Sign} \big\langle L_\sigma(\partial_\iota ), J \dot\sigma \big\rangle \cdot {\bf Sign} (\sigma) \cdot \Big[ ( \partial_\iota, 0) \otimes  \omega_\psi \Big] \in \det \tilde{D}_{\iota, \psi}.
\eeqn
Hence the oriented boundary of $\ov{ W_\upsilon^u }$ is
\beqn
\partial  W_\upsilon^u =  W_{\upsilon_{\iota_+} }^u  \sqcup \Big[ - W_{\upsilon_{\iota_-}}^u \Big]  \sqcup  \Big[ - {\bf Sign}\big\langle L_\sigma (\partial_\iota), J \dot\sigma \big\rangle \big( {\mc S}(\upsilon_{\iota_0}, \kappa_{\iota_0} ) \times W_{\kappa_{\iota_0} }^u \big) \Big].
\eeqn

Lastly, we need to evaluate the sign of $\langle L_\sigma(\partial_\iota), J \dot\sigma \rangle$. We assume that
\beqn
\left. \frac{d}{d\iota}\right|_{\iota = 0} \Big[ {\bf Im} F_\iota ( \upsilon_\iota ) - {\bf Im} F_\iota (\kappa_\iota ) \Big] > 0 \Longleftrightarrow (-1)^{\tilde{F}} = +1.
\eeqn
since the other case is exactly the opposite. Then by definition,
\beqn
\begin{split}
L_\sigma(\partial_\iota ) &\ = \nabla_s \tilde\eta + \nabla^2 F_{\iota_0} (\sigma) \tilde\eta\\
&\ = \nabla_s ( \beta_- \tilde\eta_- + \beta_+ \tilde\eta_+ ) + \nabla^2 F_{\iota_0} (\sigma) (\beta_- \tilde\eta_- + \tilde\beta_+ \eta_+) \\
&\ = \beta_-' \tilde\eta_- + \tilde\beta_+' \eta_+.
\end{split}
\eeqn
Moreover, we know that $J \dot\sigma(s) = - \nabla {\bf Im} F_{\iota_0} ( \sigma(s))$. Therefore,
\beqn
\langle L_\sigma(\partial_\iota), J \dot\sigma \rangle = \langle \beta_-' \tilde\eta_-, - \nabla {\bf Im} F_{\iota_0} \rangle + \langle \beta_+' \tilde\eta_+, - \nabla {\bf Im} F_{\iota_0} \rangle > 0.
\eeqn

\section{Topological Virtual Orbifolds and Virtual Cycles}\label{section7}

We recall the framework of constructing virtual fundamental cycles associated to moduli problems. Such constructions, usually called ``virtual technique'', has a long history since it first appeared in algebraic Gromov--Witten theory by \cite{Li_Tian_2}. The current method is based on the topological approach of \cite{Li_Tian}.	

\subsection{Topological manifolds and transversality}

In this subsection we review the classical theory about topological manifolds and (microbundle) transversality. 

\begin{defn} \label{defn71} {\rm (Topological manifolds and embeddings)}
\begin{enumerate}

\item A {\it topological manifold} is a second countable Hausdorff space $M$ which is locally homeomorphic to an open subset of ${\mb R}^n$. 

\item A subset $S \subset M$ is a {\it submanifold} if $S$ equipped with the subspace topology is a topological manifold. 

\item A map $f: N \to M$ between two topological manifold is called a {\it topological embedding} if $f$ is a homeomorphism onto its image. 

\item A topological embedding $f: N \to M$ is called {\it locally flat} if for any $p \in f(N)$, there is a local coordinate $\varphi_p: U_p \to {\mb R}^m$ where $U_p \subset {\mb R}^n$ is an open neighborhood of $p$ such that 
\beqn
\varphi_p (f(N) \cap U_p) \subset {\mb R}^n \times \{0\}.
\eeqn
\end{enumerate}
\end{defn}

In this paper, without further clarification, all embeddings of topological manifolds are assumed to be locally flat. In fact we will always assume (or prove) the existence of a normal microbundle which implies local flatness.

\subsubsection{Microbundles}

The discussion of topological transversality needs the concept of microbundles, which was introduced by Milnor \cite{Milnor_micro_1}.

\begin{defn}\label{defn72} {\rm (Microbundles)}
Let $B$ be a topological space. 
\begin{enumerate}

\item A {\it microbundle} over a $B$ is a triple $(E, i, p)$ where $E$ is a topological space, $i: M \to E$ (the zero section map) and $p: E \to M$ (the projection) are continuous maps, satisfying the following conditions.
\begin{enumerate}
\item $p \circ i = {\rm Id}_M$.

\item For each $b \in B$ there exist an open neighborhood $U \subset B$ of $b$ and an open neighborhood $V \subset E$ of $i(b)$ with $i(U) \subset V$, $j(V) \subset U$, such that there is a homeomorphism $V \simeq U \times {\mb R}^n$ which makes the following diagram commutes.
\beqn
\vcenter{ \xymatrix{  &  V \ar[dd] \ar[rd]^p & \\
                     U \ar[ru]^i \ar[rd]^i & & U \\
                      & U \times {\mb R}^n \ar[ru]^p & } }.
\eeqn
\end{enumerate}

\item Two microbundles $\xi = (E, i, p)$ and $\xi' = (E', i', p')$ over $B$ are {\it equivalent} if there are open neighborhoods of the zero sections $W \subset E$, $W' \subset E'$ and a homeomorphism $\rho: W \to W'$ which is compatible with the structures of the two microbundles.
\end{enumerate}
\end{defn}

Vector bundles and disk bundles are particular examples of microbundles. More generally, an {\it ${\mb R}^n$-bundle} over a topological manifold $M$ is a fibre bundle over $M$ whose fibres are ${\mb R}^n$ and whose structure group is the group of homeomorphisms of ${\mb R}^n$ which fix the origin. Notice that an ${\mb R}^n$-bundle has a continuous zero section, thus an ${\mb R}^n$-bundle is naturally a microbundle. A very useful fact, which was proved by Kister \cite{Kister_1964} and Mazur \cite{Mazur_1964}, says that microbundles are essentially ${\mb R}^n$-bundles.

\begin{thm} \label{thm73} {\rm (Kister--Mazur Theorem)} 
Let $B$ be a topological manifold (or a weaker space such as a locally finite simplicial complex) and $\xi = (E, i, p)$ be a microbundle over $B$. Then $\xi$ is equivalent to an ${\mb R}^n$-bundle, and the isomorphism class of this ${\mb R}^n$-bundle is uniquely determined by $\xi$. 
\end{thm}

However, ${\mb R}^n$-bundles are essentially different from vector bundles. For example, vector bundles always contain disk bundles, which is not true for ${\mb R}^n$-bundles. 

\begin{defn} \label{defn74} {\rm (Normal microbundles)}
Let $f: S \to M$ be a topological embedding.

\begin{enumerate}

\item  A {\it normal microbundle} of $f$ is a pair $\xi = (N, \nu)$ where $N \subset M$ is an open neighborhood of $f(S)$ and $\nu: N \to S$ is a continuous map such that together with the natural inclusion $S \hookrightarrow N$ they form a microbundle over $N$. A normal microbundle is also called a {\it tubular neighborhood}.

\item Two normal microbundles $\xi_1 = (N_1, \nu_1)$ and $\xi_2 = (N_2, \nu_2)$ are {\it equivalent} if there is another normal microbundle $(N, \nu)$ with $N \subset N_1 \cap N_2$ and 
\beqn
\nu_1|_N = \nu_2|_N = \nu.
\eeqn
An equivalence class is called a {\it germ} of normal microbundles (or tubular neighborhoods). 
\end{enumerate}
\end{defn}

For example, for a smooth submanifold $S \subset M$ in a smooth manifold, there is always a normal microbundle. Its equivalence class is not unique though, as we need to choose the projection map. 

\subsubsection{Transversality}

We first recall the notion of microbundle transversality. Let $Y$ be a topological manifold, $X \subset Y$ be a submanifold and $\xi =  (N, \nu)$ be a normal microbundle of $X$. Let $f: M \to Y$ be a continuous map. 

\begin{defn} \label{defn75} {\rm (Microbundle transversality)}
Let $Y$ be a topological manifold, $X \subset Y$ be a submanifold and $\xi_X$ be a normal microbundle of $X$. Let $f: M \to Y$ be a continuous map. We say that $f$ is {\it transverse to $\xi$} if the following conditions are satisfied.
\begin{enumerate}

\item $f^{-1}(X)$ is a submanifold of $M$.

\item There is a normal microbundle $\xi' = (N', \nu')$ of $f^{-1}(X) \subset M$ such that the following diagram commute
\beqn
\xymatrix{ N' \ar[d] \ar[r]^f &  N \ar[d] \\
           f^{-1}(X) \ar[r]   &  X }
\eeqn
and the inclusion $f: N' \to N$ induces an equivalence of microbundles.
\end{enumerate}
More generally, if $C \subset M$ is any subset, then we say that $f$ is transverse to $X$ {\it near} $C$ if the restriction of $f$ to an open neighborhood of $C$ is transverse to $X$. 
\end{defn}
It is easy to see that the notion of being transverse to $\xi$ only depends on the germ of $\xi$. 

\begin{rem}
The notion of microbundle transversality looks too restrictive at the first glance. For example, the line $x = y$ in ${\mb R}^2$ intersects transversely with the $x$-axis in the smooth category, however, the line is not transverse to the $x$-axis with respect to the natural normal microbundle given by the projection $(x,y) \to (x, 0)$. 
\end{rem}

The following theorem, which is of significant importance in our virtual cycle construction, shows that one can achieve transversality by arbitrary small perturbations. 

\begin{thm} \label{thm77} {\rm (Topological transversality theorem)} Let $Y$ be a topological manifold and $X \subset Y$ be a proper submanifold. Let $\xi$ be a normal microbundle of $X$. Let $C \subset D \subset Y$ be closed sets. Suppose $f: M \to Y$ is a continuous map which is microbundle transverse to $\xi$ near $C$. Then there exists a homotopic map $g: M \to Y$ which is transverse to $\xi$ over $D$ such that the homotopy between $f$ and $g$ is supported in an open neighborhood of $f^{-1}( (D \setminus C) \cap X)$. 
\end{thm}

\begin{rem}
The theorem was proved by Kirby--Siebenmann \cite{Kirby_Siebenmann} with a restriction on the dimensions of $M$, $X$ and $Y$. Then Quinn \cite{Quinn_1982} \cite{Quinn} \cite{Freedman_Quinn} completed the proof of the remaining cases. Notice that in \cite{Quinn}, the transversality theorem is stated for an embedding $i: M \to Y$ and the perturbation can be made through an isotopy. This implies the above transversality result for maps as we can identify a map $f: M \to Y$ with its graph $\tilde f: M \to M \times Y$, and an isotopic embedding of $\tilde f$, written as $\tilde g(x) = (g_1(x), g_2(x))$, can be made transverse to the submanifold $\tilde X = M \times X \subset M \times Y$ with respect to the induced normal microbundle $\tilde \xi$. Then it is easy to see that it is equivalent to $g_2: M \to Y$ being transverse to $\xi$. 
\end{rem}

In most of the situations of this paper, the notion of transversality is about sections of vector bundles. Suppose $f: M \to {\mb R}^n$ is a continuous map. The origin $0 \in {\mb R}^n$ has a canonical normal microbundle. Therefore, one can define the notion of transversality for $f$ as a special case of Definition \ref{defn75}. Now suppose $E \to M$ is an ${\mb R}^n$-bundle and 
\beqn
\varphi_U: E|_U \to U \times {\mb R}^n
\eeqn
is a local trivialization. Each section $s: M \to E$ induces a map $s_U: U \to {\mb R}^n$. Then we say that $s$ is transverse over $U$ if $s_U$ is transverse to the origin of ${\mb R}^n$. This notion is clearly independent of the choice of local trivializations. Then $s$ is said to be transverse if it is transverse over a sufficiently small neighborhood of every point of $M$. 

Notice that the zero section of $E$ has a canonical normal microbundle in the total space, and this notion of transversality for sections never agrees with the notion of transversality for graphs of the sections with respect to this canonical normal microbundle. Hence there is an issue about whether this transversality notion for sections behaves as well as the microbundle transversality. 

\begin{thm}\label{thm79}
Let $M$ be a topological manifold and $E \to M$ be an ${\mb R}^n$-bundle. Let $C\subset D \subset M$ be closed subsets. Let $s: M \to E$ be a continuous section which is transverse near $C$. Then there exists another continuous section which is transverse near $D$ and which agrees with $s$ over a small neighborhood of $C$. 
\end{thm}

\begin{proof}
The difficulty is that the graph of $s$ is not microbundle transverse to the zero section, hence we cannot directly apply the topological transversality theorem (Theorem \ref{thm77}). Hence we need to use local trivializations view the section locally as a map into ${\mb R}^n$. For each $p\in D$, choose a precompact open neighborhood $U_p  \subset M$ of $p$ and a local trivialization 
\beqn
\varphi_p: E|_{U_p} \simeq U_p \times {\mb R}^n.
\eeqn
All $U_p$ form an open cover of $D$. $M$ is paracompact, so is $D$. Hence there exists a locally finite refinement with induced local trivializations. Moreover, $D$ is Lindel\"of, hence this refinement has a countable subcover, denoted by $\{ U_i\}_{i=1}^\infty$. Over each $U_i$ there is an induced trivialization of $E$. 

We claim that there exists precompact open subsets $V_i \sqsubset U_i$ such that $\{ V_i\}_{i=1}^\infty$ still cover $D$. We construct $V_i$ inductively. Indeed, a topological manifold satisfies the $T_4$-axiom, hence we can use open sets to separate the two closed subsets 
\begin{align*}
&\ D \setminus \bigsqcup_{i \neq 1} U_i,\ &\ D \setminus U_1.
\end{align*}
This provides a precompact $V_i \sqsubset U_i$ such that replacing $U_1$ by $V_1$ one still has an open cover of $D$. Suppose we can find $V_1, \ldots, V_k$ so that replacing $U_1, \ldots, U_k$ by $V_1, \ldots, V_k$ still gives an open cover of $D$. Then one can obtain $V_{k+1} \sqsubset U_{k+1}$ to continue the induction. We see that $\{V_i\}_{i=1}^\infty$ is an open cover of $D$ because every point $p\in D$ is contained in at most finitely many $U_i$.

Now take an open neighborhood $U_C \subset M$ of $C$ over which $s$ is transverse. Since $M$ is a manifold, one can separate the two closed subsets $C$ and $M \setminus U_C$ by a cut-off function 
\beqn
\rho_C: M \to [-1, 1]
\eeqn
such that
\beqn
\rho_C^{-1}(-1) = C,\ \rho_C^{-1}(1) = M \setminus U_C. 
\eeqn
Define a sequence of open sets
\beqn
C^k = \rho_C^{-1} ([-1, \frac{1}{k+1})). 
\eeqn
Similarly, we can choose a sequence of shrinkings 
\beqn
V_i \sqsubset \cdots \sqsubset V_i^{k+1} \sqsubset V_i^k \sqsubset \cdots \sqsubset U_i.
\eeqn
Define 
\beqn
W^k = C^k \cup \bigcup_{i=1}^k V_i^k
\eeqn
which is a sequence of open subsets of $M$. 

Now we start an inductive construction. First, over $U_1$, the section can be identified with a map $s_1: U_1 \to {\mb R}^n$. By our assumption, $s_1$ is transverse over $U_C \cap U_1$. Then apply the theorem for the pair of closed subsets $\ov{C^1} \cap U_1 \subset (\ov{C^1} \cap U_1) \cup \ov{V_1^1}$ of $U_1$. Then one can modify it so that it becomes transverse near $(\ov{C^1} \cap U_1) \cup \ov{V_1^1}$, and the change is only supported in a small neighborhood of $\ov{V_1^1} \setminus \ov{C^1}$. This modified section still agrees with the original section near the boundary of $U_1$, hence still defines a section of $E$. It also agrees with the original section over a neighborhood of $\ov{C^2}$. Moreover, the modified section is transverse near
\beqn
\ov{W^1}: = \ov{C^1} \cup \ov{V_1^1}.
\eeqn

Now suppose we have modified the section so that it is transverse near 
\beqn
\ov{W^k}:= \ov{C^k} \cup \bigcup_{i = 1}^k \ov{V_i^k},
\eeqn
and such that $s$ agrees with the original section over an open neighborhood of $\ov{C^{k+1}}$. Then by similar method, one can modify $s$ (via the local trivialization over $U_{k+1}$) to a section which agrees with $s$ over a neighborhood of
\beqn
\ov{C^{k+1}} \cup \bigcup_{i=1}^k \ov{V_i^{k+1}}
\eeqn
and is transverse near $\ov{W^{k+1}}$. In particular this section still agrees with the very original section over $C^{k+1}$. 

We claim that this induction process provides a section $s$ of $E$ which satisfies the requirement. Indeed, since the open cover $\{U_i\}_{i=1}^\infty$ of $D$ is locally finite, the value of the section becomes stabilized after finitely many steps of the induction, hence defines a continuous section. Moreover, in each step the value of the section remains unchanged over the open set $\rho_C^{-1}([-1, 0))$. Transversality also holds by construction.
\end{proof}

\begin{cor}
Let $M$ be a topological manifold with or without boundary, and let $s_1, s_2: M \to E$ be two transverse sections which are homotopic. Then the two submanifolds (with or without boundary) $S_1:= s_1^{-1}(0)$ and $S_2:= s_2^{-1}(0)$ are cobordant. 
\end{cor}

\begin{rem}
In the application of this paper, the target pair $(X, Y)$ in the transversality problem is either a smooth submanifold inside a smooth manifold (or orbifolds), or the zero section of a vector bundle. Hence $X$ admits a tubular neighborhood and a unique equivalence class of normal microbundle. In fact the normal microbundle is equivalent to a disk bundle of the smooth normal bundle. Hence in the remaining discussions, we make the stronger assumption that all normal microbundles are disk bundles of some vector bundle. This does not alter the above discussion. For example, the compositions of two embeddings with disk bundle neighborhoods is still an embedding with a disk bundle neighborhood.
\end{rem}

\subsection{Topological orbifolds and orbibundles}

We use Satake's notion of V-manifolds \cite{Satake_orbifold} instead of groupoids to treat orbifolds, and only discuss it in the topological category. In this paper we only consider effective orbifolds. 

\begin{defn}
Let $M$ be a second countable Hausdorff topological space.

\begin{enumerate}

\item Let $x \in M$ be a point. A topological orbifold chart (with boundary) of $x$ consists of a triple $(\tilde U_x, \Gammait_x, \varphi_x)$, where $\tilde U_x$ is a topological manifold with possibly empty boundary $\partial \tilde U_x$, $\Gammait_x$ is a finite group acting continuously  on $( \tilde U_x, \partial \tilde U_x)$ and
\beqn
\varphi_x: \tilde U_x/\Gammait_x \to M
\eeqn
is a continuous map which is a homeomorphism onto an open neighborhood of $x$. Denote the image $U_x = \varphi_x(\tilde U_x/ \Gammait_x) \subset M$ and denote the composition 
\beqn
\tilde \varphi_x: \xymatrix{ \tilde U_x \ar[r] & \tilde U_x/ \Gammait_x \ar[r]^-{\varphi_x} & M}.
\eeqn

\item If $p \in \tilde U_x$, take $\Gammait_p = (\Gammait_x)_p \subset \Gammait_x$ the stabilizer of $p$. Let $\tilde U_p \subset \tilde U_x$ be a $\Gammait_p$-invariant neighborhood of $p$. Then there is an induced chart (which we call a subchart) $( \tilde U_p, \Gammait_p, \varphi_p)$, where $\varphi_p$ is the composition
\beqn
\varphi_p: \xymatrix{ \tilde U_p/\Gammait_p \ar@{^{(}->}[r] & \tilde U_x/\Gammait_x \ar[r]^-{\varphi_p} & M
}.
\eeqn

\item Two charts $(\tilde U_x, \Gammait_x, \varphi_x)$ and $( \tilde U_y, \Gammait_y, \varphi_y)$ are {\it compatible} if for any $p \in \tilde U_x$ and $q \in \tilde U_y$ with $\varphi_x(p) = \varphi_y (q)\in M$, there exist an isomorphism $\Gammait_p \to \Gammait_q$, subcharts $ \tilde U_p \ni p$, $\tilde U_q \ni q$ and an equivariant homeomorphism $\varphi_{qp}: ( \tilde U_p, \partial \tilde U_p) \simeq ( \tilde U_q, \partial \tilde U_q)$.

\item A {\it topological orbifold atlas} of $M$ is a {\it set} $\{ ( \tilde U_\alpha, \Gammait_\alpha, \varphi_\alpha)\ |\ \alpha \in I \}$ of topological orbifold charts of $M$ such that $M = \bigcup_{\alpha\in I} U_\alpha$ and for each pair $\alpha, \beta \in I$, $( \tilde U_\alpha, \Gammait_\alpha, \varphi_\alpha), ( \tilde U_\beta, \Gammait_\beta, \varphi_\beta)$ are compatible. Two atlases are {\it equivalent} if the union of them is still an atlas. A structure of topological orbifold (with boundary) is an equivalence class of atlases. A topological orbifold (with boundary) is a second countable Hausdorff space with a structure of topological orbifold (with boundary).
\end{enumerate}
\end{defn}
We will often skip the term ``topological'' in the rest of this paper.

Now consider bundles. Let $E$, $B$ be orbifolds and $\pi: E \to B$ be a continuous map.
\begin{defn}
A vector bundle chart (resp. disk bundle chart) of $\pi: E \to B$ is a tuple $( \tilde U, F^n, \Gammait, \hat\varphi, \varphi)$ where $F^n = {\mb R}^n$ (resp. $F^n = {\mb D}^n$), $( \tilde U, \Gammait, \varphi)$ is a chart of $B$ and $( \tilde U \times F^n, \Gammait, \hat \varphi)$ is a chart of $E$, where $\Gammait$ acts on $F^n$ via a representation $\Gammait \to GL({\mb R}^n)$ (resp. $\Gammait \to O(n)$). The compatibility condition is required, namely, the following diagram commutes.
\beqn
\vcenter{\xymatrix{
\tilde U \times F^n/ \Gammait \ar[r]^-{\hat \varphi} \ar[d]_{\tilde \pi} & E \ar[d]^{\pi}\\
\tilde U /\Gammait \ar[r]^{\varphi} & B
}}.
\eeqn
If $(\tilde U_p, \Gammait_p, \varphi_p)$ is a subchart of $(\tilde U, \Gammait, \varphi)$, then one can restrict the bundle chart to $\tilde \pi^{-1}(\tilde U_p)$.
\end{defn}

We can define the notion of compatibility between bundle charts, the notion of orbifold bundle structures and the notion of orbifold bundles in a similar fashion as in the case of orbifolds. We skip the details.

\subsubsection{Embeddings}

Now we consider embeddings for orbifolds and orbifold vector bundles. First we consider the case of manifolds. Let $S$ and $M$ be topological manifolds and $E \to S$, $F \to M$ be continuous vector bundles. Let $\phi: S \to M$ be a topological embedding. A {\it bundle embedding} covering $\phi$ is a continuous map $\wh \phi: E \to F$ which makes the diagram
\beqn
\xymatrix{ E \ar[r]^{\wh\phi} \ar[d] & F \ar[d] \\
           S \ar[r]^\phi         & M }
\eeqn
commute and which is fibrewise a linear injective map. Since $\wh \phi$ determines $\phi$, we also call $\wh \phi: E \to F$ a bundle embedding. 

\begin{defn} {\rm (Orbifold embedding)}
Let $S$, $M$ be orbifolds and $f: S \to M$ is a continuous map which is a homeomorphism onto its image. $\phi$ is called an {\it embedding} if for any pair of orbifold charts, $(\tilde U, \Gammait, \varphi)$ of $S$ and $(\tilde V, \Piit, \psi)$ of $M$, any pair of points $p\in \tilde U$, $q \in  \tilde V$ with $\phi (\varphi(p)) = \psi(q)$, there are subcharts $( \tilde U_p, \Gammait_p, \varphi_p) \subset (\tilde U, \Gammait, \varphi)$ and $(\tilde V_q, \Piit_q, \psi_q) \subset ( \tilde V, \Piit, \psi)$, an isomorphism $\Gammait_{p} \simeq \Piit_q$ and an equivariant locally flat embedding $\tilde \phi_{pq}: \tilde U_p \to \tilde V_q$ such that the following diagram commutes.
\beqn
\vcenter{
\xymatrix{ \tilde U_p \ar[d]_{\tilde \varphi_p} \ar[r]^{\tilde \phi_{pq}} & \tilde V_q \ar[d]^{\tilde \psi_q}\\
S \ar[r]^{\phi } & M } }
\eeqn
\end{defn}

\subsubsection{Multisections and perturbations}

The equivariant feature of the problem implies that transversality can only be achieved by multi-valued perturbations. Here we review basic notions and facts about multisections. Our discussion mainly follows the treatment of \cite{Fukaya_Ono}.

\begin{defn} {\rm (Multimaps)}
Let $A$, $B$ be sets, $l\in {\mb N}$, and ${\mc S}^l(B)$ be the $l$-fold symmetric product of $B$. 
\begin{enumerate}

\item An {\it $l$-multimap} $f$ from $A$ to $B$ is a map $f: A \to {\mc S}^l(B)$. For another $a \in {\mb N}$, there is a natural map
\beq\label{eqn71}
m_a: {\mc S}^l(B) \to {\mc S}^{al} (B)
\eeq
by repeating each component $a$ times.

\item If both $A$ and $B$ are acted by a finite group $\Gammait$, then we say that an $l$-multimap $f: A \to {\mc S}^l(B)$ is $\Gammait$-equivariant if it is equivariant with respect to the $\Gammait$-action on $A$ and the induced $\Gammait$-action on ${\mc S}^l(B)$. 

\item If $A$ and $B$ are both topological spaces, then an $l$-multimap $f: A \to {\mc S}^l(B)$ is called {\it continuous} if it is continuous with respect to the topology on ${\mc S}^l(B)$ induced as a quotient of $B^l$. 

\item A continuous $l$-multimap $f: A \to {\mc S}^l(B)$ is {\it liftable} if there are continuous maps $f_1, \ldots, f_l: A \to B$ such that
\beqn
f(x) = [f^1(x), \ldots, f^l(x)] \in {\mc S}^l(B),\ \forall x \in A.
\eeqn
$f^1, \ldots, f^l$ are called {\it branches} of $f$. 

\item An $l_1$-multimap $f_1: A \to {\mc S}^{l_1} (B)$ and an $l_2$-multimap $f_2: A \to {\mc S}^{l_2}(B)$ are called {\it equivalent} if there exists a common multiple $l = a_1 l_1 = a_2 l_2$ of $l_1$ and $l_2$ such that
\beqn
m_{a_1} \circ f_1 = m_{a_2} \circ f_2
\eeqn
as $l$-multimaps from $A$ to $B$. 

\item Being equivalent is clearly reflexive, symmetric and transitive. A multimap from $A$ to $B$, denoted by $f: A \overset{m}{\to} B$, is an element of 
\beqn
\left( \bigsqcup_{l\geq 1} {\rm Map}( A, {\mc S}^l(B))  \right)/ \sim
\eeqn
where $\sim$ is the above equivalence relation. 
\end{enumerate}
\end{defn}

In the discussions in this paper, we often identify an $l$-multimap with its equivalence class as a multimap.

\begin{defn}\label{defn716} {\rm (Multisections)}
Let $M$ be a topological orbifold and $E \to M$ be a vector bundle.
\begin{enumerate}

\item  A {\it representative} of a (continuous) multisection of $E$ is a collection
\beqn
\Big\{ ( \tilde U_\alpha, {\mb R}^k, \Gammait_\alpha, \hat \varphi_\alpha, \varphi_\alpha; s_\alpha, l_\alpha )\ |\ \alpha \in I \Big\}
\eeqn
where $\{ ( \tilde U_\alpha, {\mb R}^k, \Gammait_\alpha, \hat \varphi_\alpha, \varphi_\alpha)\ |\ \alpha \in I \}$ is a bundle atlas for $E \to M$ and $s_\alpha: \tilde U_\alpha \to {\mc S}^{l_\alpha} ({\mb R}^k)$ is a $\Gammait_\alpha$-equivariant continuous $l_\alpha$-multimap, satisfying the following compatibility condition.
\begin{itemize}
\item For any $p\in \tilde U_\alpha$ and $q \in \tilde U_\beta$ with $\varphi_\alpha(p) = \varphi_\beta(q) \in M$, there exist subcharts $ \tilde U_p \subset \tilde U_\alpha$, $\tilde U_q \subset \tilde U_\beta$, an isomorphism $(\hat \varphi_{pq}, \varphi_{pq})$ of subcharts, a common multiple $l = a_\alpha l_\alpha = a_\beta l_\beta$ of $l_{\alpha}$ and $l_\beta$, such that
\beqn
\hat \varphi_{pq} \circ m_{a_\beta} \circ s_\beta |_{\tilde U_q} = m_{a_\alpha} \circ s_\alpha|_{\tilde U_p} \circ \varphi_{pq}.
\eeqn
\end{itemize}

\item Two representatives are equivalent if their union is also a representative. An equivalence class is called a multisection of $E$, denoted by 
\beqn
s: O \overset{m}{\to} E.
\eeqn

\item A multisection $s: M \overset{m}{\to} E$ is called {\it locally liftable} if for any $p \in M$, there exists a local representative $( \tilde U_p \times {\mb R}^k, \Gammait_p, \hat \varphi_p, \varphi_p; s_p, l_p)$ such that $s_p: \tilde U_p \to {\mc S}^{l_p}({\mb R}^k)$ which is a liftable continuous $l_p$-multimap.

\item A multisection $s: M \overset{m}{\to} E$ is called {\it transverse} if it is locally liftable and for any liftable local representative $s_p: \tilde U_p \to {\mc S}^{l}({\mb R}^k)$, all branches are transverse to the origin of ${\mb R}^k$.
\end{enumerate}
\end{defn}

The space of continuous multisections of $E \to M$, denoted by $C_m^0(M, E)$, is acted by the space of continuous functions $C^0(M)$ on $M$ by pointwise multiplication. $C_m^0(M, E)$ also has the structure of a commutative monoid, but not an abelian group. The additive structure is defined as follows. If $s_1, s_2: M \overset{m}{\to} E$ are multisections, then for liftable local representatives with branches $s_1^a: \tilde U \to {\mb R}^n$, $1 \leq a \leq l$, $s_2^b: \tilde U \to {\mb R}^n$, $1 \leq b \leq k$, define 
\beqn
(s_1 + s_2)^{ab} = [ s_1^a + s_2^b]_{1\leq a \leq l}^{1 \leq b \leq k}.
\eeqn
However there is no inverse to this addition: one can only invert the operation of adding a single valued section. It is enough, though, since we have the notion of being transverse to a single valued section which is not necessarily the zero section. 

We also want to measure the size of multisections. A continuous norm on an orbifold vector bundle $E \to M$ is a continuous function $\| \cdot \|: E \to [0, +\infty)$ which only vanishes on the zero section such that over each local chart, it lifts to an equivariant norm on the fibres. It is easy to construct norms in the relative sense, as one can extend continuous functions defined on closed sets.

The following lemma, which is a generalization of Theorem \ref{thm79}, shows one can achieve transversality for multisections by perturbation relative to a region where transversality already holds.

\begin{lemma}\label{lemma717}
Let $M$ be an orbifold and $E \to M$ be an orbifold vector bundle. Let $C \subset D \subset M$ be closed subsets. Let $S: M \to E$ be a single-valued continuous function and $t_C: M \overset{m}{\to} E$ be a multisection such that $S + t_C$ is transverse over a neighborhood of $C$. Then there exists a multisection $t_D: M \overset{m}{\to} E$ satisfying the following condition. 
\begin{enumerate}
\item $t_C = t_D$ over a neighborhood of $C$.

\item $S + t_D$ is transverse over a neighborhood of $D$. 
\end{enumerate}
Moreover, if $E$ has a continuous norm $\| \cdot \|$, then for any $\epsilon>0$, one can require that 
\beqn
\| t_D \|_{C^0} \leq \| t_C \|_{C^0} + \epsilon.
\eeqn
\end{lemma}

\begin{proof}
Similar to the proof of Theorem \ref{thm79}, one can choose a countable locally finite open cover $\{ U_i\}_{i=1}^\infty$ of $D$ satisfying the following conditions. 
\begin{enumerate}

\item Each $\ov{U_i}$ is compact.

\item There are a collection of precompact open subsets $V_i \sqsubset U_i$ such that $\{V_i\}_{i=1}^\infty$ is still an open cover of $D$.

\item Over each $U_i$ there is a local representative of $t_C$, written as 
\beqn
(\tilde U_i \times {\mb R}^k, \Gammait_i, \hat \varphi_i, \varphi_i; t_{i}, l_i)
\eeqn
where $t_i: \tilde U_i \to {\mc S}^{l_i}({\mb R}^k)$ is a $\Gammait_i$-equivariant $l_i$-multimap which is liftable. Write
\beqn
t_i (x) = [ t_i^1(x), \ldots, t_i^{l_i}(x)].
\eeqn
In this chart also write $S$ as a map $S_i: \tilde U_i \to {\mb R}^k$. 
\end{enumerate}

The transversality assumption implies that there is an open neighborhood $U_C \subset M$ of $C$ such that for each $i$, each $a\in \{1, \ldots, l_i\}$, $s_i^a$ is transverse to the origin over 
\beqn
\tilde U_{i, C}:= \varphi_i^{-1}(U_C) \subset \tilde U_C.
\eeqn
Using an inductive construction which is very similar to that in the proof of Theorem \ref{thm79}, one can construct a valid perturbation. We only sketch the construction for the first chart. Indeed, one can perturb each $t_1^a$ over $\tilde U_1$ to a function $\dot t_1^a: \tilde U_1 \to {\mb R}^k$, such that $S_1 + \dot t_1^a$ is transverse over a neighborhood of the closure of $\tilde V_1:= \varphi_1^{-1}(V_1)$ inside $\tilde U_1$, but $\dot t_1^a  = t_1^a$ over a neighborhood of $\tilde U_{1, C}$ and the near the boundary of $\tilde U_1$. Moreover, given $\epsilon>0$ we may require that 
\beq\label{eqn72}
\sup_{x \in \tilde U_1} \| \dot t_1^a(x) \| \leq \sup_{x \in \tilde U_1} \| t_1^a(x) \| + \frac{\epsilon}{2}.
\eeq
Then we obtain a continuous $l_1$-multimap
\beqn
\dot t_1(x) = [ \dot t_1^1(x), \ldots, \dot t_1^{l_1}(x)].
\eeqn
This multimap may not be $\Gammait_1$-transverse. We reset 
\beqn
\dot t_1 (x):= [ t_1^{ab}]_{1\leq a \leq l_1}^{1 \leq b \leq n_1}:= [ g_b^{-1} \dot t_1^a (g_b x)]_{1 \leq a \leq l_1}^{1 \leq b \leq n_1}
\eeqn
where $\Gammait_1 = \{ g_1, \ldots, g_{n_1}\}$. It is easy to verify that this is $\Gammait_1$-invariant, and agrees with $t_1$ over a neighborhood of $\tilde U_{1, C}$ and near the boundary of $\tilde U_1$. There still holds
\beqn
\sup_{a, b} \sup_{x \in \tilde U_1} \| \dot t_1^{ab}(x) \| \leq \sup_a \sup_{x \in \tilde U_1} \| t_1^a(x) \| + \frac{\epsilon}{2}.
\eeqn
Therefore, together with the original multisection over the complement of $U_1$, $\dot t_1$ defines a continuous multisection of $E$. Moreover, it agrees with the original one over a neighborhood of $C$ and is transverse near $C \cup \ov{V_1}$. In this way we can continue the induction to perturb over all $\tilde U_i$. At the $k$-th step of the induction, we replace $\frac{\epsilon}{2}$ by $\frac{\epsilon}{2^k}$ in the $C^0$ bound \eqref{eqn72}. Since $U_i$ is locally finite, near each point, the value of the perturbation stabilizes after finitely many steps of this induction. This results in a multisection $t_D$ which satisfies our requirement.
\end{proof}

\subsection{Virtual orbifold atlases}

Now we introduce the notion of virtual orbifold atlases. This notion plays a role as a general structure of moduli spaces we are interested in, and is a type of intermediate objects in concrete constructions. The eventual objects we would like to construct are {\it good coordinate systems}, which are special types of virtual orbifold atlases. 

\begin{defn}\label{defn718}
Let $X$ be a compact and Hausdorff space.
\begin{enumerate}

\item A {\it virtual orbifold chart} (chart for short) is a tuple 
\beqn
C:= (U, E, S, \psi, F)
\eeqn
where
\begin{enumerate}

\item $U$ is a topological orbifold (with boundary).

\item $E \to U$ is a continuous orbifold vector bundle.

\item $S: U \to E$ is a continuous section.

\item $F \subset X$ is an open subset. 

\item $\psi: S^{-1}(0) \to F$ is a homeomorphism.
\end{enumerate}
$F$ is call the {\it footprint} of this chart $C$, and the integer ${\rm dim} U - {\rm rank} E$ is called the {\it virtual dimension} of $C$.

\item Let $C = (U, E, S, \psi, F)$ be a chart and $U' \subset U$ be an open subset. The {\it restriction} of $C$ to $U'$ is the chart 
\beqn
C' = C|_{U'} = (U', E', S', \psi', F')
\eeqn
where $E' = E|_{U'}$, $S' = S|_{U'}$, $\psi' = \psi|_{(S')^{-1}(0)}$, and $F' = {\bf Image} \psi'$. Any such chart $C'$ induced from an open subset $U' \subset U$ is called a {\it shrinking} of $C$. A shrinking $C' = C|_{U'}$ is called a {\it precompact shrinking} if $U' \sqsubset U$, denoted by $C' \sqsubset C$.
\end{enumerate}
\end{defn}

A very useful lemma about shrinkings is the following, whose proof is left to the reader.

\begin{lemma}
Suppose $C = (U, E, S, \psi, F)$ is a virtual orbifold atlas and let $F' \subset F$ be an open subset. Then there exists a shrinking $C'$ of $C$ whose footprints is $F'$. Moreover, if $F' \sqsubset F$, then $C'$ can be chosen to be a precompact shrinking. 
\end{lemma}

\begin{defn}\label{defn720}
Let $C_i:= (U_i, E_i, S_i, \psi_i, F_i)$, $i=1,2$ be two charts of $X$. An {\it embedding} of $C_1$ into $C_2$ consists of a bundle embedding $\wh{\phi}_{21}$ satisfying the following conditions.
\begin{enumerate}

\item The following diagrams commute;
\begin{align*}
&\ \xymatrix{E_1 \ar[r]^{\wh\phi_{21}}   \ar[d]^{\pi_1} & E_2 \ar[d]_{\pi_2} \\
          U_1 \ar@/^1pc/[u]^{S_1} \ar[r]^{\phi_{21}} & U_2 \ar@/_1pc/[u]_{S_2}       }\ &\  \xymatrix{  S_1^{-1}(0) \ar[r]^{\phi_{21}} \ar[d]^{\psi_1} & S_2^{-1}(0) \ar[d]^{\psi_2}  \\     X \ar[r]^{{\rm Id}} & X }
\end{align*}

\item {\bf (Tangent Bundle Condition)} There exists an open neighborhood $N_{21} \subset U_2$ of $\phi_{21}(U_1)$ and a subbundle $E_{1;2}\subset E_2|_{N_{21}}$ which extends $\wh\phi_{21}(E_1)$ such that $S_2|_{N_2}: N_2 \to E_2|_{N_2}$ is transverse to $E_{1;2}$ and $S_2^{-1}(E_{1;2}) = \phi_{21}(U_1)$. 
\end{enumerate}
\end{defn}

The following lemma is left to the reader. 

\begin{lemma}
The composition of two embeddings is still an embedding.
\end{lemma}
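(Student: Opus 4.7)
Let $\bm\phi_{21}, \wh\pi_{21}$ be an embedding $K_1 \hookrightarrow K_2$ and $\bm\phi_{32}, \wh\pi_{32}$ be an embedding $K_2 \hookrightarrow K_3$. My plan is to assemble the four pieces of Definition \ref{defna11} for the would-be embedding $K_1 \hookrightarrow K_3$ and check each condition in turn.

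First, the underlying strong embedding of orbifold vector bundles $\bm\phi_{31} := \bm\phi_{32}\circ\bm\phi_{21}$ is provided by the preceding lemma on compositions of strong embeddings (applied separately to $E_1 \to E_3$ and to $U_1 \to U_3$, and then observed to be compatible in the evident way). The commutativity of the section/footprint diagrams follows immediately by stacking the two commutative diagrams that come from $\bm\phi_{21}$ and $\bm\phi_{32}$. The one genuinely new piece of data is the projection $\wh\pi_{31}$ together with the verification of the fibrewise local-homeomorphism property.

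The natural candidate is
\[
\wh\pi_{31} := \wh\phi_{32} \circ \wh\pi_{21} \circ (\wh\phi_{32})^{-1} \circ \wh\pi_{32} : E_3|_{\phi_{31}(U_1)} \longrightarrow \wh\phi_{31}(E_1),
\]
where $(\wh\phi_{32})^{-1}$ makes sense on the image $\wh\phi_{32}(E_2|_{\phi_{21}(U_1)})$, which is exactly where $\wh\pi_{32}$ lands over $\phi_{31}(U_1)=\phi_{32}(\phi_{21}(U_1))$. It is a straightforward check that $\wh\pi_{31}$ is a projection onto $\wh\phi_{31}(E_1)$ and that the complementary subbundle decomposes as
\[
E_{31} \;=\; E_{32}\big|_{\phi_{31}(U_1)} \;\oplus\; \wh\phi_{32}\bigl(E_{21}\bigr),
\]
corresponding to the two directions in which one moves off of $\phi_{31}(U_1)$ inside $U_3$. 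To produce a representative tubular neighborhood of $\phi_{31}(U_1)$ in $U_3$, I take representatives $(\nu_{32}\colon N_{32}\to\phi_{32}(U_2),\wh\nu_{32})$ and $(\nu_{21}\colon N_{21}\to\phi_{21}(U_1),\wh\nu_{21})$ of the two germs, shrink if necessary so that $\phi_{32}(N_{21})\subset N_{32}$, and define $N_{31}:=\nu_{32}^{-1}(\phi_{32}(N_{21}))$ with projection $\nu_{31}:=\phi_{32}\circ\nu_{21}\circ\phi_{32}^{-1}\circ\nu_{32}$, and analogously $\wh\nu_{31}$ on bundles. One verifies this is independent of representatives, so it defines a germ.

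The core step is the fibrewise local-homeomorphism property for the composition
\[
S_{31}\colon N_{31} \xrightarrow{\;S_3\;} E_3|_{N_{31}} \xrightarrow{\;\wh\nu_{31}\;} E_3|_{\phi_{31}(U_1)} \xrightarrow{\;\mathrm{Id}-\wh\pi_{31}\;} E_{31}.
\]
Using the direct sum decomposition of $E_{31}$ above, I plan to split $S_{31}$ into its two components. The first component, valued in $E_{32}|_{\phi_{31}(U_1)}$, is precisely the restriction of the corresponding map $S_{32}$ from the embedding $K_2\hookrightarrow K_3$ to the sub-tubular neighborhood, and is therefore a fibrewise local homeomorphism by hypothesis on $\bm\phi_{32}$. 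The second component, after transporting back through $\wh\phi_{32}$, reduces (by the defining identity $S_2\circ\phi_{32} = \wh\phi_{32}\circ S_2$ away from the zero locus, together with the description of $\nu_{31}$) to the map $S_{21}$ from the embedding $K_1\hookrightarrow K_2$, which is again a fibrewise local homeomorphism. The main, but mild, obstacle is bookkeeping: one must carefully track that the triangular structure of these two components (the second depends on the first only through a base-point change that preserves fibrewise local homeomorphy) implies that the full $S_{31}$ is a fibrewise local homeomorphism near the origin. Once this is set down, the embedding axioms are verified and the proof is complete.
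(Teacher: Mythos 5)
The paper disposes of this lemma with ``Left to the reader,'' so there is no proof in the text to compare against; your proposal is essentially the expected argument, and its overall structure is correct. In particular, defining $\wh\pi_{31}:=\wh\phi_{32}\circ\wh\pi_{21}\circ(\wh\phi_{32})^{-1}\circ\wh\pi_{32}$ does yield a projection onto $\wh\phi_{31}(E_1)$ whose kernel splits as $E_{31}=E_{32}|_{\phi_{31}(U_1)}\oplus\wh\phi_{32}(E_{21})$ (this is precisely equation \eqref{eqna2}), and decomposing $S_{31}$ along this splitting into an ``$S_{32}$-part'' and a ``transported $S_{21}$-part'' is the right mechanism.

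Two points deserve more care than your sketch gives them. First, the bundle-level projection $\wh\nu_{31}$ is not obtained purely ``analogously'' from $\wh\nu_{21}$ and $\wh\nu_{32}$: after applying $\wh\nu_{32}$, the fibre $E_3|_{\phi_{32}(N_{21})}$ splits as $\wh\phi_{32}(E_2|_{N_{21}})\oplus E_{32}|_{\phi_{32}(N_{21})}$, and while $\wh\nu_{21}$ transports the first summand to $\phi_{31}(U_1)$, there is no map in the given data that collapses $E_{32}$ along the disk-bundle $\phi_{32}(N_{21})\to\phi_{31}(U_1)$. You must add an auxiliary projection there (it exists after shrinking, since a vector bundle over a disk bundle is pulled back from the base up to isomorphism, and the germ is unaffected by the choice). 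As a consequence the first component of $S_{31}$ is this auxiliary projection composed with $S_{32}|_{N_{31}}$ rather than literally $S_{32}|_{N_{31}}$; this is harmless because a bundle projection is a fibrewise linear isomorphism, but it should be stated. Second, the closing ``triangular structure'' step is the actual content of the lemma: one should record that the first component is a fibrewise local homeomorphism in the $\nu_{32}$-fibre directions with base point any $y\in\phi_{32}(N_{21})$, that the second component evaluated at $y$ (where the first vanishes) recovers $\wh\phi_{32}\circ S_{21}$ in the transported $\nu_{21}$-fibre directions, and then invoke invariance of domain to conclude that a continuous map of this triangular form between manifolds of equal dimension is a local homeomorphism near the diagonal.
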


\begin{defn}\label{defn722}
Let $C_i = (U_i, E_i, S_i, \psi_i, F_i)$, $(i=1, 2)$ be two charts. A {\bf coordinate change} from $C_1$ to $C_2$ is a triple $T_{21} = (U_{21}, \phi_{21}, \wh\phi_{21})$, where $U_{21}\subset U_1$ is an open subset and $(\phi_{21}, \wh\phi_{21})$ is an embedding from $C_1|_{U_{21}}$ to $C_2$. They should satisfy the following conditions.
\begin{enumerate}

\item $\psi_1(U_{21} \cap S_1^{-1}(0)) = F_1 \cap F_2$.

\item If $x_k \in U_{21}$ converges to $x_\infty \in U_1$ and $y_k = \phi_{21}(x_k)$ converges to $y_\infty \in U_2$, then $x_\infty \in U_{21}$ and $y_\infty = \phi_{21}(y_\infty)$. 
\end{enumerate}
\end{defn}

\begin{lemma}\label{lemma723}
Let $C_i = (U_i, E_i, S_i, \psi_i, F_i)$, $(i=1, 2)$ be two charts and let $T_{21} = (U_{21}, \wh\phi_{21})$ be a coordinate change from $C_1$ to $C_2$. Suppose $C_i' = C_i|_{U_i'}$ be a shrinking of $C_i$. Then the restriction $T_{21}':= T_{21}|_{U_1' \cap \phi_{21}^{-1}(U_2')}$ is  a coordinate change from $C_1'$ to $C_2'$. 
\end{lemma}

\begin{proof}
Left to the reader.
\end{proof}

We call $T_{21}'$ in the above lemma the {\it induced} coordinate change from the shrinking.

Now we introduce the notion of atlases.

\begin{defn}\label{defn724}
Let $X$ be a compact metrizable space. A {\it virtual orbifold atlas} of virtual dimension $d$ on $X$ is a collection
\beqn
{\mf A}:= \Big( \Big\{ C_I:= (U_I, E_I, S_I, \psi_I, F_I)\ |\ I \in {\mc I} \Big\},\ \Big\{ T_{JI} = \big( U_{JI}, \phi_{JI}, \wh\phi_{JI}\big) \ |\ I \preq J \Big\}\Big),
\eeqn
where
\begin{enumerate}

\item $({\mc I}, \preq)$ is a finite, partially ordered set.

\item For each $I\in {\mc I}$, $C_I$ is a virtual orbifold chart of virtual dimension $d$ on $X$.

\item For $I \preq J$, $T_{JI}$ is a coordinate change from $C_I$ to $C_J$.
\end{enumerate}
They are subject to the following conditions.
\begin{itemize}
\item {\bf (Covering Condition)} $X$ is covered by all the footprints $F_I$. 

\item {\bf (Cocycle Condition)} For $I \preq J \preq K \in {\mc I}$, denote $U_{KJI} = U_{KI} \cap \phi_{JI}^{-1} (U_{KJ}) \subset U_I$. Then we require that 
\beqn
\wh\phi_{KI}|_{U_{KJI}} = \wh\phi_{KJ} \circ \wh \phi_{JI}|_{U_{KJI}}
\eeqn
as bundle embeddings.

\item {\bf (Overlapping Condition)} For $I, J \in {\mc I}$, we have
\beqn
\ov{F_I} \cap \ov{F_J} \neq \emptyset \Longrightarrow I \preq J\ {\rm or}\ J \preq I.
\eeqn
\end{itemize}
\end{defn}

All virtual orbifold atlases considered in this paper have definite virtual dimensions, although sometimes we do not explicitly mention it.

\subsubsection{Orientations}

Now we discuss orientation. When $M$ is a topological manifold, there is an orientation bundle ${\mc O}_M \to M$ which is a double cover of $M$ (or a ${\mb Z}_2$-principal bundle). $M$ is orientable if and only if ${\mc O}_M$ is trivial. If $E \to M$ is a continuous vector bundle, then $E$ also has an orientation bundle ${\mc O}_E \to M$ as a double cover. Since ${\mb Z}_2$-principal bundles over a base $B$ are classified by $H^1(B; {\mb Z}_2)$, the orientation bundles can be multiplied. We use $\otimes$ to denote this multiplication.

\begin{defn} {\rm (Orientability)}
\begin{enumerate}
\item A virtual orbifold chart $C = (U, E, S, \psi, F)$ is {\it locally orientable} if for any bundle chart $(\tilde U, {\mb R}^n, \Gammait, \wh \varphi, \varphi)$ of $E$, if we denote $\tilde E = \tilde U \times {\mb R}^n$, then for any $\gamma \in \Gammait$, the map
\beqn
\gamma: {\mc O}_{\tilde U} \otimes {\mc O}_{\tilde E^*} \to {\mc O}_{\tilde U} \otimes {\mc O}_{\tilde E^*}
\eeqn
is the identity over all fixed points of $\gamma$. 

\item If $C$ is locally orientable, then ${\mc O}_{\tilde U} \otimes {\mc O}_{\tilde E^*}$ for all local charts glue together a double cover ${\mc O}_C \to U$. If ${\mc O}_C$ is trivial (resp. trivialized), then we say that $C$ is {\it orientable} (resp. {\it oriented}).

\item A coordinate change $T_{21}= (U_{21}, \phi_{21}, \wh\phi_{21})$ between two oriented charts $C_1 = (U_1, E_1, S_1, \psi_1, F_1)$ and $C_2 = (U_2, E_2, S_2, \psi_2, F_2)$ is called {\it oriented} if the embeddings $\phi_{21}$ and $\wh \phi_{21}$ are compatible with the orientations on ${\mc O}_{C_1}$ and ${\mc O}_{C_2}$.

\item An atlas ${\mf A}$ is oriented if all charts are oriented and all coordinate changes are oriented.
\end{enumerate}
\end{defn}

\subsection{Good coordinate systems}

Now we introduce the notion of shrinkings of virtual orbifold atlases. 

\begin{defn}\label{defn726}
Let ${\mf A} = ( \{ C_I | I \in {\mc I} \}, \{ T_{JI} | I \preq J \})$ be a virtual orbifold atlas on $X$.
\begin{enumerate}

\item  A {\it shrinking} of ${\mf A}$ is another virtual orbifold atlas ${\mf A}' = ( \{ C_I' | I \in {\mc I} \}, \{ T_{JI}' | I \preq J \})$ indexed by elements of the same partially ordered set ${\mc I}$ such that for each $I \in {\mc I}$, $C_I'$ is a shrinking $C_I|_{U_I'}$ of $C_I$ and for each $I \preq J$, $T_{JI}'$ is the induced shrinking of $T_{JI}$ given by Lemma \ref{lemma723}.

\item If for every $I \in {\mc I}$, $U_I'$ is a precompact subset of $U_I$, then we say that ${\mf A}'$ is a {\it precompact shrinking} of ${\mf A}$ and denote ${\mf A}' \sqsubset {\mf A}$. 

\end{enumerate}
\end{defn}

Given a virtual orbifold atlas ${\mf A} = ( \{ C_I | I \in {\mc I} \}, \{ T_{JI} | I \preq J \} )$, 
we define a relation $\curlyvee$ on the disjoint union $\bigsqcup_{I \in {\mc I}} U_I$ as follows. $U_I \ni x \curlyvee y\in U_J$ if one of the following holds.
\begin{enumerate}
\item $I = J$ and $x = y$;

\item $I \preq J$, $x \in U_{JI}$ and $y = \phi_{JI}(x)$;

\item $J \preq I$, $y \in U_{IJ}$ and $x = \phi_{IJ}(y)$.
\end{enumerate}
If ${\mf A}'$ is a shrinking of ${\mf A}$, then it is easy to see that the relation $\curlyvee'$ on $\bigsqcup_{I \in {\mc I}} U_I'$ defined as above is induced from the relation $\curlyvee$ for ${\mf A}$ via restriction. 

For an atlas ${\mf A}$, if $\curlyvee$ is an equivalence relation, we can form the quotient space
\beqn
|{\mf A}|:= \Big( \bigsqcup_{I \in {\mc I}} U_I \Big)/ \curlyvee.
\eeqn
with the quotient topology. There is a natural injective map 
\beqn
X \hookrightarrow |{\mf A}|.
\eeqn
We call $|{\mf A}|$ the {\it virtual neighborhood} of $X$ associated to the atlas ${\mf A}$. Denote the quotient map by 
\beq\label{eqn73}
\pi_{\mf A}: \bigsqcup_{I \in {\mc I}} U_I \to |{\mf A}|
\eeq
which induces continuous injections $U_I \hookrightarrow |{\mf A}|$. A point in $|{\mf A}|$ is denoted by $|x|$, which has certain representative $x \in U_I$ for some $I$. 

\begin{defn}\label{defn727}
A virtual orbifold atlas ${\mf A}$ on $X$ is called a {\it good coordinate system} if the following conditions are satisfied.
\begin{enumerate}

\item $\curlyvee$ is an equivalence relation.

\item The virtual neighborhood $|{\mf A}|$ is a Hausdorff space.

\item For all $I\in {\mc I}$, the natural maps $U_I \to |{\mf A}|$ are homeomorphisms onto their images.
\end{enumerate}
\end{defn}

The conditions for good coordinate systems are very useful for later constructions (this is the same as in the Kuranishi approach, see \cite{FOOO_2016}), for example, the construction of suitable multisection perturbations. In these constructions, the above conditions are often implicitly used without explicit reference. Therefore, an important step is to construct good coordinate systems.

\begin{thm}\label{thm728} {\rm (Constructing good coordinate system)}
Let ${\mf A}$ be a virtual orbifold atlas on $X$ with the collection of footprints $F_I$ indexed by $I \in {\mc I}$. Let $F_I^\square \sqsubset F_I$ for all $I\in {\mc I}$ be a collection of precompact open subsets such that 
\beqn
X = \bigcup_{I \in {\mc I}} F_I^\square.
\eeqn
Then there exists a shrinking ${\mf A}' $ of ${\mf A}$ such that the collection of shrunk footprints $F_I'$ contains $\ov{F_I^\square}$ for all $I \in {\mc I}$ and ${\mf A}'$ is a good coordinate system. 

Moreover, if ${\mf A}$ is already a good coordinate system, then any shrinking of ${\mf A}$ remains a good coordinate system. 
\end{thm}

We give a proof of Theorem \ref{thm728} in the next subsection. A similar result is used in the Kuranishi approach while our argument potentially differs from that of \cite{FOOO_2016}.

\begin{rem}\label{rem729}
If ${\mf A}$ is a good coordinate system, and ${\mf A}'$ is a shrinking of ${\mf A}$, then the shrinking induces a natural map 
\beqn
|{\mf A}'| \hookrightarrow |{\mf A}|.
\eeqn
If we equip both $|{\mf A}'|$ and $|{\mf A}|$ with the quotient topologies, then the natural map is continuous. However there is another topology on $|{\mf A}'|$ by viewing it as a subset of $|{\mf A}|$. We denote this topology by $\|{\mf A}'\|$ and call it the {\it subspace topology}. In most cases, the quotient topology is strictly stronger than the subspace topology. Hence it is necessary to distinguish the two different topologies. 
\end{rem}

\subsection{Shrinking good coordinate systems}

In this subsection we prove Theorem \ref{thm728}. First we show that by precompact shrinkings one can make the relation $\curlyvee$ an equivalence relation. 

\begin{lemma}\label{lemma730}
Let ${\mf A}$ be a virtual orbifold atlas on $X$ with the collection of footprints $\{F_I\ |\ I \in {\mc I}\}$. Let $F_I^\square \sqsubset F_I$ be precompact open subsets such that 
\beqn
X = \bigcup_{I \in {\mc I}} F_I^\square.
\eeqn
Then there exists a precompact shrinking ${\mf A}'$  of ${\mf A}$ whose collection of footprints $F_I'$ contains $\ov{F_I^\square}$ for all $I \in {\mc I}$ such that the relation $\curlyvee$ on ${\mf A}'$ is an equivalence relation.
\end{lemma}

\begin{proof}
By definition, the relation $\curlyvee$ is reflexive and symmetric. By the comments above, any shrinking of ${\mf A}$ will preserve reflexiveness and symmetry. Hence we only need to shrink the atlas to make the induced relation transitive. 

For any subset ${\mc I}' \in {\mc I}$, we say that $\curlyvee$ is transitive in ${\mc I}'$ if for $x, y, z \in \bigsqcup_{I \in {\mc I}'} U_I$, $x\curlyvee y$, $y \curlyvee z$ imply $x \curlyvee z$. Being transitive in any subset ${\mc I}'$ is a condition that is preserved under shrinking. Hence it suffices to construct shrinkings such that $\curlyvee$ is transitive in $\{I, J, K\}$ for any three distinct elements $I, J, K \in {\mc I}$. Let $x\in U_I$, $y \in U_J$, $z \in U_K$ be general elements.

Since $U_I$ (resp. $U_J$ resp. $U_K$) is an orbifold and hence metrizable, we can choose a sequence of precompact open subsets $U_I^n \sqsubset U_I$ (resp. $U_J^n \sqsubset U_J$ resp. $U_K^n \sqsubset U_K$) containing $\ov{F_I^\square}$ (resp. $\ov{F_J^\square}$ resp. $\ov{F_K^\square}$) such that 
\beqn
U_I^{n+1} \sqsubset U_I^n,\ \ \ U_J^{n+1} \sqsubset U_J^n,\ \ \ U_K^{n+1} \sqsubset U_K^n,
\eeqn
and
\beqn
\bigcap_{n} U_I^n = \psi_I^{-1}(\ov{F_I^\square}),\ \ \ \bigcap_n U_J^n = \psi_J^{-1}(\ov{F_J^\square}),\ \ \ \bigcap_{n} U_K^n = \psi_K^{-1}(\ov{F_K^\square}).
\eeqn
Then for each $n$, $U_I^n, U_J^n, U_K^n$ induce a shrinking of the atlas ${\mf A}$, denoted by ${\mf A}^n$. Let the induced binary relation on $U_I^n \sqcup U_J^n \sqcup U_K^n$ still by $\curlyvee$. We claim that for $n$ large enough, $\curlyvee$ is an equivalence relation on this triple disjoint union. Denote the domains of the shrunk coordinate changes by $U_{JI}^n$, $U_{KJ}^n$ and $U_{KI}^n$ respectively.

If this is not true, then without loss of generality, we may assume that for all large $n$, there exist points $x^n \in U_I^n$, $y^n \in  U_J^n$, $z^n \in U_K^n$ such that  
\beq\label{eqn74}
x^n \curlyvee y^n,\ y^n \curlyvee z^n,\ {\rm but}\  (x^n, z^n) \notin \curlyvee;
\eeq
Then for some subsequence (still indexed by $n$), $x^n$, $y^n$ and $z^n$ converge to $x^\infty \in \psi_I^{-1}(\ov{F_I^\square}) \subset U_I$, $y^\infty \in \psi_J^{-1}( \ov{F_J^\square}) \subset U_J$ and $z^\infty \in \psi_K^{-1}(\ov{F_K^\square}) \subset U_K$ respectively. Then by the definition of coordinate changes (Definition \ref{defn722}), one has
\beqn
x^\infty \curlyvee y^\infty,\ y^\infty \curlyvee z^\infty \Longrightarrow \psi_I(x^\infty) = \psi_J(y^\infty) = \psi_K(z^\infty) \in \ov{F_I^\square} \cap \ov{F_J^\square} \cap \ov{F_K^\square}.
\eeqn
By the {\bf (Overlapping Condition)} of Definition \ref{defn724}, $\{I, J, K\}$ is totally ordered. Since the roles of $K$ and $I$ are symmetric, we may assume that $I \preq K$. Then since
\beqn
x^{\infty} \in \psi_I^{-1} \big( \ov{F_I^\square} \cap \ov{F_K^\square} \big) \subset \psi_I^{-1}( F_I \cap F_K) = \psi_I^{-1} ( F_{KI}) \subset U_{KI}
\eeqn
and $U_{KI} \subset U_I$ is an open set, for $n$ large enough one has
\beqn
x^n \in U_{KI}. 
\eeqn

\begin{enumerate}
\item If $I \preq J \preq K$, then by {\bf (Cocycle Condition)} of Definition \ref{defn724},
\beqn
\phi_{KI}(x^n) =\phi_{KJ} (\phi_{JI}(x^n)) = \phi_{KJ}( y^n) = z^n.
\eeqn
So $x^n \curlyvee z^n$, which contradicts \eqref{eqn74}.

\item If $J \preq I \preq K$, then {\bf (Cocycle Condition)} of Definition \ref{defn724},
\beqn
z^n = \phi_{KJ} (y^n) = \phi_{KI} \big( \phi_{IJ}(y^n) \big) = \phi_{KI}(x^n).
\eeqn
So $x^n \curlyvee z^n$, which contradicts \eqref{eqn74}.

\item If $I \preq K \preq J$, then since $\phi_{KI}(x^{\infty}) \in \psi_K^{-1}( F_J \cap F_K) \subset U_{JK}$, for large $n$, $x^n\in \phi_{KI}^{-1}(U_{JK})$. Then by {\bf (Cocycle Condition)} of Definition \ref{defn724},
\beqn
\phi_{JK}(z^n) = y^n = \phi_{JI}(x^n) = \phi_{JK} \big( \phi_{KI}(x^n) \big).
\eeqn
Since $\phi_{JK}$ is an embedding, we have $\phi_{KI}(x^n) = z^n$. Therefore, $x^n \curlyvee z^n$, which contradicts \eqref{eqn74}.
\end{enumerate}
Therefore, $\curlyvee^n$ is an equivalence relation on $U_I^n \sqcup U_J^n \sqcup U_K^n$ for large enough $n$. We can perform the shrinking for any triple of elements of ${\mc I}$, which eventually makes $\curlyvee$ an equivalence relation. By the construction the shrunk footprints $F_I'$ still contain $\ov{F_I^\square}$. 
\end{proof}

\begin{lemma}\label{lemma731}
Suppose ${\mf A}$ is a virtual orbifold atlas on $X$ such that the relation $\curlyvee$ is an equivalence relation. Suppose there is a collection of precompact subsets $F_I^\square \sqsubset F_I$ of footprints of ${\mf A}$ such that 
\beqn
X = \bigcup_{I \in {\mc I}} F_I^\square.
\eeqn
Then there exists a precompact shrinking ${\mf A}' \sqsubset {\mf A}$ satisfying 
\begin{enumerate}
\item The shrunk footprints $F_I'$ all contain $F_I^\square$.

\item The virtual neighborhood $|{\mf A}'|$ is a Hausdorff space.
\end{enumerate}
\end{lemma}

Before proving Lemma \ref{lemma731}, we need some preparations. Order the finite set ${\mc I}$ as $\{ I_1, \ldots, I_m\}$ such that for $k = 1, \ldots, m$, 
\beqn
I_k \preq J \Longrightarrow J \in \{I_k, I_{k+1}, \ldots, I_m\}.
\eeqn
For each $k$, $\curlyvee$ induces an equivalence relation on $\bigsqcup_{i \geq k} U_{I_i}$ and denote the quotient space by $|{\mf A}_k|$. Then the map $\pi_{\mf A}$ of \eqref{eqn73} induces a natural continuous map 
\beqn
\pi_k: \bigsqcup_{i \geq k} U_{I_i} \to |{\mf A}_k|.
\eeqn

\begin{lemma}\label{lemma732}
For $k = 1, \ldots, m$, if $|{\mf A}_k|$ is Hausdorff and ${\mf A}'$ is a shrinking of ${\mf A}$, then $|{\mf A}_k'|$ is also Hausdorff.
\end{lemma}

\begin{proof}
Left to the reader. A general fact is that the quotient topology is always stronger than (or homeomorphic to) the subspace topology (see Remark \ref{rem729}).
\end{proof}

\begin{lemma}\label{lemma733}
The natural map 
\beq\label{eqn75}
|{\mf A}_{k+1}| \to |{\mf A}_k|
\eeq
is a homeomorphism onto an open subset.
\end{lemma}

\begin{proof}
The map is clearly continuous and injective. To show that it is a homeomorphism onto an open set, consider any open subset $O_{k+1}$ of its domain. Its preimage under the quotient map 
\beqn
U_{I_{k+1}} \sqcup \cdots \sqcup U_{I_m} \to |{\mf A}_{k+1}|
\eeqn
is denoted by 
\beqn
\tilde O_{k+1} = O_{I_{k+1}} \sqcup \cdots \sqcup O_{I_m},
\eeqn
where $O_{I_{k+1}}, \ldots, O_{I_m}$ are open subsets of $U_{I_{k+1}}, \ldots, U_{I_m}$ respectively. Define 
\beqn
O_{I_k}:= \bigcup_{i \geq k+1,\ I_i \geq I_k} \phi_{I_i I_k}^{-1}(O_{I_i}).
\eeqn
This is an open subset of $U_{I_k}$. Then the image of $O_{k+1}$ under the map \eqref{eqn75}, denoted by $O_k$, is the quotient of 
\beqn
\tilde O_k:= O_{I_k} \sqcup O_{I_{k+1}} \sqcup \cdots \sqcup O_{I_m} \subset U_{I_k} \sqcup \cdots \sqcup U_{I_m}
\eeqn
On the other hand, $\tilde O_k$ is exactly the preimage of $O_k$ under the quotient map. Hence by the definition of the quotient topology, $O_k$ is open. This show that \eqref{eqn75} is a homeomorphism onto an open subset. 
\end{proof}

\begin{proof}[Proof of Lemma \ref{lemma731}]
For each $k$, we would like to construct shrinkings $U_{I_i}'\sqsubset U_{I_i}$ for all $i \geq k$ such that $|{\mf A}_k'|$ is Hausdorff and the shrunk footprints $F_{I_i}'$ contains $\ov{F_{I_i}^\square}$ for all $i \geq k$. Our construction is based on a top-down induction. First, for $k = m$, $|{\mf A}_m| \simeq U_{I_m}$ and hence is Hausdorff. Suppose after shrinking $|{\mf A}_{k+1}|$ is already Hausdorff. 

Choose open subsets $F_{I_i}' \sqsubset F_{I_i}$ for all $i \geq k$ such that 
\begin{align*}
&\ \ov{F_{I_i}^\square} \subset F_{I_i}',\ &\ X =  \bigcup_{i \geq k} F_{I_i}' \cup \bigcup_{i \leq k-1} F_{I_i}.
\end{align*}
Choose precompact open subsets $U_{I_i}' \sqsubset U_{I_i}$ for all $i \geq k$ such that 
\begin{align*}
&\ \psi_{I_i}( U_{I_i}' \cap S_{I_i}^{-1}(0)) = F_{I_i}',\ &\ \psi_{I_i}( \ov{U_{I_i}'} \cap S_{I_i}^{-1}(0)) = \ov{F_{I_i}'}.
\end{align*}
Then $U_{I_i}'$ for $i \geq k$ and $U_{I_i}$ for $i<k$ provide a shrinking ${\mf A}'$ of ${\mf A}$. We claim that $|{\mf A}_k'|$ is Hausdorff. 

Indeed, pick any two different points $|x|, |y| \in |{\mf A}_k'|$. If $|x|, |y| \in |{\mf A}_{k+1}'| \subset |{\mf A}_k'|$, then by the induction hypothesis and Lemma \ref{lemma732}, $|x|$ and $|y|$ can be separated by two open subsets in $|{\mf A}_{k+1}'|$. Then by Lemma \ref{lemma733}, these two open sets are also open sets in $|{\mf A}_k'|$. Hence we assume that one or both of $|x|$ and $|y|$ are in $|{\mf A}_k'| \setminus |{\mf A}_{k+1}'|$. 

\vspace{0.2cm}

\noindent {\bf Case 1.} Suppose $|x|$ and $|y|$ are represented by $x, y \in U_{I_k}'$. Choose a distance function on $U_{I_k}$ which induces the same topology. Let $O_x^\epsilon$ and $O_y^\epsilon$ be the open $\epsilon$-balls in $U_{I_k}'$ centered at $x$ and $y$ respectively. Then for $\epsilon$ small enough, $\ov{O_x^\epsilon} \cap \ov{O_y^\epsilon} = \emptyset$. 

\vspace{0.2cm}

\noindent {\it Claim.} For $\epsilon$ sufficiently small, for all $I_k \preq I_i$ and $I_k \preq I_j$, one has
\beqn
\pi_{k+1} \big( \ov{\phi_{I_i I_k}(O_x^\epsilon \cap U_{I_i I_k}' )} \big) \cap \pi_{k+1} \big( \ov{\phi_{I_j I_k}(O_y^\epsilon \cap U_{I_j I_k}' )} \big) = \emptyset.
\eeqn
Here the closures are the closures in $U_{I_i}$ and $U_{I_j}$ respectively. 

\vspace{0.2cm}

\noindent {\it Proof of the claim.} Suppose it is not the case, then there exist a sequence $\epsilon_n \to 0$, $I_k \preq I_i$, $I_k \preq I_j$, and a sequence of points 
\beqn
|z^n| \in \pi_{k+1} \big( \ov{\phi_{I_i I_k} (O_x^{\epsilon_n}  \cap U_{I_i I_k}' )} \big) \cap \pi_{k+1} \big( \ov{\phi_{I_j I_k}(O_y^\epsilon \cap U_{I_j I_k}' )} \big) \subset |{\mf A}_{k+1}|.
\eeqn
Then $|z^n|$ has its representative $p^n \in \ov{\phi_{I_i I_k} (O_x^{\epsilon_n}  \cap U_{I_k I_i}' )} \subset U_{I_i}$ and its representative $q^n \in \ov{\phi_{I_j I_k} (O_y^{\epsilon_n}  \cap U_{I_j I_k}' )} \subset U_{I_j}$. Then $p^n \curlyvee q^n$ and without loss of generality, assume that $I_i \preq I_j$. Then $p^n \in U_{I_j I_i}$ and $q^n = \phi_{I_j I_i}(p^n)$. 

Choose distance functions $d_i$ on $U_{I_i}$ and $d_j$ on $U_{I_j}$ which induce the same topologies. Then one can choose $x^n \in O_x^{\epsilon_n} \cap U_{I_i I_k}' $ and $y^n \in O_y^{\epsilon_n} \cap U_{I_j I_k}'$ such that 
\begin{align}\label{eqn76}
&\ d_i ( p^n, \phi_{I_i I_k}(x^n) ) \leq \epsilon_n,\ &\ d( q^n, \phi_{I_i I_k}(y^n)) \leq \epsilon_n.
\end{align}
Since $\ov{U_{I_i}'}$ and $\ov{U_{I_j}'}$ are compact and $p^n \in \ov{U_{I_i}'}$, $q^n \in \ov{U_{I_j}'}$, for some subsequence (still indexed by $n$), $p^n$ converges to some $p^\infty \in \ov{U_{I_i}'}$ and $q^n$ converges to some $q^\infty \in \ov{U_{I_j}'}$. Then $p^\infty \curlyvee q^\infty$. Moreover, by \eqref{eqn76}, one has 
\begin{align*}
&\ \lim_{n \to\infty} \phi_{I_i I_k}(x^n) = p^\infty,\ &\ \lim_{n \to \infty} \phi_{I_j I_k}(y^n) = q^\infty.
\end{align*}
On the other hand, $x^n$ converges to $x$ and $y^n$ converges to $y$. By the property of coordinate changes, one has that $x \in U_{I_i I_k}$, $y \in U_{I_j I_k}$ and 
\beqn
x \curlyvee p^\infty \curlyvee q^\infty \curlyvee  y. 
\eeqn
Since $\curlyvee$ is an equivalence relation and it remains an equivalence relation after shrinking, $x \curlyvee y$, which contradicts $x \neq y$.  \hfill {\it End of the proof of the claim.}

Now choose such an $\epsilon$ and abbreviate $O_x = O_x^\epsilon$, $O_y = O_y^\epsilon$. Denote 
\begin{align*}
&\ P_{I_i} = \ov{\phi_{I_i I_k}(O_x \cap U_{I_i I_k}' )} \subset \ov{U_{I_i}'},\ &\ Q_{I_i} =  \ov{\phi_{I_i I_k}(O_y \cap U_{I_i I_k}' )} \subset \ov{U_{I_i}'}
\end{align*}
(which could be empty). They are all compact, hence  
\begin{align*}
&\ P_{k+1}:= \pi_{k+1}( \bigsqcup_{i \geq k+1} P_{I_i} )  \subset  |{\mf A}_{k+1}|,\ &\  Q_{k+1}:= \pi_{k+1}( \bigsqcup_{i \geq k+1} Q_{I_i})  \subset |{\mf A}_{k+1}|
\end{align*}
are both compact. The above claim implies that $P_{k+1} \cap Q_{k+1} = \emptyset$. Then by the induction hypothesis which says that $|{\mf A}_{k+1}|$ is Hausdorff, they can be separated by open sets $V_{k+1}, W_{k+1} \subset |{\mf A}_{k+1}|$. Write 
\begin{align*}
&\ \pi_{k+1}^{-1} \big( V_{k+1}\big) = \bigsqcup_{i \geq k+1} V_{I_i},\ &\ \pi_{k+1}^{-1} \big( W_{k+1} \big) = \bigsqcup_{i \geq k+1} W_{I_i}.
\end{align*}
Define $V_{I_i}' = V_{I_i} \cap U_{I_i}'$, $W_{I_i}' = W_{I_i} \cap U_{I_i}'$ and 
\begin{align*}
&\ V_{I_k}':= O_x \cup \bigcup_{I_k \preq I_i} \phi_{I_i I_k}^{-1}(V_{I_i}') \cap U_{I_k}',\ &\ W_{I_k}':= O_y \cup \bigcup_{I_k \preq I_i} \phi_{I_i I_k}^{-1}(W_{I_i}') \cap U_{I_k}'
\end{align*}
and 
\begin{align*}
&\ V_k':= \pi_k \big( \bigsqcup_{i \geq k} V_{I_i}' \big)\subset |{\mf A}_{k}'|,\ &\ W_k':= \pi_k \big( \bigsqcup_{i \geq k} W_{I_i}' \big) \subset |{\mf A}_{k}'|.
\end{align*}
It is easy to check that $V_k'$ and $W_k'$ are disjoint open subsets containing $|x|$ and $|y|$ respectively. Therefore $|x|$ and $|y|$ are separated in $|{\mf A}_k'|$.

\vspace{0.2cm}

\noindent {\bf Case 2.} Now suppose $|x|$ is represented by $x \in U_{I_k}'$ and $|y|\in |{\mf A}_{k+1}'|$. Similar to {\bf Case 1}, we claim that for $\epsilon$ sufficiently small, for all $I_k \preq I_i$, one has 
\beqn
|y| \notin \pi_{k+1} \big( \ov{\phi_{I_i I_k}(O_x^\epsilon) \cap U_{I_i I_k}'} \big)=: P_{I_i} \subset \ov{U_{I_i}'}.
\eeqn
The proof is similar and is omitted. Then choose such an $\epsilon$ and abbreviate $O_x = O_x^\epsilon$. $P_{I_i}$ are compact sets and so is  
\beqn
P_{k+1}:= \pi_{k+1} \big( \bigsqcup_{i \geq k+1} P_{I_i} \big) \subset |{\mf A}_{k+1}|.
\eeqn
The above claim implies that $|y| \notin P_{k+1}$. Then by the induction hypothesis, $|y|$ and $P_{k+1}$ can be separated by open sets $V_{k+1}$ and $W_{k+1}$ of $|{\mf A}_{k+1}|$. By similar procedure as in {\bf Case 1} above, one can produce two open subsets of $|{\mf A}_k'|$ which separate $|x|$ and $|y|$.  

Therefore, we can finish the induction and construct a shrinking such that $|{\mf A}'|$ is Hausdorff. Eventually the shrunk footprints still contain $\ov{F_I^\square}$. 
\end{proof}

Now we can finish proving Theorem \ref{thm728}. Suppose $|{\mf A}|$ is Hausdorff. By the definition of the quotient topology, the natural map $U_I \hookrightarrow |{\mf A}|$ is continuous. Since $U_I$ is locally compact and $|{\mf A}|$ is Hausdorff, a further shrinking can make this map a homeomorphism onto its image. This uses the fact that a continuous bijection from a compact space to a Hausdorff space is necessarily a homeomorphism. Hence the third condition for a good coordinate system is satisfied by a precompact shrinking of ${\mf A}$, and this condition is preserved for any further shrinking. This establishes Theorem \ref{thm728}.

\subsection{Perturbations}\label{subsection76}

Now we define the notion of perturbations. 

\begin{defn}\label{defn734}
Let ${\mf A}$ be a good coordinate system on $X$. 

\begin{enumerate}

\item A {\it multi-valued perturbation} of ${\mf A}$, simply called a {\it perturbation}, denoted by ${\mf t}$, consists of a collection of multi-valued continuous sections 
\beqn
t_I: U_I \overset{m}{\to} E_I
\eeqn
satisfying (as multisections)
\beqn
t_J \circ \phi_{JI} = \wh \phi_{JI} \circ t_I|_{U_{JI}}.
\eeqn

\item Given a multi-valued perturbation ${\mf t}$, the object
\beqn
\tilde {\mf s} = \Big( \tilde s_I = S_I + t_I: U_I \overset{m}{\to} E_I \Big)
\eeqn
satisfies the same compatibility condition with respect to coordinate changes. The perturbation ${\mf t}$ is called {\it transverse} if every $\tilde s_I$ is a transverse multisection. 

\item Suppose ${\mf A}$ is thickened by ${\mc N} = \{(N_{JI}, E_{I;J})\ |\ I \preq J\}$. We say that ${\mf t}$ is {\it ${\mc N}$-normal} if for all $I \preq J$, one has
\beq\label{eqn77}
t_J(N_{JI}) \subset E_{I; J}|_{N_{JI}}. 
\eeq

\item The zero locus of a perturbed $\tilde {\mf s}$ gives objects in various different categories. Denote
\beqn
{\mc Z} = \bigsqcup_{I \in {\mc I}}  \tilde s_I^{-1}(0).
\eeqn
It is naturally equipped with the topology induced from the disjoint union of $U_I$. Denote by
\beqn
|{\mc Z}|:= {\mc Z}/ \curlyvee. 
\eeqn
the quotient of ${\mc Z}$, which is equipped with the quotient topology. Furthermore, there is a natural injection $|{\mc Z}|  \hookrightarrow |{\mf A}|$. Denote by $\| {\mc Z}\|$ the same set as $|{\mc Z}|$ but equipped with the topology as a subspace of $|{\mf A}|$. 
\end{enumerate}
\end{defn}

In order to construct suitable perturbations of a good coordinate system, we need certain tubular neighborhood structures with respect to coordinate changes. In our topological situation, it is sufficient to have some weaker structure near the embedding images.

\begin{defn}\label{defn735}
Let ${\mf A}$ be a good coordinate system with charts indexed by elements in a finite partially ordered set $({\mc I}, \preq)$ and coordinate changes indexed by pairs $I \preq J \in {\mc I}$. A {\it thickening} of ${\mf A}$ is a collection of objects 
\beqn
\big\{ (N_{JI}, E_{I; J})\ |\ I \preq J \big\}
\eeqn
where $N_{JI} \subset U_J$ is an open neighborhood of $\phi_{JI}(U_{JI})$ and $E_{I; J}$ is a subbundle of $E_J|_{N_{JI}}$. They are required to satisfy the following conditions. 
\begin{enumerate}

\item If $I \preq K$, $J \preq K$ but there is no partial order relation between $I$ and $J$, then 
\beq\label{eqn78}
N_{KI} \cap N_{KJ} = \emptyset.
\eeq

\item For all triples $I \preq J \preq K$, 
\beqn
E_{I;J}|_{\phi_{KJ}^{-1}(N_{KI}) \cap N_{JI} } = \wh\phi_{KJ}^{-1}(E_{I;K})|_{\phi_{KJ}^{-1}(N_{KI}) \cap N_{JI}}.
\eeqn

\item For all triples $I \preq J \preq K$, one has
\beqn
E_{I; K}|_{N_{KI} \cap N_{KJ}} \subset E_{J; K}|_{N_{KI} \cap N_{KJ}}.
\eeqn

\item Each $(N_{JI}, E_{I;J})$ satisfies the {\bf (Tangent Bundle Condition)} of Definition \ref{defn720}.
\end{enumerate}
\end{defn}

\begin{rem}\label{rem736}
The above setting is slightly more general than what we need in our application in this paper and the companion \cite{Tian_Xu_4}. In this paper we will see the following situation in the concrete cases.
\begin{enumerate}

\item The index set ${\mc I}$ consists of certain nonempty subsets of a finite set $\{1, \ldots, m \}$, which has a natural partial order given by inclusions. 

\item For each $i \in I$, $\Gammait_i$ is a finite group and $\Gammait_I =  \mathit{\Pi}_{i\in I} \Gammait_i$. $U_I = \tilde U_I/ \Gammait_I$ where $\tilde U_I$ is a topological manifold acted by $\Gammait_I$. Moreover, ${\bm E}_1, \ldots, {\bm E}_m$ are vector spaces acted by $\Gammait_i$ and the orbifold bundle $E_I \to U_I$ is the quotient 
\beqn
E_I:= (\tilde U_I \times {\bm E}_I ) / \Gammait_I,\ {\rm where}\ {\bm E}_I:= \bigoplus_{i \in I} {\bm E}_i.
\eeqn

\item For $I \preq J$, $U_{JI} = \tilde U_{JI} / \Gammait_I$ where $\tilde U_{JI} \subset \tilde U_I$ is a $\Gammait_I$-invariant open subset and the coordinate change is induced from the following diagram
\beq\label{eqn79}
\vcenter{ \xymatrix{ \tilde V_{JI} \ar[r] \ar[d] & \tilde U_J \\
                     \tilde U_{JI}              & }   }
\eeq
Here $\tilde V_{JI} \to \tilde U_{JI}$ is a covering space with group of deck transformations identical to $\Gammait_{J-I} = \mathit{\Pi}_{j \in J - I} \Gammait_j$; then $\Gammait_J$ acts on $\tilde V_{JI}$ and $\tilde V_{JI} \to \tilde U_J$ is a $\Gammait_J$-equivariant embedding of manifolds, which induces an orbifold embedding $U_{JI} \to U_J$ and an orbibundle embedding $E_I|_{U_{JI}} \to E_J$. 

\end{enumerate}

In this situation, one naturally has subbundles $E_{I; J} \subset E_J$ for all pairs $I \preq J$. Hence a thickening of such a good coordinate system is essentially only a collection of neighborhoods $N_{JI}$ of $\phi_{JI}(U_{JI})$ which satisfy \eqref{eqn78} and 
\beqn
S_J^{-1}(E_{I;J}) \cap N_{JI} = \phi_{JI}(U_{JI}).
\eeqn
\end{rem}

\begin{thm}\label{thm737}
Let $X$ be a compact Hausdorff space and have a good coordinate system 
\beqn
{\mf A} = \Big( \big\{ C_I = (U_I, E_I, S_I, \psi_I, F_I) \ |\ I \in {\mc I} \big\},\ \big\{ T_{JI}  \ |\ I \preq J \big\} \Big).
\eeqn
Let ${\mf A}' \sqsubset {\mf A}$ be any precompact shrinking. Let ${\mc N} = \{(N_{JI}, E_{I;J})\}$ be a thickening of ${\mf A}$ and $N_{JI}'\subset U_J'$ be a collection of open neighborhoods of $\phi_{JI}(U_{JI}')$ such that $\ov{N_{JI}'} \subset N_{JI}$. Then they induce a thickening ${\mc N}'$ of ${\mf A}'$ by restriction. Let $d_I: U_I \times U_I \to [0, +\infty)$ be a distance function on $U_I$ which induces the same topology as $U_I$. Let $\epsilon>0$ be a constant. Then there exist a collection of multisections $t_I: U_I \overset{m}{\to} E_I$ satisfying the following conditions.
\begin{enumerate}

\item For each $I\in {\mc I}$, $\tilde s_I:= S_I + t_I$ is transverse.

\item For each $I \in {\mc I}$, 
\beq\label{eqn710}
d_I \big( \tilde s_I^{-1}(0) \cap \ov{U_I'}, S_I^{-1}(0) \cap \ov{U_I'} \big) \leq \epsilon.
\eeq

\item For each pair $I \preq J$, we have 
\beqn
\wh \phi_{JI} \circ t_I|_{U_{JI}'} = t_J \circ \phi_{JI}|_{U_{JI}'}.
\eeqn
Hence the collection of restrictions $t_I':= t_I|_{U_I'}$ defines a perturbation ${\mf t}'$ of ${\mf A}'$. 

\item ${\mf t}'$ is ${\mc N}'$-normal. 

\end{enumerate}
\end{thm}

\begin{proof}
To simplify the proof, we assume that we are in the situation described by Remark \ref{rem736}. The general case requires minor modifications in a few places. Then the subbundles $E_{I_b, I_a}$ are naturally define over $U_{I_b}$. 

We use the inductive construction. Order the set ${\mc I}$ as $I_1, \ldots, I_m$ such that 
\beqn
I_k \preq I_l \Longrightarrow k \leq l.
\eeqn
For each $k, l$ with $k < l$, define open sets $N_{I_l, k}^-$ by 
\beqn
N_{I_l, k}^- = \bigcup_{a \leq k, I_a \prec I_l} N_{I_l I_a}.
\eeqn
Define open sets $N_{I_l, k}^+$ inductively. First $N_{I_m,k}^+ =\emptyset$. Then 
\begin{align*}
&\ N_{I_l, k}^+ := \bigcup_{I_l \prec I_b } \phi_{I_b I_l}^{-1}(N_{I_b, k}),\ &\ N_{I_l, k} = N_{I_l,k}^- \cup N_{I_l, k}^+.
\end{align*}
Replacing $N_{JI}$ by $N_{JI}'$ in the above definitions, we obtain $N_{I_l, k}'\subset U_{I_l}'$ with 
\beqn
\ov{N_{I_l, k}'} \subset N_{I_l, k}.
\eeqn
If $k \geq l$, define 
\beqn
N_{I_l, k} = U_{I_l},\ N_{I_l, k}' = U_{I_l}'.
\eeqn

On the other hand, it is not hard to inductively choose a system of continuous norms on $E_I$ such that, for all pairs $I_a \preq I_b$, the bundle embedding $\wh\phi_{I_b I_a}$ is isometric. Given such a collection of norms, choose $\delta>0$ such that for all $I$, 
\beq\label{eqn711}
d_I\big( x, S_I^{-1}(0) \cap \ov{U_I'} \big) > \epsilon,\ x \in \ov{U_I'} \Longrightarrow \| S_I(x)\| \geq (2m+1) \delta.
\eeq

Now we reformulate the problem in an inductive fashion. We would like to verify the following induction hypothesis.

\vspace{0.2cm}

\noindent {\it Induction Hypothesis.} For $a, k = 1, \ldots, m$, there exists an open subset $O_{I_a, k} \subset N_{I_a, k}$ which contains $\ov{N_{I_a, k}'}$ and multisections
\beqn
t_{I_a, k}: O_{I_a, k} \overset{m}{\to} E_{I_a}.
\eeqn
They satisfy the following conditions. 

\begin{enumerate}
\item For all pairs $I_a \preq I_b$, over a neighborhood of the compact subset
\beqn
\ov{N_{I_a,k}'} \cap \phi_{I_b I_a}^{-1}( \ov{N_{I_b, k}'}) \subset \ov{U_{I_b I_a}'} \subset U_{I_b I_a}
\eeqn
one has
\beq\label{eqn712}
t_{I_b, k} \circ \phi_{I_b I_a} = \wh\phi_{I_b I_a} \circ t_{I_a, k}.
\eeq

\item In a neighborhood of $\ov{N_{I_b I_a}'}$, the value of $t_{I_b, k}$ is contained in the subbundle $E_{I_a; I_b}$. 

\item $S_{I_a} + t_{I_a, k}$ is transverse.

\item $\| t_{I_a, k}\|_{C^0} \leq 2 k \delta$. 
\end{enumerate}
\vspace{0.2cm}

It is easy to see that the $k = m$ case implies this theorem. Indeed, \eqref{eqn710} follows from \eqref{eqn711} and the bound $\| t_{I_l, m}\| \leq m \delta$. Now we verify the conditions of the induction hypothesis. For the base case, apply Lemma \ref{lemma717} to 
\beqn
M = U_{I_1},\ \ C = \emptyset,\ \ D = U_{I_1},
\eeqn
we can construct a multisection $t_{I_1, 1}: U_{I_1} \overset{m}{\to} E_{I_1}$ making $S_{I_1} + t_{I_1}$ transverse with $\| t_{I_1}\| \leq  \delta$. Now we construct $t_{I_a, 1}$ for $a = 2, \ldots, m$ via a backward induction. Define 
\beqn
O_{I_a, 1} = N_{I_a, 1} = N_{I_a I_1} \cup \bigcup_{I_a \prec I_b} \phi_{I_b I_a}^{-1}(N_{I_b, 1}),\ a = 1, \ldots, m.
\eeqn
Then \eqref{eqn712} determines the value of $t_{I_m, 1}$ over the set
\beqn
\phi_{I_m I_1}(U_{I_m I_1}) \subset N_{I_m, 1} = N_{I_m I_1}.
\eeqn
It is a closed subset of $N_{I_m, 1}$, hence one can extend it to a continuous section of $E_{I_1; I_m}$ which can be made satisfy the bound 
\beqn
\| t_{I_m, 1}\|\leq  \left( 1 + \frac{1}{m} \right) \delta.
\eeqn
Suppose we have constructed $t_{I_a, 1}: O_{I_a, 1} \overset{m}{\to} E_{I_a}$ for all $a \geq l+1$ such that together with $t_{I_1, 1}$ they satisfy the induction hypothesis for $k = 1$ with the bound
\beqn
\| t_{I_a, 1}\| \leq \left( 1 + \frac{m-l}{m} \right) \delta.
\eeqn
Then we construct $t_{I_l, 1}: O_{I_l, 1} \overset{m}{\to } E_{I_l}$ as follows. Given
\beqn
z_{I_l} \in \phi_{I_l I_1}(U_{I_l I_1}) \cup N_{I_l, 1}^+ = \phi_{I_l I_1}(U_{I_l I_1}) \cup  \bigcup_{I_l \prec I_b} \phi_{I_b I_l}^{-1}(N_{I_b, 1}), 
\eeqn
if $z_{I_l}$ is in the first component, then define $t_{I_l, 1}(z_{I_l})$ by the formula \eqref{eqn712} for $b=l$, $a=1$. If $z_{I_l} \in N_{I_b I_l} \cap \phi_{I_b I_l}^{-1}(N_{I_b, 1})$ for some $b$, then define
\beqn
t_{I_l, 1}(z_{I_l}) = \wh\phi_{I_b I_l}^{-1}( t_{I_b, 1}(\phi_{I_b I_l}(z_{I_l}))).
\eeqn
It is easy to verify using the {\bf (Cocycle Condition)} that these definitions agree over some closed neighborhood of 
\beqn
\phi_{I_l I_1}( \ov{U_{I_l I_1}'}) \cup \bigcup_{I_l \prec I_b} \phi_{I_b I_l}^{-1}( \ov{N_{I_b I_l}'}).
\eeqn
Then one can extend it to a continuous multisection of $E_{I_1; I_l}$ satisfying the bound
\beqn
\| t_{I_l, 1} \| \leq \left( 1 + \frac{m-l+1}{m} \right) \delta.
\eeqn
Then the induction can be carried on and stops until $l = 2$, for which one has the bound
\beqn
\| t_{I_2, 1} \| \leq \left( 1 + \frac{m-1}{m} \right) \delta \leq 2 \delta.
\eeqn
The transversality of $S_{I_a} + t_{I_a, 1}$ for $a \geq 2$ follows from the fact that $t_{I_a, 1}$ takes value in $E_{I_1; I_a}$, the fact that $S_{I_a}|_{N_{I_a I_1}}$ intersects with $E_{I_1; I_a}$ transversely along $\phi_{I_a I_1}(U_{I_a I_1})$, and the fact that $S_{I_1} + t_{I_1, 1}$ is transverse. Hence we have verified the $k=1$ case of the induction hypothesis.

Suppose we have verified the induction hypothesis for $k-1$. For all $a \leq k-1$, define
\begin{align*}
&\ O_{I_a, k} = O_{I_a, k-1},\ &\ t_{I_a, k} = t_{I_a, k-1}.
\end{align*}
The induction hypothesis implies that we have a multisection 
\beqn
t_{I_k, k-1}: O_{I_k, k-1}\overset{m}{\to} E_{I_k}
\eeqn
such that $S_{I_k} + t_{I_k, k-1}$ is transverse and $\| t_{I_k, k-1}\| \leq (2k-2)\delta$. Then apply Lemma \ref{lemma717}, one can obtain a multisection $t_{I_k, k}$ defined over a neighborhood of $\ov{U_{I_k}'}  = \ov{N_{I_k, k}'}$ contained in $U_{I_k} = N_{I_k, k}$ such that $S_{I_k} + t_{I_k, k}$ is transverse, $\| t_{I_k, k}\| \leq (2k-1) \delta$, and $t_{I_k, k} = t_{I_k, k-1}$ over a neighborhood of $\ov{N_{I_k, k-1}'}$ which is smaller than $O_{I_k,k-1}$. Then by the similar backward induction as before, using the extension property of continuous multi-valued functions, one can construct perturbations with desired properties. The remaining details are left to the reader.
\end{proof}

In our argument, condition \eqref{eqn710} is crucial in establishing the compactness of the perturbed zero locus. In the situation of Theorem \ref{thm737}, suppose a perturbation ${\mf t}'$ is constructed over the shrinking ${\mf A}'\sqsubset {\mf A}''$. Then for a further precompact shrinking ${\mf A}'' \sqsubset {\mf A}'$, \eqref{eqn710} remains true (with $\ov{U_I'}$ replaced by $\ov{U_I''}$).

\begin{prop}\label{prop738}
Let ${\mf A}$ be a good coordinate system on $X$ and let ${\mf A}' \sqsubset {\mf A}$ be a precompact shrinking. Let ${\mc N}$ be a thickening of ${\mf A}$. Equip each chart $U_I$ a distance function $d_I$ which induces the same topology. Then there exists $\epsilon>0$ satisfying the following conditions. Let ${\mf t}$ be a multi-valued perturbation of ${\mf s}$ which is ${\mc N}$-normal. Suppose
\beq\label{eqn713}
d_I \big( \tilde s_I^{-1}(0) \cap \ov{U_I'}, S_I^{-1}(0)\cap \ov{U_I'} \big) \leq \epsilon,\ \forall I \in {\mc I}.
\eeq
Then the zero locus $\| (\tilde {\mf s}')^{-1} (0)\|$ is sequentially compact with respect to the subspace topology induced from $|{\mf A}'|$.
\end{prop}

\begin{proof}
Now for each $x \in |{\mf A}'|$, define $I_x \in {\mc I}$ to be the minimal element for which $x$ can be represented by a point $\tilde x \in U_{I_x}'$. 

\vspace{0.2cm}

\noindent {\it Claim.} Given $I$, there exists $\epsilon>0$ such that, for any perturbations $\tilde {\mf s}$ that satisfy \eqref{eqn713}, if $x_i \in \| (\tilde {\mf s}')^{-1}(0)\|$ with $I_{x_i} = I$ for all $i$, then $x_i$ has a convergent subsequence.

\vspace{0.2cm}

\noindent {\it Proof of the claim.} Suppose this is not true, then there exist a sequence $\epsilon_k >0$ that converge to zero, and a sequence of multi-valued perturbations ${\mf t}_k$ satisfying 
\beq\label{eqn714}
d_I \big( \tilde s_{k, J}^{-1}(0) \cap \ov{U_J'}, S_J^{-1}(0) \cap \ov{U_J'} \big) \leq \epsilon_k,\ \forall J \in {\mc I}
\eeq
and sequences of points $\tilde x_{k, i} \in \tilde s_{k,I}^{-1}(0) \cap U_I'$ such that the sequence $\{ x_{k, i} = \pi_{\mf A}(\tilde x_{k, i}) \}_{i=1}^\infty$ does not have a convergent subsequence. Since $\tilde x_{k, i} \in \ov{U_I'}$ which is a compact subset of $U_I$, for all $k$ the sequence $\tilde x_{k, i}$ has subsequential limits, denoted by $\tilde x_k \in \tilde s_{k, I}^{-1}(0) \cap \ov{U_I'}$. Then since $\epsilon_k \to 0$, the sequence $\tilde x_k$ has a subsequential limit $\tilde x_\infty \in \ov{U_I'} \cap S_I^{-1}(0)$. Denote 
\beqn
x_\infty = \psi_I( \tilde x_\infty) \in \psi_I ( \ov{U_I'} \cap S_I^{-1}(0)) =  \ov{F_I'} \subset F_I \subset X.
\eeqn
Since all $F_I'$ cover $X$, there exists $J \in {\mc I}$ such that $x_\infty \in F_J$. Then by the {\bf (Overlapping Condition)} of the atlas ${\mf A}'$, we have either $I \preq J$ or $J \preq I$ but $J \neq I$. We claim that the latter is impossible. Indeed, if $x_\infty = \psi_J (\tilde y_\infty)$ with $\tilde y_\infty \in U_J'$, then we have $\tilde y_\infty \in U_{IJ}'$ and $\tilde x_\infty \in \varphi_{IJ}'(U_{IJ}')$. Then for $k$ sufficiently large, we have $\tilde x_k$ in $N_{JI}$. Fix such a large $k$, then for $i$ sufficiently large, we have $\tilde x_{k, i} \in N_{JI}$. However, since the perturbation ${\mf t}$ is ${\mc N}$-normal, it follows that $\tilde x_{k, i} \in \varphi_{IJ}'(U_{IJ}')$. This contradicts the assumption that $I_{x_{k, i}} = I$. Therefore $I \preq J$. Then there is a unique $\tilde y_\infty \in U_J' \cap S_J^{-1}(0)$ such that $\psi_J'( \tilde y_\infty) = x_\infty$ and $\tilde y_\infty = \varphi_{JI} ( \tilde x_\infty)$. Then $\tilde x_\infty \in U_{JI}$. Therefore, for $k$ sufficiently large, we have $\tilde x_k \in U_{JI}$ and we have the convergence
\beqn
\tilde y_k:= \varphi_{JI}(\tilde x_k) \to \tilde y_\infty
\eeqn
since $\varphi_{JI}$ is continuous. Since $\tilde y_\infty \in U_J'$ which is an open subset of $U_J$, for $k$ sufficiently large, we have $\tilde y_k \in U_J'$. Fix such a large $k$. Then for $i$ sufficiently large, we have
\beqn
\tilde x_{k, i} \in U_{JI} \cap \varphi_{JI}^{-1}(U_J') \cap U_I'.
\eeqn
Hence we have $\tilde y_{k, i}:= \varphi_{JI}'(\tilde x_{k, i}) \in U_J'$ and 
\beqn
\lim_{i \to \infty} \tilde y_{k, i} = \tilde y_k.
\eeqn
Since the map $U_J' \to |{\mf A}'|$ is continuous, we have the convergence 
\beqn
\lim_{i \to \infty} x_{k, i} = \lim_{i \to \infty} \pi_{{\mf A}'} ( \tilde y_{k, i}) = \pi_{{\mf A}'} (\tilde y_\infty).  
\eeqn
This contradicts the assumption that $x_{k, i}$ does not converge for all $k$. Hence the claim is prove. \hfill {\it End of the proof of the claim.}

\vspace{0.2cm}

Now for all $I \in {\mc I}$, choose the smallest $\epsilon$ such that the condition of the above claim hold. We claim this $\epsilon$ satisfies the condition of this proposition. Indeed, let ${\mf t}$ be such a perturbation and let $x_k$ be a sequence of points in $\| (\tilde s')^{-1}(0)\|$. Then there exists an $I \in {\mc I}$ and a subsequence (still indexed by $k$) with $I_{x_k} = I$. Then by the above claim, $x_k$ has a subsequential limit. Therefore $\| (\tilde s')^{-1}(0)\|$ is sequentially compact. 
\end{proof}

\subsection{The virtual cardinality and invariance}

In the application of the virtual technique in this paper, we only consider two situations: 1) the index of the problem is zero; 2) the index of the problem is zero or one and the charts are all manifolds. Then the discussion of the topology of the perturbed zero locus is very simple. In the general situation, for a transverse multi-valued perturbation, the perturbed zero locus has the structure of a {\it weighted branched manifold}. 

Indeed, let ${\mf A}$ be a good coordinate system of virtual dimension zero and let ${\mf t}$ be a transverse multi-valued perturbation for which the perturbed zero locus is sequentially compact. Let $|x| \in |{\mf A}|$ be a zero. Then over each chart $C_I$ which contains a representative $x_I\in U_I$ of $|x|$, we have a local representative
\beqn
\tilde s_I(x) = [ s_I^1(x), \ldots, s_I^l(x)].
\eeqn
The orientation of the atlas provides a number $\epsilon_i \in \{1, -1, 0\}$, such that if $s_I^i(x_I) \neq 0$, then $\epsilon_i = 0$. Then define the multiplicity of this zero $x_I$ by 
\beqn
m (|x|) = \frac{1}{l} \sum_{i=1}^l \epsilon_i.
\eeqn
One can verify that $m(x_I)$ is independent of the local representative, and is independent of the representative $x_I$ of $|x|$. Then we define the {\it virtual cardinality} of ${\mf A}$ by
\beqn
\# {\mf A} = \sum_{|x|\in |{\mc Z}_{\mf t}|} m(|x|) \in {\mb Q}.
\eeqn
The compactness of $|{\mc Z}_{\mf t}|$ implies that this sum is finite and hence $\#{\mf A}$ is a finite number. Moreover, if the charts are all manifolds, then transversality can be achieved by single-valued perturbations. Therefore, in that case, the virtual cardinality is an integer. 

Lastly we expect that the virtual cardinality is independent of various choices. This requires to consider virtual dimension one case, and we assume that the charts are all manifolds. Consider a good coordinate system ${\mf A}$ of virtual dimension one with boundary on a space $X$. Then there is a natural closed subset $\partial X \subset X$ and a boundary atlas $\partial {\mf A}$ on $\partial X$ which has virtual dimension zero without boundary. If ${\mf A}$ is oriented, then $\partial {\mf A}$ has an induced orientation. 

\begin{prop}
Let ${\mf A}$ be an oriented good coordinate system of dimension 1 with boundary on a space $X$. Suppose all charts of ${\mf A}$ are manifolds. Then $\# (\partial {\mf A}) = 0$. 
\end{prop}

\begin{proof}
Over $\partial {\mf A}$ one can find a transverse single-valued perturbation whose perturbed zero locus is a compact zero-dimensional manifold. The counting with signs of the zeroes is equal to $\# (\partial {\mf A})$. Moreover, the topological transversality theorem (Theorem \ref{thm77}) allows us to extend the perturbation to a single-valued transverse perturbation on ${\mf A}$, whose perturbed zero locus is still compact. Then the perturbed zero locus of ${\mf A}$ is a compact oriented one-dimensional manifold whose boundary is the perturbed zero locus of $\partial {\mf A}$, with the induced orientation. Hence $\# (\partial {\mf A}) = 0$. 
\end{proof}

The general method to prove the virtual cardinality is independent of choices is to construct a good coordinate system on the product $X \times [0, 1]$, such that two different systems of choices can be made homotopic to each other. Then the above proposition implies the independence.

\bibliography{mathref}

\bibliographystyle{amsalpha}

\end{document}